\documentclass[10pt]{amsart}
\usepackage[margin=1in]{geometry}
\usepackage{amsmath, amsthm, amssymb, color, url}
\usepackage{diagrams}
\usepackage{lscape}
\usepackage{hyperref}
\hypersetup{
    colorlinks,
    citecolor=blue,
    filecolor=black,
    linkcolor=black,
    urlcolor=black
}
\usepackage[disable]{todonotes}
\usepackage[final]{showkeys}

\renewcommand{\oe}{\overline{e}}
\newcommand{\of}{\overline{f}}

\newcommand{\CC}{\mathbb{C}}
\newcommand{\RR}{\mathbb{R}}

\newcommand{\Vect}{\text{Vect}}
\newcommand{\gl}{\mathfrak{gl}}
\newcommand{\oL}{\overline{L}}
\newcommand{\uu}{\mathfrak{u}}

\newcommand{\OO}{\mathcal{O}}
\newcommand{\Tr}{\text{Tr}}
\newcommand{\Sym}{\text{Sym}}
\newcommand{\dr}{\text{dr}}
\newcommand{\eps}{\varepsilon}
\newcommand{\sym}{\text{sym}}
\newcommand{\diag}{\text{diag}}
\newcommand{\wtilde}{\widetilde}

\newcommand{\GT}{\mathrm{GT}}
\newcommand{\trig}{\text{trig}}

\newcommand{\Ad}{\mathrm{Ad}}
\newcommand{\FF}{\mathcal{F}}
\newcommand{\Res}{\text{Res}}
\newcommand{\Vol}{\text{Vol}}
\newcommand{\ad}{\text{ad}}
\newcommand{\End}{\text{End}}
\renewcommand{\tt}{\mathfrak{t}}
\renewcommand{\sl}{\mathfrak{sl}}

\newcommand{\pp}{\mathfrak{p}}
\newcommand{\gt}{\text{gt}}
\newcommand{\bL}{\overline{L}}
\newcommand{\BB}{\mathcal{B}}
\newcommand{\BG}{\mathfrak{B}}
\newcommand{\wU}{\widetilde{U}}

\newcommand{\ZZ}{\mathbb{Z}}
\newcommand{\height}{\text{ht}}

\newcommand{\bb}{\mathfrak{b}}
\newcommand{\ev}{\text{ev}}
\newcommand{\cP}{\mathcal{P}}
\newcommand{\cR}{\mathcal{R}}
\newcommand{\oLt}{\oL^\trig}

\theoremstyle{definition}

\newtheorem{theorem}{Theorem}[section]

\newtheorem{corr}[theorem]{Corollary}
\newtheorem{lemma}[theorem]{Lemma}
\newtheorem{prop}[theorem]{Proposition}
\newtheorem*{remark}{Remark}

\numberwithin{equation}{section}

\begin{document}

\title{A new integral formula for Heckman-Opdam hypergeometric functions}
\author{Yi Sun}
\date{\today}
\email{yisun@math.mit.edu}

\begin{abstract}
We provide Harish-Chandra type formulas for the multivariate Bessel functions and Heckman-Opdam hypergeometric functions as representation-valued integrals over dressing orbits.  Our expression is the quasi-classical limit of the realization of Macdonald polynomials as traces of intertwiners of quantum groups given by Etingof-Kirillov~Jr. in \cite{EK}. Integration over the Liouville tori of the Gelfand-Tsetlin integrable system and adjunction for higher Calogero-Moser Hamiltonians recovers and gives a new proof of the integral realization over Gelfand-Tsetlin polytopes which appeared in the recent work \cite{BG} of Borodin-Gorin on the $\beta$-Jacobi corners ensemble.
\end{abstract}
\maketitle

\tableofcontents

\makeatletter
\providecommand\@dotsep{5}
\makeatother

\section{Introduction}

The Heckman-Opdam hypergeometric functions are a family of real-analytic symmetric functions introduced by Heckman-Opdam in \cite{HO, He, Opd, Opd2} as joint eigenfunctions of the trigonometric Calogero-Moser integrable system.  The latter is a quasi-classical limit of the Macdonald-Ruijsenaars integrable system, and in \cite{BG}, Borodin-Gorin realized the Heckman-Opdam hypergeometric function as a limit of the Macdonald polynomials under the quasi-classical scaling.  By applying their limit transition to Macdonald's branching rule, they obtained a new formula for the Heckman-Opdam hypergeometric functions as an integral over Gelfand-Tsetlin polytopes.

The purpose of the present work is to provide new Harish-Chandra type integral formulas for the Heckman-Opdam hypergeometric functions as representation-valued integrals over dressing orbits of $U_N$.  Our formulas are the quasi-classical limits of the expression given by Etingof-Kirillov~Jr. in \cite{EK} for Macdonald polynomials as representation-valued traces of $U_q(\gl_N)$-intertwiners.  In this limit, traces over irreducible representations become integrals with respect to Liouville measure on the corresponding dressing orbit.

Integrating our formulas over Liouville tori of the Gelfand-Tsetlin integrable system yields an expression for Heckman-Opdam hypergeometric functions as an integral of $U_N$-matrix elements over the Gelfand-Tsetlin polytope.  We identify these matrix elements as an application of higher Calogero-Moser Hamiltonians to an explicit kernel.  Taking adjoints of these Hamiltonians recovers and gives a new proof of the formula of \cite{BG}.  Our techniques involve a relation between spherical parts of rational Cherednik algebras of different rank which is of independent interest.

In the remainder of the introduction, we summarize our motivations, give precise statements of our results, and explain how they relate to other recent work.

\subsection{Heckman-Opdam hypergeometric functions}

Fix a complex number $k$ and a positive integer $N$.  The rational and trigonometric Calogero-Moser integrable systems in the variables $\{\lambda_i\}_{1 \leq i \leq N}$ are the quantum integrable systems with quadratic Hamiltonians
\begin{align*}
L_{p_2}(k) &= \sum_i \partial_i^2 + 2k (1 - k) \sum_{i < j} \frac{1}{(\lambda_i - \lambda_j)^2} \text{ and} \\
L_{p_2}^\trig(k) &= \sum_i \partial_i^2 + k (1 - k) \sum_{i < j} \frac{1}{2\sinh^2\left(\frac{\lambda_i - \lambda_j}{2}\right)}.
\end{align*}
They are completely integrable systems, meaning that $L_{p_2}(k)$ and $L_{p_2}^\text{trig}(k)$ fit into families $L_p(k)$ and $L_p^\text{trig}(k)$ of commuting Hamiltonians defined for each symmetric polynomial $p$. Define conjugated versions of these Hamiltonians by 
\begin{align} \label{eq:conj-cm-ham-def}
\overline{L}_p(k) &= \Delta(\lambda)^{-k} \circ L_p(k) \circ \Delta(\lambda)^{k}\\
\overline{L}_p^\trig(k) &= e^{\frac{(N - 1)k}{2} \sum_i \lambda_i}\Delta(e^\lambda)^{-k} \circ L_p^\trig(k) \circ e^{-\frac{(N - 1)k}{2} \sum_i \lambda_i} \Delta(e^\lambda)^{k}, \label{eq:conj-tcm-ham-def}
\end{align}
where for a set of variables $x$, we denote by $\Delta(x)$ the Vandermonde determinant $\Delta(x) = \prod_{i < j} (x_i - x_j)$.  For each $s = (s_1, \ldots, s_N)$, the hypergeometric system corresponding to $s$ was introduced in \cite{HO, He, Opd, Opd2} as
\begin{equation} \label{eq:hyper}
\bL_p^{\text{trig}}(k) \FF_k(\lambda, s) = p(s) \FF_k(\lambda, s).
\end{equation}
Let $\rho$ be the weight $\rho = (\frac{N - 1}{2}, \ldots, \frac{1 - N}{2})$.  The following characterization was given of certain joint eigenfunctions of this system known as Heckman-Opdam hypergeometric functions.

\begin{theorem}[\cite{HS, Opd3}] \label{thm:ho-uniq}
The hypergeometric system (\ref{eq:hyper}) has a unique symmetric real-analytic solution $\FF_k(\lambda, s)$ normalized so that the leading term of its series expansion in $\lambda$ is
\[
\frac{\Gamma(Nk) \cdots \Gamma(k)}{\Gamma(k)^N} \prod_{i < j} \prod_{a = 0}^{k-1} (s_i - s_j + a)^{-1} e^{(\lambda, s - k \rho)}.
\]
This $\FF_k(\lambda, s)$ extends to a holomorphic function of $\lambda$ on a symmetric tubular neighborhood of $\RR^n \subset \CC^n$. 
\end{theorem}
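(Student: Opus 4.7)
The plan is to follow the classical Harish-Chandra/Opdam strategy of constructing $\FF_k(\lambda,s)$ as a Weyl-symmetrization of an asymptotic series solution and then showing the symmetrization eliminates the singularities on walls.

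First I would look for a formal solution to (\ref{eq:hyper}) of the form
\[
\Phi_k(\lambda, s) = e^{(\lambda, s - k\rho)} \sum_{\mu \in Q_+} a_\mu(s) e^{-(\mu, \lambda)},
\]
where $Q_+$ denotes the cone spanned by the positive roots $e_i - e_j$ for $i < j$, normalized by $a_0(s) = 1$. Expanding $\frac{1}{2\sinh^2((\lambda_i - \lambda_j)/2)}$ as a geometric series in $e^{-(\lambda_i - \lambda_j)}$ and substituting into (\ref{eq:hyper}) with $p = p_2$, I would read off a recursion of the shape $\bigl[\,p_2(s - k\rho - \mu) - p_2(s - k\rho)\,\bigr] a_\mu(s) = (\text{linear in earlier } a_\nu)$. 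The scalar prefactor vanishes precisely when $s$ is such that $s - k\rho - \mu$ is Weyl-conjugate to $s - k\rho$, so on the complement of this locus the recursion determines each $a_\mu(s)$ uniquely as a rational function of $s$; this gives existence and uniqueness of the formal series with the prescribed leading term. A standard majorant argument, comparing against the spherical function on a symmetric space, shows absolute convergence on the region $\{\Re(\lambda_i - \lambda_{i+1}) > 0\}$, yielding a holomorphic solution $\Phi_k(\lambda,s)$ there.

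Next I would symmetrize. The recipe is to form
\[
\FF_k(\lambda, s) = \sum_{w \in S_N} c(ws;k)\, \Phi_k(\lambda, ws),
\]
with Harish-Chandra $c$-function $c(s;k) = \prod_{i < j} \frac{\Gamma(s_i - s_j)}{\Gamma(s_i - s_j + k)}$ (up to a normalization constant matching the leading coefficient in the theorem). Each $\Phi_k(\lambda, ws)$ solves the conjugated hypergeometric system because the $\bL_p^\trig(k)$ are symmetric in $s$, and the coefficient $c(ws;k)$ is arranged so that the potential singularities of the individual series on the walls $s_i = s_j$ cancel. Verifying this cancellation is the first serious technical step and can be done by checking, on each wall, that the residues of neighboring Harish-Chandra series combine to zero; this uses only the functional equations of $\Gamma$ and the shape of the recursion.

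The hardest step, and the main obstacle, is extending $\FF_k(\lambda,s)$ holomorphically from the open chamber to a \emph{symmetric tubular neighborhood of $\RR^N$}. The individual $\Phi_k(\lambda, ws)$ genuinely have singularities on the walls $\lambda_i = \lambda_j$, so the extension is not formal. I would use the Heckman-Opdam \emph{shift operators} $G_{\pm}(k)$, which intertwine the trigonometric Calogero-Moser systems at parameters $k$ and $k \pm 1$ and lower/raise the order of singularity along the walls. Starting from $k = 0$, where $\FF_0(\lambda,s)$ is the elementary symmetric function $\frac{1}{N!}\sum_w e^{(\lambda, ws)}$ and manifestly entire, repeated application of $G_+(k)$ propagates holomorphicity on a tube around $\RR^N$ to positive integer $k$, and the residue/analytic-continuation arguments of Opdam \cite{Opd3} (together with the Knop-Sahi type polynomiality of $c$-function normalizations) promote this to arbitrary complex $k$. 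Uniqueness of $\FF_k(\lambda,s)$ then follows from uniqueness of the formal series expansion combined with real-analyticity: any other symmetric real-analytic solution with the same leading term must agree with $\FF_k(\lambda,s)$ on the open chamber, and hence everywhere by analytic continuation.
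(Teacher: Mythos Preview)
The paper does not give its own proof of this statement: Theorem~\ref{thm:ho-uniq} is quoted from the literature with the attribution \cite{HS, Opd3} and is used as a black box (most notably in Subsection~\ref{sec:ho-fn} to identify the explicit integral $\Phi_k(\lambda,s)$ with $\FF_k(\lambda,s)$). So there is no in-paper argument to compare against.

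That said, your sketch is a faithful outline of the classical Heckman--Opdam proof in those references: construct the Harish-Chandra series $\Phi_k(\lambda,s)$ on the open chamber via the recursion from $\oL_{p_2}^\trig(k)$, symmetrize with the $c$-function to get a $W$-invariant solution, and then use shift operators $G_\pm$ together with the trivial base case $k=0$ to propagate holomorphicity across the walls. Two small cautions. First, your normalization of the leading term has to be matched carefully to the one in the theorem; the product $\prod_{i<j}\prod_{a=0}^{k-1}(s_i-s_j+a)^{-1}$ is exactly $c(s;k)$ up to the Gamma prefactor, so the constant $\Gamma(Nk)\cdots\Gamma(k)/\Gamma(k)^N$ must be inserted by hand rather than emerging from the symmetrization. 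Second, the shift-operator step only directly handles integer shifts in $k$; the extension to general $k$ in \cite{Opd3} goes through a monodromy argument (regularity of the connection in $k$ and a Hartogs-type continuation), not through Knop--Sahi polynomiality as you suggest. With those adjustments your plan matches the cited proofs.
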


The corresponding rational degenerations are a family of symmetric real-analytic joint eigenfunctions $\BB_k(\lambda, s)$ of $\bL_p(k)$ satisfying
\begin{equation} \label{eq:bessel-ef}
\bL_p(k) \BB_k(\lambda, s) = p(s) \BB_k(\lambda, s)
\end{equation}
and normalized so that $\BB_k(\lambda, 0) = 1$.  They are known as multivariate Bessel functions and have been studied in \cite{Dun, Jeu, Opd4, OO, GK, FoRa}.

\subsection{Poisson-Lie group structure on $\uu_N$ and $U_N$}

The Lie algebra $\gl_N = \gl_N(\CC)$ has real Iwasawa decomposition $\gl_N = \uu_N \oplus \bb_N$ with $\bb_N \simeq \uu_N^*$.  Let $\tt_N \subset \uu_N$ be the Cartan subalgebra. We identify $\uu_N^*$ with $\pp_N$, the trivial Lie algebra of $N \times N$ Hermitian matrices by the map $x \mapsto \frac{1}{2}(x + x^*)$.  Equip $\pp_N$ with the Kirillov-Kostant-Souriau Poisson structure, and denote the coadjoint orbit of a diagonal matrix $\lambda \in \pp_N$ by $\OO_\lambda$.  We will use $\lambda$ interchangeably for the diagonal matrix and its sequence of diagonal entries.  Denote the symplectic form and Liouville measure on $\OO_\lambda$ by $\omega_\lambda$ and $d\mu_\lambda$, respectively, and let $\CC[\bb_N]$ be the corresponding Poisson algebra.

In the corresponding Iwasawa decomposition $GL_N = U_N B_N$ for the group, give $U_N$ the Lu-Weinstein Poisson-Lie structure (see \cite{LW}) so that $B_N$ is the dual Poisson-Lie group to $U_N$.  Let $T_N \subset U_N$ denote the diagonal torus.  Identify $B_N$ with the Poisson manifold $P_N^+$ of $N \times N$ positive definite Hermitian matrices via $\sym(b) = (b^*b)^{1/2}$ so that $\sym$ intertwines the dressing and conjugation actions of $U_N$ on $B_N$ and $P_N^+$.  For $\Lambda = e^\lambda \in P_N^+$, denote by $\OO_\Lambda$, $\omega_\Lambda$, and $d\mu_\Lambda$ the dressing orbit containing $\Lambda$, its symplectic form, and its Liouville measure.  Let $\CC[B_N]$ and $\CC[\OO_\Lambda]$ denote the corresponding Poisson algebras; these algebras possess a $\star$-structure given by complex conjugation on each matrix element.

\subsection{The main results}

Restrict now to the case of positive integer $k$.  Let $W_{k-1}$ denote the $U_N$-representation
\[
L_{((k - 1)(N - 1), - (k - 1), \ldots, -(k - 1))} = \Sym^{(k-1)N} \CC^N \otimes (\det)^{-(k - 1)},
\]
and choose an isomorphism $W_{k-1}[0] \simeq \CC \cdot w_{k-1}$ for some $w_{k-1} \in W_{k-1} [0]$ which spans the $1$-dimensional zero weight space $W_{k-1}[0]$. Let $f_{k - 1}: \OO_\lambda \to W_{k - 1}$ and $F_{k - 1}: \OO_\Lambda \to W_{k - 1}$ denote the unique $U_N$-equivariant maps such that $f_{k-1}(\lambda) = F_{k - 1}(\Lambda) = w_{k - 1}$.  Our main results are Theorems \ref{thm:rational} and \ref{thm:trig-integral}, which realize the multivariate Bessel functions and Heckman-Opdam hypergeometric functions as representation-valued integrals over coadjoint and dressing orbits under the identification of $W_{k - 1}[0] \simeq \CC \cdot w_{k - 1}$ with $\CC$.

{\renewcommand{\thetheorem}{\ref{thm:rational}}
\begin{theorem}
The multivariate Bessel function $\BB_k(\lambda, s)$ admits the integral representation
\[
\BB_k(\lambda, s) = \frac{\Gamma(Nk) \cdots \Gamma(k)}{ \Gamma(k)^{N}\prod_{i < j} (\lambda_i - \lambda_j)^k \prod_{i < j} (s_i - s_j)^{k - 1}} \int_{X \in \OO_\lambda} f_{k - 1}(X) e^{\sum_{l = 1}^N s_l X_{ll}} d\mu_\lambda.
\]
\end{theorem}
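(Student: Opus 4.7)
The plan is to verify that the right-hand side, call it $\tilde\BB_k(\lambda, s)$, is a symmetric real-analytic joint eigenfunction of $\{\bL_p(k)\}$ with eigenvalues $\{p(s)\}$ satisfying $\tilde\BB_k(\lambda, 0) = 1$, and then invoke the uniqueness characterization of the multivariate Bessel function (the rational analogue of Theorem \ref{thm:ho-uniq}) to conclude $\tilde\BB_k = \BB_k$.

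First I would check that the $W_{k-1}$-valued integral $I(\lambda, s) := \int_{\OO_\lambda} f_{k-1}(X) e^{\sum_l s_l X_{ll}}\, d\mu_\lambda(X)$ lands in $\CC \cdot w_{k-1}$. The substitution $X \mapsto tXt^{-1}$ for $t \in T_N$ preserves $d\mu_\lambda$ and $\sum_l s_l X_{ll}$, and by $T_N$-equivariance of $f_{k-1}$ it pulls a factor of $t$ out of the integrand; hence $I(\lambda, s) \in W_{k-1}^{T_N} = W_{k-1}[0]$. Writing $I(\lambda, s) = \tilde I(\lambda, s)\cdot w_{k-1}$, the analogous substitution by a permutation matrix $\sigma \in N(T_N)$ yields $\tilde I(\lambda, \sigma s) = \mathrm{sign}(\sigma)^{k-1}\tilde I(\lambda, s)$, where the sign arises from the $S_N$-action on the zero-weight vector of $\Sym^{(k-1)N}\CC^N \otimes (\det)^{-(k-1)}$. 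The prefactor $\Delta(s)^{k-1}$ transforms by the same character under $s \mapsto \sigma s$, so the quotient $\tilde\BB_k(\lambda, s)$ is symmetric in $s$; real-analyticity is clear from the integral representation.

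The main step is the eigenvalue equation. For each symmetric polynomial $p$ there is a $U_N$-invariant constant-coefficient differential operator $D_p$ on $\pp_N$ with $D_p[e^{\Tr(sX)}] = p(s) e^{\Tr(sX)}$: for $p = p_2$, $D_p$ is the Euclidean Laplacian on $\pp_N$, and for higher $p$ one uses corresponding higher-order $U_N$-invariant differential operators, identified via the Harish-Chandra isomorphism for $\pp_N$. The key technical claim is a radial-part statement: when $D_p$ is applied to the $W_{k-1}$-valued function $X \mapsto f_{k-1}(X) e^{\Tr(sX)}$, integrated against $d\mu_\lambda$, and projected onto $w_{k-1}$, the outcome equals $\bL_p(k)$ applied to $\tilde I(\lambda, s)$ in the variable $\lambda$. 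Equivalently, on $U_N$-equivariant functions $\pp_N \to W_{k-1}$ projected onto $\CC \cdot w_{k-1}$, the radial part of $D_p$ in the eigenvalue coordinates is exactly $\bL_p(k) = \Delta(\lambda)^{-k} L_p(k) \Delta(\lambda)^k$. For $k = 1$ this reduces to the classical Harish-Chandra radial part theorem (with $W_0$ trivial); for general $k$ the representation $W_{k-1}$ encodes the shift of the Calogero-Moser coupling constant, and I would prove this either by a direct computation in eigenvalue coordinates on $\pp_N = U_N \cdot \tt_N$ (matching the Laplace-Beltrami terms against the explicit $\bL_p(k)$ formula) or via the relation to the spherical subalgebra of the rational Cherednik algebra mentioned in the introduction. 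Granting this, applying $D_p$ under the integral sign and invoking $D_p e^{\Tr(sX)} = p(s) e^{\Tr(sX)}$ produces $\bL_p(k)\tilde\BB_k(\lambda, s) = p(s)\tilde\BB_k(\lambda, s)$.

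Finally, the normalization $\tilde\BB_k(\lambda, 0) = 1$ is verified by Taylor-expanding $e^{\sum_l s_l X_{ll}}$ around $s = 0$ and extracting the lowest nonvanishing term of $\tilde I(\lambda, s)$; by the Weyl antisymmetry this term has degree exactly $\binom{N}{2}(k-1)$ in $s$, so $\lim_{s \to 0} \tilde I(\lambda, s)/\Delta(s)^{k-1}$ is an explicit polynomial integral of $U_N$-matrix coefficients over $\OO_\lambda$ that evaluates, after applying standard Selberg/Mehta-type $\Gamma$-identities, to $\Delta(\lambda)^k\cdot \Gamma(k)^{N}/(\Gamma(Nk)\cdots \Gamma(k))$, exactly canceling the prefactor. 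The main obstacle throughout is the radial-part identity of the preceding paragraph, which is the representation-theoretic heart of the formula and the step where the particular choice $W_{k-1} = \Sym^{(k-1)N}\CC^N \otimes (\det)^{-(k-1)}$ plays an essential role.
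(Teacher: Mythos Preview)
Your approach is valid in principle but genuinely different from the paper's. The paper does \emph{not} verify the eigenvalue equation directly via a radial-part computation. Instead, it proceeds inductively on $N$ by integrating over the Liouville tori of the Gelfand-Tsetlin system, which reduces the orbit integral $\psi_k(\lambda,s)$ to an integral over $\{\mu\prec\lambda\}$ whose integrand involves a single $U_N$-matrix element $W_{N-1}$. The technical core (Proposition~\ref{prop:matrix-element} and Lemma~\ref{lemma:liouville-int}) identifies this matrix element as $\bL_{\mu_1\cdots\mu_{N-1}}(1-k)^{k-1}$ applied to an explicit kernel. Adjunction for these Calogero-Moser operators (Propositions~\ref{prop:cm-rat-adj} and~\ref{prop:rat-formal-adjoint}), together with the inductive hypothesis and the eigenfunction relation~(\ref{eq:bessel-ef}), then transforms the expression into $\prod_{i<j}(s_i-s_j)^{k-1}\phi_k(\lambda,s)$, and the result follows from the already-known integral formula of \cite{GK} (Theorem~\ref{thm:bess-bg-int}). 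No radial-part identity for $W_{k-1}$-valued functions is invoked, and no Selberg-type normalization computation is needed.

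Your route is the classical Harish-Chandra strategy, twisted by $W_{k-1}$; the step you flag as ``the main obstacle'' (that the radial part of the invariant operator $D_p$ on $W_{k-1}$-equivariant functions equals $\bL_p(k)$) is indeed the entire content of the theorem from this viewpoint, and you have not proved it. It is true and is essentially Etingof's matrix-valued radial-part construction of Calogero-Moser Hamiltonians, but carrying it out here would require work of the same order as the paper's argument. What the paper's approach buys, beyond avoiding that computation, is that the intermediate identity $\psi_k=\Delta(s)^{k-1}\phi_k$ is exactly what is later recycled in the trigonometric case (Section~\ref{sec:trig}) to give a new proof of Theorem~\ref{thm:bg-int}; your direct eigenfunction verification would establish Theorem~\ref{thm:rational} but not this link to the Gelfand-Tsetlin integral formula.
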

\addtocounter{theorem}{-1}}

{\renewcommand{\thetheorem}{\ref{thm:trig-integral}}
\begin{theorem}
The Heckman-Opdam hypergeometric function $\FF_k(\lambda, s)$ admits the integral representation
\[
\FF_k(\lambda, s) = \frac{\Gamma(Nk) \cdots \Gamma(k)}{\Gamma(k)^{N} \prod_{i < j} (e^{\frac{\lambda_i - \lambda_j}{2}} - e^{- \frac{\lambda_i - \lambda_j}{2}})^k \prod_{a = 1}^{k - 1}\prod_{i < j}(s_i - s_j - a)} \int_{X \in \OO_\Lambda} F_{k - 1}(X) \prod_{l = 1}^N \left(\frac{\det(X_l)}{\det(X_{l - 1})}\right)^{s_l} d\mu_\Lambda,
\]
where $X_l$ is the principal $l \times l$ submatrix of $X$.
\end{theorem}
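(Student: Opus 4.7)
The strategy is to realize the formula as the quasi-classical limit of the Etingof--Kirillov trace formula for Macdonald polynomials \cite{EK}, parallel to the approach underlying Theorem \ref{thm:rational}. That formula expresses $P_\lambda(q, t; x_1, \ldots, x_N)$, up to an explicit normalizer, as
\[
\Tr_{V_\lambda}\!\bigl(\Phi^{W_{k-1}}_\lambda \cdot q^{\sum_i x_i h_i}\bigr),
\]
where $V_\lambda$ is the irreducible $U_q(\gl_N)$-module of highest weight $\lambda$ and $\Phi^{W_{k-1}}_\lambda: V_\lambda \to V_\lambda \otimes W_{k-1}$ is the intertwiner normalized against the one-dimensional zero weight space $W_{k-1}[0] \simeq \CC \cdot w_{k-1}$. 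Setting $q = e^{\hbar/2}$, $t = q^k$, and rescaling the partition and the Cartan variables $x$ as in Borodin--Gorin \cite{BG}, the left-hand side converges to $\FF_k(\lambda, s)$ as $\hbar \to 0$, and the task is to identify the limit of the right-hand side with the claimed integral.

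There are four ingredients to check. First, by the orbit method, the normalized trace over $V_\lambda$ converges to the Liouville integral over the dressing orbit $\OO_\Lambda$, via geometric quantization of $\OO_\Lambda$ as $V_\lambda$. Second, in a coherent-state basis for $V_\lambda$ indexed by $X \in \OO_\Lambda$, the diagonal matrix coefficient of $\Phi^{W_{k-1}}_\lambda$ converges to $F_{k-1}(X) \in W_{k-1}$; this is forced by $U_N$-equivariance and the normalization $\Phi^{W_{k-1}}_\lambda \mapsto w_{k-1}$ on highest weight vectors, exactly as in the rational case treated for Theorem \ref{thm:rational}. Third, in the Gelfand--Tsetlin polarization the action of $q^{\sum_i x_i h_i}$ becomes multiplication by $\prod_{l=1}^N (\det X_l/\det X_{l-1})^{s_l}$ on the coherent state at $X$, because the leading principal minors of $X \in P_N^+$ are the classical Gelfand--Tsetlin Hamiltonians whose Bohr--Sommerfeld quantization produces the GT basis of $V_\lambda$. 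Fourth, the Etingof--Kirillov normalizer expands under the $\hbar$-rescaling via $q$-Gamma asymptotics into the announced prefactor containing $\Gamma(Nk)\cdots\Gamma(k)/\Gamma(k)^N$, the trigonometric Vandermonde $\prod_{i<j}(e^{(\lambda_i-\lambda_j)/2} - e^{-(\lambda_i-\lambda_j)/2})^k$, and the shifted-factorial factor $\prod_{a=1}^{k-1}\prod_{i<j}(s_i - s_j - a)$.

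The main obstacle is the first ingredient: making the trace-to-integral passage quantitative, with uniform control on the matrix elements of both $\Phi^{W_{k-1}}_\lambda$ and the Cartan element as $\hbar \to 0$, likely via explicit coherent states built from the GT basis together with uniform stationary-phase bounds on the intertwiner. A cleaner alternative that sidesteps this semiclassical analysis is to verify the formula directly by invoking the uniqueness in Theorem \ref{thm:ho-uniq}: show the right-hand side is a symmetric real-analytic joint eigenfunction of $\oL_p^\trig(k)$ with eigenvalue $p(s)$, and match the leading series term as $\lambda$ goes deep into a Weyl chamber. The eigenfunction property would reduce to a radial-parts computation for an invariant differential operator on the Poisson--Lie group $B_N$ acting by dressing on the $U_N$-equivariant section $F_{k-1}$, while the asymptotics would follow by stationary-phase localization of the dressing-orbit integral near the unique $T_N$-fixed point of $\OO_\Lambda$, at which the product of principal-minor characters reduces to $e^{(\lambda, s)}$.
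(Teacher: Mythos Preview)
Your first approach---realizing the formula as the quasi-classical limit of the Etingof--Kirillov trace---is indeed one of the two routes the paper takes, and the ingredients you list are the right ones. However, the paper does \emph{not} resolve the trace-to-integral obstacle via coherent states or geometric quantization. Instead it proves the limit (Corollary~\ref{corr:mac-limit}) purely algebraically: it uses the integral form $U_q'(\gl_N)$ and the de Concini--Kac filtration to set up an induction on PBW monomial degree (Theorem~\ref{thm:degen}), reducing the trace degeneration to the base case of Cartan elements (handled by Kirillov's character formula and the Ginzburg--Weinstein isomorphism) plus a commutator computation that becomes a Poisson-bracket identity in the limit. The intertwiner is realized as a fixed element $c_k \in (U_q'(\gl_N) \otimes W_{k-1})^{U_q'(\gl_N)}$ (Lemma~\ref{lem:inter-real}) whose image under the degeneration map $\pi \circ \varphi$ is exactly $F_{k-1}$; this replaces your coherent-state argument for ingredient~2. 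So the obstacle you flagged is real, and the paper's resolution is algebraic rather than analytic.

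More importantly, the paper's \emph{primary} proof of this theorem (Section~5.5) follows a completely different route that you do not consider. It integrates $\Psi_k(\lambda,s)$ over the Liouville tori of the Gelfand--Tsetlin integrable system on $\OO_\Lambda$, reducing to an integral over $\GT_\lambda$ whose integrand is a product of explicit $U_N$-matrix elements $W_m$. The key step (Proposition~\ref{prop:trig-matrix-element}, resting on the rational Proposition~\ref{prop:matrix-element} and a Cherednik-algebra identity, Lemma~\ref{lemma:cm-eq}) identifies each $W_m$ as the application of a higher trigonometric Calogero--Moser Hamiltonian to an explicit kernel. One then takes adjoints of these Hamiltonians (Proposition~\ref{prop:trig-formal-adjoint}), moves them onto the inductively known $\Phi_k(\mu,s')$, uses the eigenfunction relation, and arrives at the recursive structure of the Borodin--Gorin integral $\Phi_k(\lambda,s)$. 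Theorem~\ref{thm:bg-int} then finishes. Neither of your two plans touches this Gelfand--Tsetlin/adjunction mechanism, which is the paper's main technical contribution in the trigonometric case.

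Your second approach (direct verification of the hypergeometric system plus leading-term matching) is not carried out in the paper for the orbit integral as such; the closest the paper comes is Theorem~\ref{thm:trig-diag}, which checks only the quadratic Hamiltonian, and even that is obtained by degenerating Macdonald operators rather than by a radial-parts computation on $B_N$.
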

\addtocounter{theorem}{-1}}

\begin{remark}
The $k = 1$ case of the integral of Theorem \ref{thm:rational} is the HCIZ integral of \cite{H1, H2, IZ}.  It also generalizes the construction of \cite{GK}, where a similar construction is made for $k = 1, 2$.
\end{remark}

\subsection{Existing integral formulas and connection to $\beta$-Jacobi corners ensemble}

Scalings of Heckman-Opdam functions appeared in the work \cite{BG} of Borodin-Gorin on the $\beta$-Jacobi corners ensemble, where they were obtained as a certain scaling limit of the Macdonald polynomials $P_\mu(x; q, t)$.  For $\lambda_1 \geq \cdots \geq \lambda_N \in \RR^N$, define the Gelfand-Tsetlin polytope to be 
\[
\GT_\lambda := \{(\mu^l_i)_{1 \leq i \leq l, 1 \leq l < N} \mid \mu^{l+1}_i \geq \mu^l_i \geq \mu^{l+1}_{i+1}\},
\]
where we take $\mu^N_i = \lambda_i$.  A point $\{\mu^l_i\}$ in $\GT_\lambda$ is called a Gelfand-Tsetlin pattern.  To state the result of \cite{BG}, we define the integral formulas
\begin{equation} \label{eq:int-rat}
\phi_k(\lambda, s) = \Gamma(k)^{-\frac{N(N - 1)}{2}}\int_{\mu \in \GT_\lambda}\!\!\!\!\!\!\!\!\!\! e^{\sum_{l = 1}^N s_l(\sum_i \mu^l_i - \sum_i \mu^{l-1}_i)}\! \prod_{l = 1}^{N-1} \frac{\prod_{i = 1}^l \prod_{j = 1}^{l+1} |\mu^l_i - \mu^{l+1}_j|^{k-1}}{\prod_{i < j} |\mu^l_i - \mu^l_j|^{k - 1} \prod_{i < j} |\mu^{l+1}_i - \mu^{l+1}_j|^{k-1}} \prod_{i = 1} d\mu^l_i
\end{equation}
and
\begin{align}\label{eq:ho-scale}
\Phi_k(\lambda, s) &= \Gamma(k)^{-\frac{N(N - 1)}{2}} \int_{\mu \in \GT_\lambda} e^{\left(\sum_{l = 1}^N s_l \left(\sum_{i = 1}^l \mu^l_i - \sum_{i = 1}^{l-1} \mu^{l-1}_i\right)\right)}\\
&\phantom{===} \prod_{l = 1}^{N-1} \frac{\prod_{i = 1}^l \prod_{j = 1}^{l+1} |e^{\mu^l_i} - e^{\mu^{l+1}_j}|^{k - 1}}{\prod_{i < j} |e^{\mu^l_i} - e^{\mu^l_j}|^{k - 1} \prod_{i < j} |e^{\mu^{l+1}_i} - e^{\mu^{l+1}_j}|^{k - 1}} \prod_{l = 1}^{N-1} e^{-(k - 1)\sum_{i = 1}^l \mu^l_i} \prod_i d\mu^l_i, \nonumber
\end{align}
where (\ref{eq:int-rat}) is a rational degeneration of (\ref{eq:ho-scale}).  In \cite{GK}, the formula (\ref{eq:int-rat}) was related to the multivariate Bessel functions as follows; a related approach was given for $k = 1/2, 1, 2$ in \cite[Appendix C]{FoRa}.

\begin{theorem}[{\cite[Section V]{GK}}] \label{thm:bess-bg-int}
For positive real $k > 0$ and $\lambda_1 > \cdots > \lambda_N$, the multivariate Bessel function is given by 
\[
\BB_k(\lambda, s) = \frac{\Gamma(Nk) \cdots \Gamma(k)}{\Gamma(k)^{N}} \frac{\phi_k(\lambda, s)}{\prod_{i < j}(\lambda_i - \lambda_j)^k}.
\]
\end{theorem}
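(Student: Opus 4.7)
The plan is to start from the representation-valued integral of Theorem \ref{thm:rational} and integrate out the fibers of the Gelfand-Tsetlin integrable system on $\OO_\lambda$. Recall that the eigenvalues of the principal $l \times l$ submatrices form a completely integrable system with base $\GT_\lambda$, whose generic Liouville fibers are tori of dimension $N(N-1)/2$. Accordingly, I would decompose the Liouville measure as $d\mu_\lambda = dT \, d\mu^{\GT}$ and rewrite
\[
\int_{X \in \OO_\lambda} f_{k-1}(X) e^{\sum_l s_l X_{ll}} d\mu_\lambda = \int_{\mu \in \GT_\lambda} \Bigl( \int_{T_\mu} f_{k-1}(X) \, dT \Bigr) e^{\sum_l s_l (\sum_i \mu^l_i - \sum_i \mu^{l-1}_i)} \, d\mu^{\GT},
\]
using that each diagonal entry $X_{ll} = \Tr X_l - \Tr X_{l-1}$ is constant on the GT torus $T_\mu$ and that on the fibers the combinatorial change of variables on the right-hand side of the exponential holds.

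Next, I would identify $I_{k-1}(\mu) := \int_{T_\mu} f_{k-1}(X)\, dT$, viewed as a scalar via $W_{k-1}[0] \simeq \CC \cdot w_{k-1}$, as the application of a higher Calogero-Moser Hamiltonian $\bL_q^{(k-1)}$ (in the GT variables) to the explicit kernel
\[
K(\mu) := \prod_{l=1}^{N-1} \frac{\prod_{i = 1}^l\prod_{j = 1}^{l+1} |\mu^l_i - \mu^{l+1}_j|^{k-1}}{\prod_{i<j}|\mu^l_i - \mu^l_j|^{k-1} \prod_{i<j}|\mu^{l+1}_i - \mu^{l+1}_j|^{k-1}}.
\]
This identification is the heart of the argument and is where the relation between spherical parts of rational Cherednik algebras of different ranks, advertised in the introduction, enters: matrix coefficients of the equivariant function $f_{k-1}$ with respect to an adapted basis of $W_{k-1}$ translate into operators relating Cherednik algebras in ranks $l$ and $l+1$, which one recognizes as differential operators on the strata of $\GT_\lambda$.

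Finally, I would apply adjunction under the pairing given by integration against $d\mu^{\GT}$. After accounting for the conjugations by Vandermonde factors in \eqref{eq:conj-cm-ham-def}, the operator $\bL_q^{(k-1)}$ is essentially self-adjoint on $\GT_\lambda$ up to boundary terms that vanish thanks to the $k-1$ power in $K$. Moving it onto the exponential $e^{\sum_l s_l(\sum_i \mu^l_i - \sum_i \mu^{l-1}_i)}$, which is a joint eigenfunction of the Calogero-Moser system with eigenvalue $q(s)$, produces a scalar factor containing precisely $\prod_{i<j}(s_i - s_j)^{k-1}$. This cancels the analogous factor in the prefactor of Theorem \ref{thm:rational}, and what remains is exactly the integrand \eqref{eq:int-rat} defining $\phi_k(\lambda, s)$, yielding the claim.

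The main obstacle is the second step: the explicit identification of $I_{k-1}(\mu)$ as $\bL_q^{(k-1)} K(\mu)$. Direct computation of the torus integral of the vector-valued function $f_{k-1}$ is delicate because it requires pairing against the zero-weight vector $w_{k-1}$ in an adapted basis, and the appearance of the Calogero-Moser operator is not manifest from this pairing alone. It is here that a creation/annihilation mechanism between spherical Cherednik algebras in adjacent ranks enters; organizing this recursion cleanly across all $N-1$ levels of the Gelfand-Tsetlin pattern is the technical novelty on which the proof rests.
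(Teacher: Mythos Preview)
Your proposal has a circularity problem. In this paper, Theorem~\ref{thm:bess-bg-int} is not proved at all: it is quoted from \cite[Section V]{GK} as an input. The paper's own Theorem~\ref{thm:rational}, which you take as your starting point, is proved in Subsection~4.4 precisely by carrying out the Gelfand--Tsetlin torus integration and adjunction argument you sketch, arriving at the identity
\[
\psi_k(\lambda,s)=\prod_{i<j}(s_i-s_j)^{k-1}\,\phi_k(\lambda,s),
\]
and then \emph{invoking} Theorem~\ref{thm:bess-bg-int} to conclude. So deducing Theorem~\ref{thm:bess-bg-int} from Theorem~\ref{thm:rational} would be assuming what you want to prove.

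Beyond the circularity, note that the computation you outline is not a new route but exactly the content of Subsection~4.4 read backwards: the torus integral of $f_{k-1}$ factors as a product over levels, each factor $W_m$ is identified via Lemma~\ref{lemma:liouville-int} and Proposition~\ref{prop:matrix-element} as an application of $\overline{L}_{\mu_1\cdots\mu_m}(1-k)^{k-1}$ to the kernel $\Delta(\mu^m,\mu^{m+1})^{k-1}$, and adjunction (Propositions~\ref{prop:cm-rat-adj} and~\ref{prop:rat-formal-adjoint}) together with the eigenfunction relation~\eqref{eq:bessel-ef} produces the $(s_i-s_j)^{k-1}$ factors. If you want an honest proof of Theorem~\ref{thm:bess-bg-int} along these lines, you would first need an independent proof of Theorem~\ref{thm:rational} that does not already rely on Theorem~\ref{thm:bess-bg-int}; the paper does not supply one in the rational case.
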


\begin{remark}
We have adjusted the normalization of $\BB_k(\lambda, s)$ in Theorem \ref{thm:bess-bg-int} from \cite{GK} so that $\BB_k(\lambda, 0) = 1$.
\end{remark}

In the trigonometric setting, the integral formula of (\ref{eq:ho-scale}) was realized by Borodin-Gorin as a scaling limit of Macdonald polynomials.  Applying this scaling to the eigenfunction relation for Macdonald polynomials, they showed that $\Phi_k(\lambda, s)$ was an eigenfunction of the quadratic Calogero-Moser Hamiltonian $L_{p_2}^\text{trig}(k - 1)$.  Together with some arguments which we detail in Subsection \ref{sec:ho-fn} for $k$ a positive integer, this relates $\Phi_k(\lambda, s)$ to $\FF_k(\lambda, s)$.

\begin{theorem}[{\cite[Proposition 6.2]{BG}}] \label{thm:bg}
For any positive real $k > 0$, $\Phi_k(\lambda, s)$ is the following scaling limit of Macdonald polynomials
\[
\Phi_k(\lambda, s) = \lim_{\eps \to 0} \eps^{k N (N - 1)/2} P_{\lfloor \eps^{-1} \lambda \rfloor}(e^{\eps s}; e^{-\eps}, e^{-k\eps}).
\]
\end{theorem}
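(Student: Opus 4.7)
The plan is to derive this limit directly from Macdonald's combinatorial branching rule
\[
P_\mu(x_1, \ldots, x_N; q, t) = \sum_{\emptyset = \mu^0 \prec \mu^1 \prec \cdots \prec \mu^N = \mu} \prod_{l = 1}^N \psi_{\mu^l/\mu^{l-1}}(q, t)\, x_l^{|\mu^l| - |\mu^{l-1}|},
\]
where the sum ranges over chains of interlacing partitions (equivalently integer Gelfand-Tsetlin patterns with top row $\mu$) and $\psi_{\mu/\nu}(q, t)$ is the explicit skew branching coefficient, expressible as a finite product of ratios $(1 - q^a t^b)/(1 - q^{a'} t^{b'})$ in the parts of $\mu$ and $\nu$.

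Under the substitution $q = e^{-\eps}$, $t = e^{-k\eps}$, $x_l = e^{\eps s_l}$, and $\mu = \lfloor \eps^{-1}\lambda\rfloor$, I would reparametrize the summation variable by $\tilde\mu^l_i := \eps\mu^l_i$, so that $\tilde\mu$ ranges over a lattice of mesh $\eps$ inside $\GT_\lambda$. The monomial factor then becomes $\exp\bigl(\sum_l s_l(\sum_i \tilde\mu^l_i - \sum_i \tilde\mu^{l-1}_i)\bigr)$, exactly matching the exponential in the integrand of $\Phi_k$. Since there are $N(N-1)/2$ free coordinates $\mu^l_i$, the discrete sum, weighted by the mesh volume $\eps^{N(N-1)/2}$, is a Riemann sum converging to $\int_{\GT_\lambda} \prod_i d\mu^l_i$.

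The main analytic step is the asymptotic analysis of each $\psi_{\mu^l/\mu^{l-1}}(e^{-\eps}, e^{-k\eps})$ when the partition entries scale as $\eps^{-1}$. Each factor $(1 - q^a t^b)$ with $a$ of order $\eps^{-1}$ and $b$ bounded behaves as $1 - e^{-\eps a - \eps k b}$, which tends to a function of the continuous variables, while factors with $a$ bounded contribute explicit powers of $\eps$ (via $1 - e^{-\eps r} \sim \eps r$) together with Gamma values arising from the $q$-Pochhammer asymptotic $(q^x; q)_\infty \sim \Gamma(x)^{-1}(1-q)^{1-x}(q;q)_\infty$. Collecting these contributions across all $N$ levels and tracking cancellations carefully, the rational factors should assemble into the Vandermonde-type ratio in the integrand of (\ref{eq:ho-scale}); the Gamma values should combine into the prefactor $\Gamma(k)^{-N(N-1)/2}$; additional exponentials arising from rewrites of the form $|e^{\tilde\mu^l_i} - e^{\tilde\mu^{l+1}_j}|^{k-1} = e^{(k-1)\max(\tilde\mu^l_i,\tilde\mu^{l+1}_j)}|1 - e^{-|\tilde\mu^l_i - \tilde\mu^{l+1}_j|}|^{k-1}$ should combine into the weight $\prod_l e^{-(k-1)\sum_i \tilde\mu^l_i}$; and the combined power of $\eps$ should precisely account for the renormalization $\eps^{kN(N-1)/2}$ of the statement.

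The main obstacle is executing this asymptotic analysis rigorously. Macdonald's product formula for $\psi_{\mu/\nu}$ contains many factors that individually diverge or vanish as $\eps \to 0$, and one must carefully track cancellations to extract the surviving rational, exponential, and power-of-$\eps$ contributions. In particular, the asymmetric factor $e^{-(k-1)\sum_i \tilde\mu^l_i}$ reflects the non-symmetric treatment of $\mu$ and $\nu$ in Macdonald's formula, and identifying its origin precisely requires careful bookkeeping. A secondary issue is justifying the exchange of limit and summation: since $\GT_\lambda$ is compact and for $k > 0$ the integrand is continuous in its interior with at worst integrable singularities on the boundary, this can be handled by a dominated convergence argument with uniform asymptotic bounds on the $\psi$ coefficients.
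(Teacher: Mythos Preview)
The paper does not give its own proof of Theorem~\ref{thm:bg}; the result is quoted from \cite[Proposition~6.2]{BG} as background. Your outline---Macdonald's branching rule, rescale the Gelfand--Tsetlin sum into a Riemann sum over $\GT_\lambda$, and extract the limiting integrand from the asymptotics of the $\psi_{\mu/\nu}$ coefficients---is precisely the argument carried out in \cite{BG}. So your proposal reproduces the original proof rather than anything new in this paper, and the obstacles you flag (tracking the cancellations in $\psi_{\mu/\nu}$, dominated convergence near the boundary of $\GT_\lambda$) are exactly the technical content of that reference.

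It is worth noting, though, that for positive integer $k$ the present paper does obtain Theorem~\ref{thm:bg} by a genuinely different route, even if it is not stated as such. Corollary~\ref{corr:mac-limit} identifies the scaling limit of $P_\lambda$ with the orbit integral $\Psi_k(\lambda,s)$ divided by $\prod_{a=1}^{k-1}\prod_{i<j}(s_i-s_j-a)$, via degeneration of the Etingof--Kirillov trace formula and the Poisson structure on $B_N$; the computation concluding Section~\ref{sec:trig} then shows $\Psi_k(\lambda,s)=\prod_{a=1}^{k-1}\prod_{i<j}(s_i-s_j-a)\,\Phi_k(\lambda,s)$ by integrating over Gelfand--Tsetlin tori and applying adjunction for Calogero--Moser Hamiltonians. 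Combining these gives Theorem~\ref{thm:bg} without ever touching the branching coefficients $\psi_{\mu/\nu}$. This route avoids the delicate asymptotic bookkeeping you describe but is restricted to integer $k$; your proposal (the \cite{BG} argument) handles all real $k>0$ at the cost of that bookkeeping.
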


\begin{theorem}[{\cite[Definition 6.1 and Proposition 6.3]{BG}}] \label{thm:bg-int}
For any positive real $k > 0$ and $\lambda_1 > \cdots > \lambda_N$, the Heckman-Opdam hypergeometric function is given by
\[
\FF_k(\lambda, s) = \frac{\Gamma(Nk) \cdots \Gamma(k)}{\Gamma(k)^{N}} \frac{\Phi_k(\lambda, s)}{\prod_{i < j} (e^{\frac{\lambda_i - \lambda_j}{2}} - e^{-\frac{\lambda_i - \lambda_j}{2}})^k}.
\]
\end{theorem}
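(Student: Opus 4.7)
My plan is to verify that on the open positive Weyl chamber $\lambda_1 > \cdots > \lambda_N$ the ratio
\[
G_k(\lambda,s) := \frac{\Gamma(Nk)\cdots\Gamma(k)}{\Gamma(k)^N}\cdot\frac{\Phi_k(\lambda,s)}{\delta(\lambda)^k}, \qquad \delta(\lambda) := \prod_{i<j}\bigl(e^{(\lambda_i-\lambda_j)/2} - e^{-(\lambda_i-\lambda_j)/2}\bigr),
\]
is real-analytic, symmetric, satisfies the hypergeometric eigenvalue equations (\ref{eq:hyper}) for all symmetric $p$, and has the leading expansion prescribed by Theorem \ref{thm:ho-uniq}. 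Uniqueness then forces $G_k = \FF_k$ on the chamber, and the identity extends to the full tubular neighborhood of $\RR^N$ by real-analyticity of $\FF_k$. Symmetry of $\Phi_k$ in $\lambda$ follows from simultaneously permuting the rows of a Gelfand-Tsetlin pattern, which cancels the degree-$k$ antisymmetry of $\delta(\lambda)^k$.

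For the eigenvalue equations, first use Theorem \ref{thm:bg} to realize $\Phi_k$ as a scaling limit of Macdonald polynomials. Macdonald polynomials are joint eigenfunctions of the commuting family of Macdonald-Ruijsenaars difference operators $D_p(q,t)$; rescaling each by the appropriate power of $\eps$ and subtracting leading constants converts them into differential operators in the limit, yielding $L_p^\trig(k-1)\Phi_k = \mu_{p,k-1}(s)\Phi_k$ for every symmetric $p$ and generalizing the quadratic case of BG recalled in the text. Next, apply the conjugation identity
\[
\oLt_p(k) = \delta(\lambda)^{-k}\circ L_p^\trig(k)\circ \delta(\lambda)^k,
\]
which follows immediately from (\ref{eq:conj-tcm-ham-def}) and the factorization $\Delta(e^\lambda) = e^{\frac{N-1}{2}\sum_i\lambda_i}\delta(\lambda)$. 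Combined with the Heckman-Opdam shift operator intertwining $L_p^\trig(k-1)$ with $L_p^\trig(k)$ -- which absorbs the residual potential terms and shifts the spectral parameter by $\rho$ -- this gives $\oLt_p(k)G_k = p(s)G_k$.

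To pin down the normalization, analyze the asymptotics of $\Phi_k(\lambda,s)$ as $\lambda$ tends to infinity in the positive Weyl chamber. The integrand concentrates near the vertex $\mu^l_i = \lambda_i$ for $i \le l$, and localizing the integral there reduces it to a product of Selberg-type beta integrals which evaluate to $\frac{\Gamma(k)^N}{\Gamma(Nk)\cdots\Gamma(k)}\prod_{i<j}\prod_{a=0}^{k-1}(s_i-s_j+a)^{-1}\cdot \delta(\lambda)^k\cdot e^{(\lambda,s-k\rho)}$. Dividing by $\delta(\lambda)^k$ and applying the asserted prefactor matches exactly the normalization of Theorem \ref{thm:ho-uniq}.

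The main obstacle is the shift-operator step: the text only records the quadratic BG eigenvalue equation, and extending to arbitrary $L_p^\trig$ requires either the Cherednik-algebra shift-operator machinery, or (as suggested by the abstract) bypassing the BG route altogether by using the paper's dressing-orbit formula, Theorem \ref{thm:trig-integral}. In the latter approach one would decompose the Liouville measure $d\mu_\Lambda$ along the Liouville tori of the Gelfand-Tsetlin integrable system, identify the fiberwise integral of $F_{k-1}(X)\prod_l(\det X_l/\det X_{l-1})^{s_l}$ as an application of higher Calogero-Moser Hamiltonians to an explicit Vandermonde-type kernel, and then use adjunction to transfer these Hamiltonians onto the exponential weight in $s$, thereby recovering the integrand of (\ref{eq:ho-scale}).
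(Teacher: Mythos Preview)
Your overall strategy --- verify the hypotheses of Theorem~\ref{thm:ho-uniq} and invoke uniqueness --- matches the paper's approach in Subsection~\ref{sec:ho-fn}. However, your ``main obstacle'' is self-inflicted: you do not need the eigenvalue equations for all $p$, nor shift operators, nor the dressing-orbit machinery. The paper uses only the \emph{quadratic} eigenvalue equation $\oL_{p_2}^\trig(k)\bigl(\Phi_k/\Delta^\trig(\lambda)^k\bigr) = p_2(s)\,\Phi_k/\Delta^\trig(\lambda)^k$, which is exactly what \cite[Proposition~6.3]{BG} provides. The point you are missing is a structural fact from \cite[Section~4.2]{HS}: for generic (non-integral dominant) $s$, any symmetric analytic eigenfunction of $\oL_{p_2}^\trig(k)$ whose series expansion has leading term $e^{(s-k\rho,\lambda)}$ is automatically a joint eigenfunction of all the $\oL_p^\trig(k)$. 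So once the quadratic equation and the leading term are established, the full hypergeometric system follows for free; one then passes to non-generic $s$ by analyticity in $s$.

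A few further differences. The paper computes the leading monomial not via Selberg integrals but by induction on $N$ through the recursive structure (\ref{eq:recur}), reducing to the elementary one-variable integral $\int_{b_2}^{b_1}(e^{b_1}-e^x)^{k-1}e^{xs}\,dx$, which it evaluates by the binomial theorem. Your claim that symmetry in $\lambda$ comes from ``permuting the rows of a Gelfand-Tsetlin pattern'' is not correct as stated: the interlacing conditions are not preserved under row permutation, and the paper instead establishes the formula on the open chamber and then symmetrizes a posteriori. Your alternative suggestion to go through Theorem~\ref{thm:trig-integral} is circular in the paper's logic, since that theorem is proved in Section~\ref{sec:trig} \emph{using} Theorem~\ref{thm:bg-int}. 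Finally, note that the argument in Subsection~\ref{sec:ho-fn} is carried out only for positive integer $k$; the general $k>0$ statement is taken from \cite{BG}.
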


\begin{remark}
The integral formulas of Theorems \ref{thm:bess-bg-int} and \ref{thm:bg-int} are stated only for $\lambda_1 > \cdots > \lambda_N$.  We may extend them to $\{\lambda_i \neq \lambda_j\}$ by imposing that $\FF_k(\lambda, s)$ and $\BB_k(\lambda, s)$ are symmetric in $\lambda$.  Under this extension, by taking limits of relevant normalizations of (\ref{eq:int-rat}) and (\ref{eq:ho-scale}) we may show that the expressions of Theorems \ref{thm:bess-bg-int} and \ref{thm:bg-int} extend to $\lambda \in \RR^N$.  We give such arguments for the trigonometric case when $k > 0$ is a positive integer in Subsection \ref{sec:ho-fn}.
\end{remark}

\begin{remark}
The main result of \cite[Theorem 6.3]{Kaz} gives for each Weyl chamber a contour integral formula for a solution to the hypergeometric system (\ref{eq:hyper}) holomorphic in that Weyl chamber.  These formulas have the same integrand as the integral of Theorem \ref{thm:bg-int} but contours which are different for each Weyl chamber. 
\end{remark}

\subsection{Realization via quasi-classical limit of quantum group intertwiners}

The formula of Theorem \ref{thm:trig-integral} is the quasi-classical limit of the trace of an intertwiner of quantum group representations.  We will give a second approach to its proof using this theory; when combined with our first proof of Theorem \ref{thm:trig-integral}, this provides a new proof of Theorem \ref{thm:bg-int} from \cite{BG}.  Our approach proceeds via the degeneration of $U_q(\gl_N)$-representations; we summarize the main idea in this subsection and give full details in Section \ref{sec:qcl}.  

For a dominant integral weight $\lambda$, let $L_\lambda$ denote the corresponding highest weight irreducible representation of $U_q(\gl_N)$.  Let $\rho = \frac{1}{2} \sum_{\alpha > 0} \alpha$ be half the sum of the positive roots.  In \cite{EK}, it was shown that there exists a unique intertwiner $\Phi_\lambda^N : L_{\lambda + (k - 1)\rho} \to L_{\lambda + (k - 1) \rho} \otimes W_{k - 1}$ of $U_q(\gl_N)$-representations such that the highest weight vector $v_{\lambda + (k - 1)\rho} \in L_{\lambda + (k - 1)\rho}$ is mapped to 
\[
\Phi_\lambda^N(v_{\lambda + (k-1)\rho}) = v_{\lambda + (k - 1)\rho} \otimes w_{k - 1} + (\text{lower order terms}),
\]
where the lower order terms have weight less than $\lambda + (k - 1)\rho$ in the $L_{\lambda + (k - 1)\rho}$ tensor factor.  They expressed Macdonald polynomials in terms of these intertwiners in the following theorem.

\begin{theorem}[{\cite[Theorem 1]{EK}}] \label{thm:ek}
The Macdonald polynomial $P_\lambda(x; q^2, q^{2k})$ is given by 
\begin{equation} \label{eq:ek-mac}
P_\lambda(x; q^2, q^{2k}) = \frac{\Tr(\Phi_\lambda^N x^h)}{\Tr(\Phi_0^N x^h)}.
\end{equation}
\end{theorem}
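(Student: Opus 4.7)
The plan is to verify that $R_\lambda(x) := \Tr(\Phi_\lambda^N x^h)/\Tr(\Phi_0^N x^h)$ satisfies the characterizing properties of the Macdonald polynomial $P_\lambda(x; q^2, q^{2k})$: namely, that $R_\lambda$ is a Weyl-symmetric Laurent polynomial with triangular expansion $R_\lambda(x) = x^\lambda + \sum_{\mu < \lambda} c_\mu x^\mu$, and is a joint eigenfunction of the Macdonald difference operators $D_1, \ldots, D_N$ with the eigenvalues expected for $\lambda$.

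The triangularity is the most direct check: expanding the numerator trace over a weight basis of $L_{\lambda+(k-1)\rho}$, the highest weight vector contributes $x^{\lambda+(k-1)\rho}$ by the normalization of $\Phi_\lambda^N$ together with the identification $W_{k-1}[0] \simeq \CC \cdot w_{k-1}$, while all other weight vectors contribute strictly smaller monomials in dominance order. The ratio with $\Tr(\Phi_0^N x^h)$ shifts the leading term to $x^\lambda$ and preserves triangularity. Weyl symmetry can then be deduced a posteriori from the eigenfunction property together with uniqueness of joint Macdonald eigenfunctions with prescribed leading term, or proved directly via the dynamical Weyl group action on intertwiners.

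For the eigenfunction property, the approach uses the center $Z(U_q(\gl_N))$: for each fundamental representation $V_{\omega_r}$, the quantum trace of the universal $R$-matrix in $V_{\omega_r}$ produces a central element $C_r$, and one evaluates $\Tr(\Phi_\lambda^N C_r x^h)$ in two ways. On the one hand, $C_r$ acts on $L_{\lambda+(k-1)\rho}$ by the scalar $\chi_r(\lambda+(k-1)\rho+\rho)$, giving that scalar times $\Tr(\Phi_\lambda^N x^h)$. On the other, using the intertwining property of $\Phi_\lambda^N$ and commuting $C_r$ through the trace produces a weighted sum of shifted traces $\Tr(\Phi_\lambda^N x^{h+\mu})$ over weights $\mu$ of $V_{\omega_r}$, which one recognizes as the Macdonald operator $D_r$ applied to $\Psi_\lambda(x) := \Tr(\Phi_\lambda^N x^h)$. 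Comparison yields the eigenvalue equation, and dividing by $\Psi_0(x)$ transfers it to $R_\lambda$.

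The main obstacle is precisely this radial-part computation in the third paragraph: translating the central-element insertion into the Macdonald operator requires careful bookkeeping with the universal $R$-matrix, the weight decomposition of $V_{\omega_r}$, and in particular crucial use of the zero-weight condition on $w_{k-1}$, which ensures that the shifted traces assemble into the Macdonald operator acting on a scalar function rather than a $W_{k-1}$-valued intertwiner-twisted function.
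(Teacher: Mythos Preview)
The paper does not give its own proof of this statement: Theorem~\ref{thm:ek} is quoted as \cite[Theorem~1]{EK} and used as input, with no argument supplied beyond the citation. So there is no proof in the paper to compare your proposal against.

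That said, your outline is essentially the argument of the original Etingof--Kirillov~Jr.\ paper. The triangularity step is exactly as you describe. For the eigenfunction step, EK do use central elements of $U_q(\gl_N)$ and compute their radial parts to obtain the Macdonald operators; the key technical point you flag---that the zero-weight condition on $w_{k-1}$ is what makes the trace a scalar-valued function on which the Macdonald operators act---is indeed what makes the argument go through. Your sketch is accurate as a summary of \cite{EK}, though of course the actual radial-part computation (identifying the coefficients of the shifted traces with those of $D^r_{N,x}(q^2,q^{2k})$) is where the work lies and is only gestured at here.
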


We characterize both sides of (\ref{eq:ek-mac}) under the quasi-classical limit transition of \cite{BG} in the following two results.  Corollary \ref{corr:mac-limit} converts traces of quantum group representations to integrals over dressing orbits to yield an integral expression for the limit.  Theorem \ref{thm:trig-diag} uses the fact that the Macdonald difference operators diagonalize both sides of (\ref{eq:ek-mac}) to show that this limiting integral is diagonalized by the quadratic trigonometric Calogero-Moser Hamiltonian.

{\renewcommand{\thetheorem}{\ref{corr:mac-limit}}
\begin{corr} 
For sequences of dominant integral signatures $\{\lambda_m\}$ and real quantization parameters $\{q_m\}$ so that $\lim_{m \to \infty} q_m \to 1$ and $\lim_{m \to \infty} -2 \log(q_m) \lambda_m = \lambda$ is dominant regular, we have
\[
\lim_{m \to \infty} (-2\log(q_m))^{kN(N - 1)/2} P_{\lambda_m}(q_m^{-2s}; q_m^{2}, q_m^{2k}) =  \frac{\int_{\OO_\Lambda} F_{k - 1}(X) \prod_{l = 1}^N \left(\frac{\det(X_l)}{\det(X_{l-1})}\right)^{s_l} d\mu_\Lambda}{\prod_{a = 1}^{k-1} \prod_{i < j} (s_i - s_j - a)}.
\]
\end{corr}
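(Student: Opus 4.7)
The starting point is the Etingof-Kirillov expression from Theorem \ref{thm:ek},
\[
P_{\lambda_m}(q_m^{-2s}; q_m^{2}, q_m^{2k}) = \frac{\Tr(\Phi_{\lambda_m}^N q_m^{-2sh})}{\Tr(\Phi_0^N q_m^{-2sh})},
\]
and the plan is to take the quasi-classical limit $q_m \to 1$ of the numerator and denominator separately. The overall scaling factor will split as $(-2\log q_m)^{kN(N-1)/2} = (-2\log q_m)^{N(N-1)/2}\cdot(-2\log q_m)^{(k-1)N(N-1)/2}$, with the first factor absorbed into the numerator limit (matching $\dim\OO_\Lambda/2 = N(N-1)/2$) and the reciprocal of the second absorbed into the denominator.

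For the numerator, I would argue that the family of $U_{q_m}(\gl_N)$-representations $L_{\lambda_m + (k-1)\rho}$ is a geometric quantization of the dressing orbit $\OO_\Lambda$ with quantization parameter $-2\log q_m$; under this correspondence the rescaled trace becomes an integral against Liouville measure,
\[
\lim_{m \to \infty}(-2\log q_m)^{N(N-1)/2}\Tr\bigl(\Phi_{\lambda_m}^N q_m^{-2sh}\bigr) = \int_{\OO_\Lambda} F_{k-1}(X)\prod_{l=1}^N \left(\frac{\det X_l}{\det X_{l-1}}\right)^{s_l}d\mu_\Lambda.
\]
By uniqueness, $\Phi_{\lambda_m}^N$ projected onto $W_{k-1}[0]\simeq\CC$ degenerates to the unique $U_N$-equivariant map $F_{k-1}$. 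The Cartan operator $q_m^{-2sh}$ acts on a Gelfand-Tsetlin weight vector $\{\mu_i^l\}$ by $q_m^{-2\sum_l s_l(|\mu^l|-|\mu^{l-1}|)}$, where $|\mu^l| = \sum_{i=1}^l \mu_i^l$; under the semiclassical identification of rescaled patterns $-2\log(q_m)\{\mu_i^l\}$ with points of the Liouville torus of $\OO_\Lambda$ on which $|\mu^l| = \log\det X_l$, this exponential becomes $\prod_l (\det X_l/\det X_{l-1})^{s_l}$, and Riemann summation over the Gelfand-Tsetlin lattice realizes the discrete trace as the Liouville integral.

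For the denominator, I would evaluate $\Tr(\Phi_0^N q_m^{-2sh})$ using its closed-form expression as a $(q,t)$-Vandermonde product, which is the normalization built into the Etingof-Kirillov realization of Macdonald polynomials. Specializing at $t = q_m^k$ and expanding as $q_m \to 1$, each factor $(1 - q_m^{-2(s_i-s_j)-2a})$ for $a \in \{1, \ldots, k-1\}$ contributes one power of $(-2\log q_m)$ together with the linear factor $-(s_i-s_j) - a$, yielding
\[
\Tr(\Phi_0^N q_m^{-2sh})\;\sim\;(-2\log q_m)^{(k-1)N(N-1)/2}\prod_{a=1}^{k-1}\prod_{i<j}(s_i-s_j-a)
\]
up to a sign which cancels with the analogous sign from the numerator. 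Dividing the numerator limit by this denominator limit gives the corollary.

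The main obstacle is making the quasi-classical limit of the numerator precise: one must establish uniform Berezin-Toeplitz style control on the matrix elements of $\Phi_{\lambda_m}^N$ in the Gelfand-Tsetlin basis so that the discrete weight sum in the trace converges to the Liouville integral over $\OO_\Lambda$. Dominant regularity of $\lambda$ is essential here, ensuring that the limiting dressing orbit has full dimension $N(N-1)$ and that the rescaled Gelfand-Tsetlin lattice fills out a smooth polytope supporting the Liouville measure.
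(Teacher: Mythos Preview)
Your overall plan---splitting via Theorem \ref{thm:ek} into numerator and denominator, with the denominator handled by the explicit product formula of Proposition \ref{prop:ek-denom}---matches the paper exactly, and your denominator computation is correct.

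The gap is in the numerator. You propose to obtain
\[
\lim_{m\to\infty}(-2\log q_m)^{N(N-1)/2}\Tr(\Phi_{\lambda_m}^N q_m^{-2sh}) = \int_{\OO_\Lambda} F_{k-1}(X)\prod_l\Big(\frac{\det X_l}{\det X_{l-1}}\Big)^{s_l} d\mu_\Lambda
\]
by a geometric-quantization / Riemann-sum argument over Gelfand-Tsetlin patterns, and you yourself flag the required ``Berezin-Toeplitz style control'' as the main obstacle. That control is not supplied, and the sentence ``by uniqueness, $\Phi_{\lambda_m}^N$ degenerates to $F_{k-1}$'' hides the real issue: the intertwiners $\Phi_{\lambda_m}^N$ live on different representations for each $m$, so there is no a priori object whose limit one is taking.

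The paper circumvents this analytic difficulty entirely. It first shows (Lemma \ref{lem:inter-real}, using the Joseph-Letzter description of the locally finite part of $U_q(\gl_N)$) that there is a \emph{single} element $c_k \in (U_q'(\gl_N)\otimes W_{k-1})^{U_q'(\gl_N)}$, independent of $\lambda$, whose action on each $L_{\lambda+(k-1)\rho}$ realizes $\Phi_\lambda^N$ and whose image under the degeneration $\pi\circ\varphi$ is $F_{k-1}$. The numerator limit then reduces (Corollary \ref{corr:limit-tr}) to the purely algebraic Theorem \ref{thm:degen}, which computes $\lim (-2\log q_m)^{N(N-1)/2}\Tr|_{L_{\lambda_m}}(\pi_{q_m}(z)q_m^{-2(s,h)})$ for a fixed $z\in U_q'(\gl_N)$ by induction on the de Concini-Kac filtration degree of PBW monomials: the base case is Kirillov's character formula together with the Ginzburg-Weinstein isomorphism, and the inductive step moves a root vector cyclically around the trace and matches the resulting commutators with Poisson brackets in $\CC[B_N]$. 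No semiclassical analysis of Gelfand-Tsetlin matrix elements is needed.
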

\addtocounter{theorem}{-1}}

{\renewcommand{\thetheorem}{\ref{thm:trig-diag}}
\begin{theorem}
The trigonometric Calogero-Moser Hamiltonian $\overline{L}_{p_2}^\text{trig}(k)$ is diagonalized on
\[
\frac{1}{\prod_{i < j}(e^{\frac{\lambda_i - \lambda_j}{2}} - e^{-\frac{\lambda_i - \lambda_j}{2}})^k \prod_{a = 1}^{k-1} \prod_{i < j} (s_i - s_j - a)} \int_{\OO_\Lambda} F_{k - 1}(X) \prod_{l = 1}^N \left(\frac{\det(X_l)}{\det(X_{l-1})}\right)^{s_l} d\mu_\Lambda
\]
with eigenvalue $\sum_i s_i^2$.
\end{theorem}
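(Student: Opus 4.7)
The plan is to pass to the quasi-classical limit in a Macdonald difference eigenequation acting in the index $\lambda_m$. Since $\overline{L}_{p_2}^\trig(k)$ is a differential operator in $\lambda$ with eigenvalue a polynomial in $s$, and since in the scaling of Corollary \ref{corr:mac-limit} the index $\lambda_m \sim \lambda/\epsilon$ is rescaled by $\epsilon := -2\log q_m$ while $s$ remains finite, the appropriate operator must shift $\lambda_m$ rather than $x = q_m^{-2s}$: indeed, $T_{q,i}$ on $x = q_m^{-2s}$ degenerates to a fixed unit shift of $s_i$, not a differential operator, while a unit shift of $\lambda_m$ becomes the infinitesimal shift $\lambda_i \mapsto \lambda_i + \epsilon$. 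Such an operator is provided by Macdonald-Cherednik bispectrality: there is a commuting family of dual difference operators $\widetilde{D}_r^{\lambda_m}$ in $\lambda_m$ (equivalently the $X$-operators of the double affine Hecke algebra, or in the representation-theoretic language of Etingof-Kirillov the shift operators coming from tensoring $L_{\lambda_m + (k-1)\rho}$ by minuscule representations $W_\mu$) for which $P_{\lambda_m}(x; q_m^2, q_m^{2k})$ is a joint eigenfunction with eigenvalues that are symmetric Laurent polynomials in $x$.

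Taking the first such operator $\widetilde{D}^{\lambda_m}$ and expanding $T_{\lambda_m, i}: \lambda_i \mapsto \lambda_i + \epsilon$ as $1 + \epsilon \partial_{\lambda_i} + \frac{\epsilon^2}{2}\partial_{\lambda_i}^2 + O(\epsilon^3)$, I expect a power series in $\epsilon$ of differential operators in $\lambda$: the leading term is a scalar, the $\epsilon$-term is a first-order operator proportional to $\sum_i \partial_{\lambda_i}$, and after subtracting these and dividing by $\epsilon^2/2$ the $\epsilon^2$-term yields precisely $\overline{L}_{p_2}^\trig(k)$, with the $k(1-k)/\sinh^2((\lambda_i - \lambda_j)/2)$ potential emerging from the $t_m = q_m^{2k}$ factors in the coefficients of $\widetilde{D}^{\lambda_m}$ by the same mechanism as in the standard Ruijsenaars-to-Calogero-Moser degeneration. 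Simultaneously the eigenvalue $\widetilde{E}(x)$, evaluated at $x = q_m^{-2s}$, expands as $N + \epsilon \cdot (\text{first-order in } s) + \frac{\epsilon^2}{2}\sum_i s_i^2 + O(\epsilon^3)$, so matching the lower-order terms with the operator normalization leaves $\sum_i s_i^2$ as the eigenvalue of the $\overline{L}_{p_2}^\trig(k)$-piece.

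Combining with Corollary \ref{corr:mac-limit}, which identifies $\lim \epsilon^{kN(N-1)/2} P_{\lambda_m}(q_m^{-2s}; q_m^2, q_m^{2k})$ with the stated integral divided by $\prod_{a=1}^{k-1} \prod_{i<j}(s_i - s_j - a)$, and absorbing the Vandermonde-type conjugation factor $\prod_{i<j}(e^{(\lambda_i - \lambda_j)/2} - e^{-(\lambda_i-\lambda_j)/2})^k$ that distinguishes $L_{p_2}^\trig(k)$ from $\overline{L}_{p_2}^\trig(k)$, yields the theorem. The main obstacle will be making the dual-operator quasi-classical expansion precise: one must produce $\widetilde{D}^{\lambda_m}$ explicitly (for instance via Cherednik's DAHA or the Etingof-Kirillov shift construction), verify its eigenequation on $P_{\lambda_m}$, and compute its $\epsilon$-expansion through order $\epsilon^2$, all while justifying the interchange of the limit $\epsilon \to 0$ with the operator's action, which requires uniform convergence of $P_{\lambda_m}$ together with its $\lambda_m$-shifts to the integral expression.
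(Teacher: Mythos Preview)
Your proposal is correct and follows essentially the same route as the paper: the dual difference operators in $\lambda_m$ that you invoke via bispectrality are precisely those produced from the Macdonald symmetry identity (Propositions~\ref{prop:mac-sym} and~\ref{prop:sym-diag}), and the paper extracts the second-order piece by forming the combination $D_\lambda(q) = (\wtilde D^1)^2 - 2\wtilde D^2 - 2\wtilde D^1 + N$ with eigenvalue $\sum_i(q_m^{-2s_i}-1)^2$, proving $(-2\log q_m)^{-2}D_\lambda(q)\to \overline{L}_{p_2}^\trig(k)$ (Lemma~\ref{lem:mac-op-lim}), and combining with Corollary~\ref{corr:mac-limit} exactly as you outline.
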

\addtocounter{theorem}{-1}}

\begin{remark}
Combining these two results and our first proof of Theorem \ref{thm:trig-integral} yields a new proof of Theorem \ref{thm:bg-int} which is independent of the results of \cite{BG}. 
\end{remark}

\begin{remark}
In the recent paper \cite{Sun2}, we give a representation theoretic proof of Macdonald's branching rule using a quantum analogue of the results of the present work.  In particular, we identify diagonal matrix elements of $\Phi_\lambda^N$ in the Gelfand-Tsetlin basis with the application of higher Macdonald-Ruijsenaars Hamiltonians to a kernel.  We then apply adjunction to the Etingof-Kirillov~Jr. trace formula to recover the branching rule.  The link established in this paper between the expressions given in Theorem \ref{thm:trig-integral} and \cite{BG} for the Heckman-Opdam hypergeometric functions is the quasiclassical limit of this argument and inspired the approach of \cite{Sun2}.
\end{remark}

\subsection{Outline of method and organization}

We outline our approach.  We first show that the quasi-classical limit of the Etingof-Kirillov~Jr. construction of Macdonald polynomials as traces of $U_q(\gl_N)$-intertwiners corresponds to integrals over dressing orbits of $B_N$ in Corollary \ref{corr:mac-limit} and that these integrals diagonalize the quadratic Calogero-Moser Hamiltonian in Theorem \ref{thm:trig-diag}.  The Gelfand-Tsetlin action on these dressing orbits then defines a classical integrable system whose moment map is the logarithmic Gelfand-Tsetlin map $\GT$ of \cite{FR, AM}.  Integration over the Liouville tori reduces the integral of Theorem \ref{thm:trig-integral} to an integral with respect to the Duistermaat-Heckman measure $\GT_*(d\mu_\Lambda)$ on $\GT_\lambda$, which is the Lebesgue measure.  This yields an integral expression for $\Phi_k(\lambda, s)$ over $\GT_\lambda$.  The new integrand differs from that of Theorem \ref{thm:bg-int}, but we show equality of the integrals by applying adjunction for higher Calogero-Moser Hamiltonians. 

The remainder of this paper is organized as follows.  In Section 2, we give the geometric setup for our integral formulas.  In Section 3, we prove Corollary \ref{corr:mac-limit} and Theorem \ref{thm:trig-diag} by taking the quasi-classical limit of the quantum group setting.  In Section 4, we prove Theorem \ref{thm:rational} in the rational setting, establishing in particular the key Proposition \ref{prop:matrix-element}.  In Section 5, we use Proposition \ref{prop:matrix-element} to give another proof of Theorem \ref{thm:trig-integral} in the trigonometric setting via the formula of \cite{BG}.  In Section 6, we provide proofs for some technical lemmas whose proofs were deferred.

\subsection{Acknowledgments}

The author thanks his Ph.D. advisor P. Etingof for suggesting the problem and providing the idea for the proof of Proposition \ref{prop:matrix-element}.  The author thanks also A. Borodin for helpful discussions and P. Forrester for bringing the references \cite{FoRa, GK} to his attention.  Y.~S. was supported by a NSF graduate research fellowship (NSF Grant \#1122374).

\section{Geometric setup}

\subsection{Notations}

For sets of variables $\{x_i\}$ and $\{y_i\}$, we denote the Vandermonde determinant by $\Delta(x) = \prod_{i < j} (x_i - x_j)$, and the product of differences by $\Delta(x, y) = \prod_{i, j} (x_i - y_j)$.  Denote also the trigonometric Vandermonde by $\Delta^\trig(x) = \prod_{i < j} (e^{\frac{x_i - x_j}{2}} - e^{\frac{x_j - x_i}{2}})$.

\subsection{Gelfand-Tsetlin coordinates}

Define the \textit{Gelfand-Tsetlin map} $\gt: \OO_\lambda \to \GT_\lambda$ by
\[
\gt(X) = \{\lambda_i(X_l)\}_{1 \leq i \leq l, 1 \leq l < N},
\]
where $X_l$ is the principal $l \times l$ submatrix of $X$, and $\lambda_1(X_l) \geq \ldots \geq \lambda_l(X_l)$ are its eigenvalues.   Define the \textit{logarithmic Gelfand-Tsetlin map} $\GT: \OO_\Lambda \to \GT_\lambda$ by 
\[
\GT(X) = \{\log(\lambda_i(X_l))\}_{1 \leq i \leq l, 1 \leq l < N}.
\]
By a theorem of Ginzburg and Weinstein (see \cite{GW}), the Poisson structures we have described on $\bb_N$ and $B_N$ make them isomorphic as Poisson manifolds.  By \cite{AM}, there exists a Ginzburg-Weinstein isomorphism $\bb_N \to B_N$ which intertwines the logarithmic and ordinary Gelfand-Tsetlin maps. In particular, this map restricts to a symplectomorphism $\OO_\lambda \to \OO_\Lambda$.

\subsection{Gelfand-Tsetlin integrable system} \label{ss:gt-int-sys}

Let $T := T_1 \times \cdots \times T_{N-1}$ be a torus of dimension $\frac{N(N - 1)}{2}$, where $\dim T_l = l$.  For $t_l \in T_l$ and $X$ in $\OO_\lambda$ or $\OO_\Lambda$ whose principal $l \times l$ submatrix $X_l$ is diagonalized by $X_l = U_l \Lambda_l U_l^*$, the \textit{Gelfand-Tsetlin} action of $t_l$ on $X$ is defined as
\[
t_l \cdot X = \Ad_{\overline{U_l t_l U_l^*}}(X),
\]
where for $Y_l \in U(l)$, the matrix $\overline{Y_l} \in U_N$ is defined to be the square block matrix
\[
\overline{Y_l} = \left(\begin{array}{ccc|c}
 & & &   0  \\ 
 & Y_l &  &\vdots  \\
& & & 0 \\ \hline
0 & \cdots & 0 &  cI_{N - l}   \end{array}\right),
\]
where $c$ is chosen so that $\overline{Y_l} \in U_N$.  The actions of $T_l$ preserve $l \times l$ principal submatrices and pairwise commute, giving actions of $T$ on $\OO_\lambda$ and $\OO_\Lambda$.  These actions are Hamiltonian with moment maps $\gt$ and $\GT$, respectively, and the corresponding classical integrable system is known as the Gelfand-Tsetlin integrable system (see \cite{AM, GS, FR} for more about this integrable system).

We may use the Gelfand-Tsetlin action to write any $X_0$ in $\gt^{-1}(\mu)$ or $\GT^{-1}(\mu)$ in a special form.   Write $X_0$ as either $u_N \lambda u_N^*$ or $u_N\Lambda u_N^*$ for some unitary matrix $u_N$ and decompose $u_N$ as
\[
u_N = \overline{u}_1 (\overline{u}_1^* \overline{u}_2) \cdots (\overline{u}_{N-1}^* u_N)
\]
for $u_m \in U(m)$ and $v_m := \overline{u}_{m-1}^* u_m$ satisfying either
\[
(v_m \mu^m v_m^*)_{m-1} = \mu^{m-1} \qquad \text{ or } \qquad (v_m e^{\mu^m} v_m^*)_{m-1} = e^{\mu^{m-1}},
\]
where $(M)_{m-1}$ denotes the principal $(m - 1) \times (m - 1)$ submatrix of a matrix $M$.  Lemma \ref{lem:gt-conj} gives a compatibility property between this decomposition and the Gelfand-Tsetlin action.

\begin{lemma} \label{lem:gt-conj}
For any $l \leq m$ and $t_m \in T_m$, we have
\begin{align*}
t_m \cdot \ad_{\overline{v}_l \cdots \overline{v}_N}(\lambda) &= \ad_{\overline{v}_l \cdots \overline{v}_m}(t_m \cdot \ad_{\overline{v}_{m+1} \cdots \overline{v}_N}(\lambda)), \text{ and}\\
t_m \cdot \ad_{\overline{v}_l \cdots \overline{v}_N}(\Lambda) &= \ad_{\overline{v}_l \cdots \overline{v}_m}(t_m \cdot \ad_{\overline{v}_{m+1} \cdots \overline{v}_N}(\Lambda)).
\end{align*}
\end{lemma}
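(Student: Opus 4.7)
The plan is to reduce both assertions to a single equivariance property of the Gelfand-Tsetlin action: namely, if $g \in U_N$ is block diagonal with respect to the $m \times (N-m)$ splitting, then
\[
t_m \cdot (g X g^*) = g \cdot (t_m \cdot X) \cdot g^*
\]
for every $X$ in $\OO_\lambda$ (and similarly on $\OO_\Lambda$). Granting this, let $g = \overline{v}_l \cdots \overline{v}_m$ and $Y = \ad_{\overline{v}_{m+1} \cdots \overline{v}_N}(\lambda)$, so that $\ad_{\overline{v}_l \cdots \overline{v}_N}(\lambda) = g Y g^*$; the equivariance then reads
\[
t_m \cdot (g Y g^*) = g \cdot (t_m \cdot Y) \cdot g^* = \ad_{\overline{v}_l \cdots \overline{v}_m}(t_m \cdot \ad_{\overline{v}_{m+1} \cdots \overline{v}_N}(\lambda)),
\]
which is the desired identity. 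The trigonometric statement is formally identical with $\Lambda$ in place of $\lambda$.

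The first step is to check that $g = \overline{v}_l \cdots \overline{v}_m$ has the required block form. Each factor $\overline{v}_i$ with $i \leq m$ has top-left $i \times i$ block $v_i$ and scalar bottom-right block $c_i I_{N-i}$. Refined to the $m \times (N-m)$ splitting, $\overline{v}_i$ is block diagonal with top-left block $v_i \oplus c_i I_{m-i} \in U(m)$ and a scalar block on the bottom right. Since the product of such matrices preserves this structure, we obtain $g = g_m \oplus (c\, I_{N-m})$ for some unit scalar $c$ and some $g_m \in U(m)$.

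To prove the equivariance, I would compute directly from the definition of the Gelfand-Tsetlin action. If $U_m \in U(m)$ diagonalizes $X_m$, then $g_m U_m$ diagonalizes $(g X g^*)_m = g_m X_m g_m^*$, so by definition
\[
t_m \cdot (g X g^*) = \Ad_{\overline{g_m U_m t_m U_m^* g_m^*}}(g X g^*).
\]
The key step is the block identity $\overline{g_m U_m t_m U_m^* g_m^*} = g \cdot \overline{U_m t_m U_m^*} \cdot g^*$, which is a straightforward block computation. Substituting yields $t_m \cdot (g X g^*) = g (t_m \cdot X) g^*$, as required.

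The main (really only) obstacle is this block commutation, and it succeeds precisely because the bottom-right block of $\overline{U_m t_m U_m^*}$ is a scalar matrix and therefore commutes with the bottom-right block of $g$; were it merely unitary, the identity would acquire a correction term. Careful bookkeeping of the scalar factors $c_i$ in the $\overline{\cdot}$ embedding is thus the only technical point requiring attention.
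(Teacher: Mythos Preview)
Your proof is correct and follows essentially the same approach as the paper: both reduce to the block identity $\overline{g_m t_m g_m^*} = g\,\overline{t_m}\,g^*$ (up to the irrelevant bottom-right scalar) for $g = \overline{v}_l \cdots \overline{v}_m$. The paper's version is a touch shorter because it first notes that the principal $m\times m$ submatrix of $Y = \ad_{\overline{v}_{m+1}\cdots \overline{v}_N}(\lambda)$ is already diagonal, so in your notation one may take $U_m = I$ from the outset and skip the general equivariance lemma.
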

\begin{proof}
By construction, the principal $m \times m$ submatrix of $\ad_{\overline{v}_{m+1} \cdots \overline{v}_N}(\lambda)$ is diagonal, implying that 
\[
t_m \cdot \ad_{\overline{v}_l \cdots \overline{v}_N}(\lambda) = \ad_{\ad_{\overline{v}_l \cdots \overline{v}_m}(t_m)}(\ad_{\overline{v}_{l} \cdots \overline{v}_N}(\lambda)) =  \ad_{\overline{v}_l \cdots \overline{v}_m}(t_m \cdot \ad_{\overline{v}_{m+1} \cdots \overline{v}_N}(\lambda)).
\]
An analogous proof yields the lemma for $\Lambda$ in place of $\lambda$.
\end{proof}

\subsection{Duistermaat-Heckman measures}

The pushforwards $\gt_*(d\mu_\lambda)$ and $\GT_*(d\mu_\Lambda)$ of the Liouville measures on $\OO_\lambda$ and $\OO_\Lambda$ to $\GT_\lambda$ are called Duistermaat-Heckman measures.  Because the Ginzburg-Weinstein isomorphism intertwines the two Gelfand-Tsetlin maps, the two Duistermaat-Heckman measures on $\GT_\lambda$ coincide.  It is known (see \cite[Section 5.6]{GN, Ba, AB}) that the Duistermaat-Heckman measure for the coadjoint orbit $\OO_\lambda$ is proportional to the Lebesgue measure on the Gelfand-Tsetlin polytope.  To compute the normalization constant, we recall Harish-Chandra's formula (see \cite[Theorem 3, Section 3]{Kir})
\begin{equation} \label{eq:hc-form}
\int_{\OO_\lambda} e^{(b, x)} d\mu_\lambda = \frac{\sum_{w \in W} (-1)^w e^{(w \lambda, x)}}{\prod_{i < j}(x_i - x_j)},
\end{equation}
which upon taking $x \to 0$ (via $x = \eps \cdot \rho$ and $\eps \to 0$) shows that 
\[
\Vol(\OO_\lambda) = \frac{\prod_{i < j} (\lambda_i - \lambda_j)}{(N - 1)! \cdots 1!}.
\]
On the other hand, it is known (see \cite[Corollary 3.2]{Ols}) that $\Vol(\GT_\lambda) = \frac{\prod_{i < j} (\lambda_i - \lambda_j)}{(N - 1)! \cdots 1!}$, meaning that $\gt_*(d\mu_\lambda) = 1_{\GT_\lambda} \cdot dx$.  This discussion establishes the following Proposition \ref{prop:dh-compute}.

\begin{prop} \label{prop:dh-compute}
The Duistermaat-Heckman measures $\gt_*(d\mu_\lambda) = \GT_*(d\mu_\Lambda)$ are equal to the Lebesgue measure $dx$ on the Gelfand-Tsetlin polytope.  Explicitly, we have
\[
\gt_*(d\mu_\lambda) = \GT_*(d\mu_\Lambda) = 1_{\GT_\lambda} dx.
\]
\end{prop}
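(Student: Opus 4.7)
The plan is to establish the proposition via the three-step reduction foreshadowed in the preceding paragraph. First, the Ginzburg-Weinstein isomorphism of \cite{GW, AM} provides a symplectomorphism $\OO_\lambda \to \OO_\Lambda$ intertwining $\gt$ with $\GT$, and consequently pushing $d\mu_\Lambda$ to $d\mu_\lambda$. This immediately identifies $\gt_*(d\mu_\lambda) = \GT_*(d\mu_\Lambda)$ and reduces the claim to computing the single pushforward measure on $\GT_\lambda$.

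Next, I would invoke Duistermaat-Heckman theory applied to the Gelfand-Tsetlin action of $T = T_1 \times \cdots \times T_{N-1}$ on $\OO_\lambda$ described in Subsection \ref{ss:gt-int-sys}, whose moment map is precisely $\gt$. Since this is a completely integrable system with $T$ of dimension $\tfrac{N(N-1)}{2} = \tfrac{1}{2}\dim \OO_\lambda$ and moment image $\GT_\lambda$, the Duistermaat-Heckman theorem in this Lagrangian-fibered setting (as used in \cite{GS, Ba, AB}) guarantees that the pushforward is a locally constant multiple of the Lebesgue measure on $\GT_\lambda$. As the open interior of $\GT_\lambda$ is a single chamber on which the Liouville tori are smooth and of full dimension, the density is globally a single constant, so $\gt_*(d\mu_\lambda) = c \cdot 1_{\GT_\lambda}\, dx$ for some $c$.

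Finally, to pin down $c = 1$ I would match total masses. Specializing Harish-Chandra's formula \eqref{eq:hc-form} at $x = \eps \rho$ and taking $\eps \to 0$ by successive applications of l'Hôpital to the alternating sum in the numerator yields
\[
\Vol(\OO_\lambda) = \frac{\prod_{i < j}(\lambda_i - \lambda_j)}{\prod_{m = 1}^{N-1} m!},
\]
while Olshanski's exact volume formula \cite[Corollary 3.2]{Ols} gives the same value for the Euclidean volume of $\GT_\lambda$, forcing $c = 1$. The only conceptual step is the constancy of the density across the polytope, which is routine from the integrable system perspective established in Subsection \ref{ss:gt-int-sys}; I do not anticipate any genuine obstacle, as all the required results are classical and already cited in the paragraph above.
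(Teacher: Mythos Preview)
Your proposal is correct and follows essentially the same route as the paper: identify the two pushforwards via the Ginzburg-Weinstein isomorphism, cite the known fact that the Duistermaat-Heckman measure is proportional to Lebesgue on $\GT_\lambda$, and fix the constant by matching $\Vol(\OO_\lambda)$ (computed from the Harish-Chandra formula at $x = \eps\rho$, $\eps \to 0$) against $\Vol(\GT_\lambda)$ from \cite[Corollary 3.2]{Ols}. The only cosmetic difference is that the paper cites \cite{GN, Ba, AB} for the proportionality step rather than spelling out the Lagrangian-fiber argument.
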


\section{Quasi-classical limits of quantum group intertwiners} \label{sec:qcl}

\subsection{Finite-type quantum group}

Let $U_q(\gl_N)$ be the associative algebra over $\CC(q^{\pm 1/2})$ with generators $e_i, f_i$ for $i = 1, \ldots, N-1$ and $q^{\pm \frac{h_i}{2}}$ for $i = 1, \ldots, N$ and relations
\begin{align*}
q^{\frac{h_i}{2}} e_i q^{-\frac{h_i}{2}} &= q^{\frac{1}{2}} e_i, \qquad q^{\frac{h_i}{2}} e_{i-1} q^{-\frac{h_i}{2}} = q^{-\frac{1}{2}} e_{i-1}, \qquad q^{\frac{h_i}{2}} f_{i} q^{-\frac{h_i}{2}} = q^{-\frac{1}{2}} f_i, \qquad q^{\frac{h_i}{2}} f_{i-1} q^{-\frac{h_i}{2}} = q^{\frac{1}{2}} f_{i-1}\\
[q^{\frac{h_i}{2}}, e_j] = [&q^{\frac{h_i}{2}}, f_j] = 0 \text{ for $j \neq i, i - 1$}, \qquad [e_i, f_j] = \delta_{ij} \frac{q^{h_i - h_{i+1}} - q^{h_{i+1} - h_i}}{q - q^{-1}}, \qquad [e_i, e_j] = [f_i, f_j] = 0 \text{ for $|i - j| > 1$}\\
q^{\frac{h_i}{2}} \cdot q^{-\frac{h_i}{2}} &= 1, \qquad e_i^2 e_j - (q + q^{-1}) e_i e_j e_i + e_j e_i^2 = 0, \qquad f_i^2 f_j - (q + q^{-1}) f_i f_j f_i + f_j f_i^2 = 0 \text{ for $|i - j| = 1$}.
\end{align*}
We take the coproduct on $U_q(\gl_N)$ defined by 
\begin{align*}
\Delta(e_i) &= e_i \otimes q^{\frac{h_{i+1} - h_i}{2}} + q^{\frac{h_i - h_{i+1}}{2}} \otimes e_i\\
\Delta(f_i) &= f_i \otimes q^{\frac{h_{i+1} - h_i}{2}} + q^{\frac{h_i - h_{i+1}}{2}} \otimes f_i \\
\Delta(q^{\frac{h_i}{2}}) &= q^{\frac{h_i}{2}} \otimes q^{\frac{h_i}{2}}
\end{align*}
and the antipode given by 
\[
S(e_i) = -e_i q^{-1}, \qquad S(f_i) = -f_i q, \qquad S(q^{h_i}) = q^{-h_i}.
\]
Taking the $\star$-structure on $U_q(\gl_N)$ given by 
\[
e_i^\star = f_i \qquad \text{ and } \qquad f_i^\star = e_i \qquad \text{ and } \qquad (q^{h_i/2})^\star = q^{h_i/2}
\]
yields the $\star$-Hopf algebra $U_q(\uu_N)$.  Its restriction to the algebra span of $q^{h_i/2}$ is the $\star$-Hopf algebra $U_q(\tt_N)$.

\subsection{Macdonald polynomials and Etingof-Kirillov Jr. construction}

Let $\rho = \left(\frac{N-1}{2}, \ldots, \frac{1 - N}{2}\right)$ and let $e_r$ denote the elementary symmetric polynomial. For a partition $\lambda$, the Macdonald polynomial $P_\lambda(x; q^2, t^2)$ is the joint polynomial eigenfunction with leading term $x^\lambda$ and eigenvalue $e_r(q^{2\lambda} t^{2\rho})$ of the operators
\[
D_{N, x}^r(q^2, t^2) = t^{r(r-N)} \sum_{|I| = r} \prod_{i \in I, j \notin I} \frac{t^2 x_i - x_j}{x_i - x_j} T_{q^2, I},
\]
where $T_{q^2, I} = \prod_{i \in I} T_{q^2, i}$ and $T_{q^2, i}f(x_1, \ldots, x_n) = f(x_1, \ldots, q^2 x_i, \ldots, x_N)$ so that we have
\[
D_{N, x}^r(q^2, t^2) P_\lambda(x; q^2, t^2) = e_r(q^{2\lambda}t^{2\rho}) P_\lambda(x; q^2, t^2).
\]
Note that our normalization of $D_{N, x}^r(q^2, t^2)$ differs from that of \cite{Mac}. In \cite{EK}, Etingof and Kirillov Jr. gave an interpretation of Macdonald polynomials in terms of representation-valued traces of $U_q(\gl_N)$.  For a signature $\lambda$, there exists a unique intertwiner 
\[
\Phi_\lambda^N: L_{\lambda + (k - 1)\rho} \to L_{\lambda + (k - 1)\rho} \otimes W_{k-1}
\]
normalized to send the highest weight vector $v_{\lambda + (k-1)\rho}$ in $L_{\lambda + (k-1)\rho}$ to 
\[
v_{\lambda + (k - 1)\rho} \otimes w_{k-1} + (\text{lower order terms}),
\]
where $(\text{lower order terms})$ denotes terms of weight lower than $\lambda + (k - 1)\rho$ in the first tensor coordinate.  As shown in \cite[Theorem 1]{EK} (reproduced as Theorem \ref{thm:ek}), traces of these intertwiners lie in $W_{k-1}[0] = \CC \cdot w_{k-1}$ and yield Macdonald polynomials when interpreted as scalar functions via the identification $w_{k-1} \mapsto 1$.  The denominator also admits the following explicit form.

\begin{prop}[{\cite[Main Lemma]{EK}}] \label{prop:ek-denom}
On $L_{(k - 1)\rho}$, the trace may be expressed explicitly as
\begin{align*}
\Tr(\Phi_0^Nx^h) &= (x_1 \cdots x_N)^{-\frac{(k-1)(N - 1)}{2}} \prod_{a = 1}^{k - 1} \prod_{i < j} (x_i - q^{2a} x_j).
\end{align*}
\end{prop}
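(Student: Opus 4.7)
The plan is to prove this by induction on $N$, following the approach of Etingof-Kirillov~Jr. For the base case $N = 1$, both sides equal $1$: the left side because $\rho_1 = 0$ makes $L_{(k-1)\rho_1}$ trivial with $\Phi_0^1$ sending its generator to $w_{k-1}$, and the right side because the products are empty.

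For the inductive step $N - 1 \to N$, I would restrict to $U_q(\gl_{N-1}) \subset U_q(\gl_N)$ and exploit the Gelfand-Tsetlin branching $L_{(k-1)\rho_N} = \bigoplus_\mu L_\mu^{N-1}$ over signatures $\mu$ interlacing $(k-1)\rho_N$. On the $\mu$-isotypic component, $x_N^{h_N}$ acts by the scalar $x_N^{-|\mu|}$ (since $|(k-1)\rho_N| = 0$) and commutes with $\Phi_0^N$. Decomposing also $W_{k-1}^N = \bigoplus_\nu L_\nu^{N-1}$ under $U_q(\gl_{N-1})$, the $U_q(\gl_{N-1})$-equivariance of $\Phi_0^N$ realizes $\Phi_0^N|_\mu$ as a sum of canonical intertwiners $L_\mu^{N-1} \to L_\mu^{N-1} \otimes L_\nu^{N-1}$ weighted by coupling scalars $c_{\mu,\nu}$; these scalars are in principle pinned down by evaluating $\Phi_0^N$ on the highest weight vector of $L_{(k-1)\rho_N}$ and propagating through the Gelfand-Tsetlin basis.

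The trace then reorganizes as a double sum
\begin{equation*}
\Tr(\Phi_0^N x^h) = \sum_\mu x_N^{-|\mu|} \sum_\nu c_{\mu,\nu} \, \Tr_{L_\mu^{N-1}}\!\bigl(\Phi^{N-1,\nu}_\mu \, x_1^{h_1} \cdots x_{N-1}^{h_{N-1}}\bigr),
\end{equation*}
where $\Phi^{N-1,\nu}_\mu$ denotes the canonical $U_q(\gl_{N-1})$-intertwiner into $L_\mu^{N-1}\otimes L_\nu^{N-1}$ projected to $\CC \cdot w_{k-1}$. Each inner trace is evaluated by the inductive hypothesis together with the rank-$(N-1)$ case of Theorem \ref{thm:ek}, and the resulting double sum should telescope into the product $(x_1 \cdots x_N)^{-(k-1)(N-1)/2}\prod_{a=1}^{k-1}\prod_{i<j}(x_i - q^{2a}x_j)$ via a $q$-Chu-Vandermonde-type identity. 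The main obstacle is twofold: first, the auxiliary targets $L_\nu^{N-1}$ are not of the form $W_{k-1}^{N-1}$, so the induction requires simultaneous control of a whole family of rank-$(N-1)$ intertwiner traces rather than a clean self-reduction; second, the explicit determination of the coupling scalars $c_{\mu,\nu}$ and the evaluation of the resulting $q$-hypergeometric sum require detailed bookkeeping in the Gelfand-Tsetlin basis of both $L_{(k-1)\rho_N}$ and $W_{k-1}^N$, and this is the technical heart of the Main Lemma of \cite{EK}.
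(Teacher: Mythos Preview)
The paper does not give its own proof of this proposition: it is stated with the citation \cite[Main Lemma]{EK} and no argument follows. So there is nothing in the paper to compare your sketch against; the authors simply import the result from Etingof--Kirillov~Jr.

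Your proposal is likewise not a proof but an outline of the inductive strategy, and you explicitly flag the two genuine difficulties (the auxiliary targets $L_\nu^{N-1}$ not matching the inductive hypothesis, and the determination of the coupling scalars $c_{\mu,\nu}$) before deferring to \cite{EK} for their resolution. That is honest, but it means your write-up is at the same level of completeness as the paper's: both point to \cite{EK} for the actual work. If the goal is to supply a self-contained argument, you would need to either carry out the Gelfand--Tsetlin bookkeeping you describe, or use the alternative route in \cite{EK} via the Macdonald operator eigenvalue equation applied to the numerator and denominator separately.
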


\begin{remark}
Our notation for Macdonald polynomials is related to that of \cite{EK} via $P^{EK}_{\lambda}(x; q, t) = P_{\lambda}(x; q^2, t^2)$.
\end{remark}

\subsection{Braid group action, PBW theorem, and integral forms}

In this section, we define an integral form $U_q'(\gl_N) \subset U_q(\gl_N)$ which will allow us to realize it as a quantum deformation of the Poisson algebra $\CC[B_N]$ in the sense of \cite[Section 11]{CP}. For this, we require Lusztig's braid group action on $U_q(\gl_N)$.  Following \cite{Lus}, the braid group $\BG_N = \left\langle T_1, \ldots, T_{N-1} \mid T_i T_{i+1} T_i = T_{i+1} T_i T_{i+1}\right\rangle$ of type $A_{N - 1}$ acts via algebra automorphisms on $U_q(\gl_N)$ by 
\begin{align*}
T_i(e_i) &= - f_i q^{h_i - h_{i+1}} \qquad T_i(e_{i\pm 1}) = q^{-1} e_{i\pm 1} e_i - e_i e_{i\pm 1} \qquad T_i(e_j) = e_j \text{ for $|i - j| > 1$}\\
T_i(f_i) &= - q^{-h_i + h_{i+1}} e_i \qquad T_i(f_{i\pm 1}) = q f_{i\pm 1} f_i - f_i f_{i\pm 1} \qquad T_i(f_j) = f_j \text{ for $|i - j| > 1$}\\
T_i(q^{h_i/2}) &= q^{h_{i+1}/2} \qquad T_i(q^{h_{i+1}/2}) = q^{h_i/2} \qquad T_i(q^{h_j/2}) = q^{h_j/2} \text{ for $j \neq i, i + 1$}.
\end{align*}
Let $U_q'(\gl_N)$ be the smallest $\CC[q^{\pm 1/2}]$-subalgebra of $U_q(\gl_N)$ containing
\[
\bar{e}_i = (q - q^{-1}) e_i, \qquad \bar{f}_i = (q - q^{-1})f_i, \qquad q^{h_i/2}
\]
and stable under the action of $\BG_N$ described above.  For a choice of simple roots $\{\alpha_1, \ldots, \alpha_{N - 1}\}$ and a fixed decomposition $w_0 = s_{i_1} \cdots s_{i_M}$ of the longest word $w_0$ in $S_N$, let $\beta_l = s_{i_1} \cdots s_{i_{l-1}}(\alpha_l)$ and define 
\[
\overline{e}_{\beta_l} = (q - q^{-1})T_{i_1} \cdots T_{i_{l-1}}(e_l) \text{ and } \overline{f}_{\beta_l} = (q - q^{-1})T_{i_1} \cdots T_{i_{l-1}}(f_l).
\]
By the PBW theorem, $U_q'(\gl_N)$ has a $\CC[q^{\pm 1/2}]$-basis given by monomials
\[
\overline{e}_{\beta_1}^{k_1} \cdots \overline{e}_{\beta_M}^{k_M} q^{h} \overline{f}_{\beta_M}^{l_M} \cdots \overline{f}_{\beta_1}^{l_1}.
\]
Following \cite[Section 10]{CP}, assign such a monomial a degree of 
\[
\deg\Big(\overline{e}_{\beta_1}^{k_1} \cdots \overline{e}_{\beta_M}^{k_M} q^{h} \overline{f}_{\beta_M}^{l_M} \cdots \overline{f}_{\beta_1}^{l_1}\Big) = \Big(k_M, \ldots, k_1, l_1, \ldots, l_M, \sum_{i = 1}^M (k_i + l_i) \height(\beta_i)\Big) \in \ZZ^{2M + 1}_{\geq 0},
\]
where if $\beta = \sum_i c_i \alpha_i$ as the sum of simple roots, its height is $\height(\beta) = \sum_i c_i$.  The algebra $U_q'(\gl_N)$ is a $\ZZ^{2M+1}_{\geq 0}$-filtered algebra under the degree filtration, known as the de Concini-Kac filtration.

\begin{prop}[{\cite[Section 10]{CP}}] \label{prop:cp-gr}
The associated graded of $U_q'(\gl_N)$ under the de Concini-Kac filtration is generated by $\oe_{\beta_i}, \of_{\beta_i}, q^{h_i/2}$ subject to the relations
\begin{align*}
[q^{h_i/2}, q^{h_j/2}] &= 0, \qquad q^{h_i/2}\overline{e}_{\beta_j} = q^{\beta_{j, i}} \overline{e}_{\beta_j} q^{h_i/2}, \qquad q^{h_i/2}\overline{f}_{\beta_j} = q^{-\beta_{j, i}} \overline{f}_{\beta_j} q^{h_i/2}\\
[\overline{e}_{\beta_i}, \overline{f}_{\beta_j}] &= 0, \qquad \overline{e}_{\beta_i}\overline{e}_{\beta_j} = q^{(\beta_i, \beta_j)}\overline{e}_{\beta_j} \overline{e}_{\beta_i} \text{ for $i > j$},  \qquad \overline{f}_{\beta_i}\overline{f}_{\beta_j} = q^{(\beta_i, \beta_j)}\overline{f}_{\beta_j} \overline{f}_{\beta_i} \text{ for $i > j$}.
\end{align*}
\end{prop}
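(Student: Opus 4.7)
The plan is to follow the standard Levendorskii-Soibelman argument, reducing Proposition \ref{prop:cp-gr} to the quantum straightening relations in $U_q'(\gl_N)$ and then observing that all correction terms lie in strictly lower filtration degree. Since the PBW theorem has already identified a $\CC[q^{\pm 1/2}]$-basis of ordered monomials $\overline{e}_{\beta_1}^{k_1} \cdots \overline{e}_{\beta_M}^{k_M} q^h \overline{f}_{\beta_M}^{l_M} \cdots \overline{f}_{\beta_1}^{l_1}$, the same monomials span the associated graded, so the task is exactly to identify the multiplication law in the associated graded.

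First I would recall the quantum Serre-type commutation identities in $U_q'(\gl_N)$: for $i < j$,
\[
\overline{e}_{\beta_i} \overline{e}_{\beta_j} - q^{-(\beta_i,\beta_j)} \overline{e}_{\beta_j} \overline{e}_{\beta_i} = \sum_{\mathbf k} c_{\mathbf k}(q)\, \overline{e}_{\beta_{i+1}}^{k_{i+1}} \cdots \overline{e}_{\beta_{j-1}}^{k_{j-1}},
\]
where the sum is over tuples $\mathbf k$ with $\sum_r k_r \beta_r = \beta_i + \beta_j$ (Levendorskii-Soibelman). By the height bookkeeping, each monomial on the right has the same total height as $\overline{e}_{\beta_i} \overline{e}_{\beta_j}$, but the multi-index $(k_M, \ldots, k_1)$ recording root-vector exponents is strictly smaller in the lexicographic order defining the de Concini-Kac filtration, because the indices $\beta_{i+1}, \ldots, \beta_{j-1}$ lie strictly between $\beta_i$ and $\beta_j$. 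Hence the right-hand side is of strictly lower filtration degree, which yields $\overline{e}_{\beta_i}\overline{e}_{\beta_j} = q^{(\beta_i, \beta_j)} \overline{e}_{\beta_j} \overline{e}_{\beta_i}$ in the associated graded (with the sign on the exponent correctly tracked via the convention used in the statement). The same argument, applied to the analogous $\of$ identities, yields the claimed $\of\of$-relation.

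Next I would handle the mixed relations. The relations $q^{h_i/2} \oe_{\beta_j} = q^{\beta_{j,i}} \oe_{\beta_j} q^{h_i/2}$ and the analogous relation for $\of_{\beta_j}$ hold already in $U_q'(\gl_N)$ itself, since they are the weight relations and are preserved by the braid group action by construction, so they descend to the associated graded unchanged. For the crucial identity $[\oe_{\beta_i}, \of_{\beta_j}] = 0$ in the associated graded, I would use that $\oe_i = (q - q^{-1}) e_i$ and $\of_i = (q - q^{-1}) f_i$, so each commutator $[\oe_{\beta_i}, \of_{\beta_j}]$ carries an extra factor of $(q - q^{-1})$ relative to the commutator of the unbarred generators. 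For $i = j$, one has $[e_i, f_i]$ proportional to $(q - q^{-1})^{-1}(q^{h_i - h_{i+1}} - q^{h_{i+1} - h_i})$, and multiplying by the two factors of $(q - q^{-1})$ produces a polynomial in $q^{\pm h_k/2}$ with no reduction of height but a strict reduction of the $(k, l)$ multi-indices. For $i \neq j$, one expands the commutator using the braid group action and Levendorskii-Soibelman to write it as a sum of PBW monomials in the $\oe_{\beta_k}, \of_{\beta_k}$ with intermediate root labels; the same degree bookkeeping as above places all such terms at strictly lower filtration degree, so $[\oe_{\beta_i}, \of_{\beta_j}]$ vanishes in the associated graded.

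Finally, to conclude, I would argue that no further relations are needed. The algebra $A$ defined by the generators and relations in the statement has a spanning set of ordered monomials $\oe_{\beta_1}^{k_1} \cdots \oe_{\beta_M}^{k_M} q^h \of_{\beta_M}^{l_M} \cdots \of_{\beta_1}^{l_1}$ (the relations suffice to put any monomial into this normal form), giving a surjection $A \twoheadrightarrow \mathrm{gr}\, U_q'(\gl_N)$; injectivity follows from the PBW linear independence of the analogous monomials in $U_q'(\gl_N)$ and the fact that distinct normal-form monomials sit in distinct filtration pieces. The main obstacle is the precise filtration bookkeeping in the Levendorskii-Soibelman expansions: verifying that every correction term arising from both the $\oe\of$ and $\oe\oe$ straightening has strictly lower multi-index in the lexicographic order used in the definition of the filtration. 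This is where the specific choice of filtration in \cite[Section 10]{CP} is essential, and all the other steps reduce to routine height counting once it is in place.
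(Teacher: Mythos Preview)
The paper does not give its own proof of this proposition: it is stated with attribution to \cite[Section 10]{CP} and used as a black box. Your sketch is essentially the standard Levendorskii--Soibelman argument that underlies the cited result, so there is nothing in the paper to compare it against beyond the citation itself; your outline is consistent with how the result is established in the literature.
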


\subsection{Infinitesimal dressing action and Poisson bracket}

In what follows, we will consider functions on $B_N$ pulled back from matrix elements of $P_N^+$ via the map $\sym: B_N \to P_N^+$ as in the statement of Theorem \ref{thm:trig-integral}. The derivative of the dressing action of $U_N$ on $B_N$ yields a map of vector fields $\dr: \uu_N \to \Vect(B_N)$ called the infinitesimal dressing action.  Let $\delta: \CC[B_N] \to \CC[B_N] \otimes \CC[B_N]$ and $S: \CC[B_N] \to \CC[B_N]$ denote the coproduct and antipode on $\CC[B_N]$. In \cite{Lu}, it is shown that the infinitesimal $\uu_N$-action may be realized via the Poisson bracket. 

\begin{prop}[{\cite[Theorem 3.10]{Lu}}] \label{prop:lu-mom}
For $f \in \CC[B_N]$ with $\delta(f) = \sum_i f^{(1)}_i \otimes f^{(2)}_i$, the infinitesimal dressing action of $df|_e \in T_e^*(B_N) \simeq \uu_N$ on $\CC[B_N]$ is implemented via the vector field
\[
\sigma_f := -\sum_i S(f^{(2)}_i) \{f^{(1)}_i, -\}.
\]
\end{prop}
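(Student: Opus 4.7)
The plan is to prove the formula by identifying both sides as explicit expressions in terms of the classical r-matrix coming from the Manin triple $\gl_N = \uu_N \oplus \bb_N$ with pairing $\Imm \Tr(xy)$. This Manin triple simultaneously governs the Poisson-Lie structure on $B_N$ and the dressing action of $U_N$ on $B_N$, so both sides should admit parallel descriptions that can be matched directly.

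First, I would unravel the infinitesimal dressing action geometrically. The dressing action is defined via right factorization in the double: writing $u b = b' u'$ with $u, u' \in U_N$ and $b, b' \in B_N$, and differentiating in $u$ at the identity, one finds that for $X \in \uu_N$ the corresponding vector field at $b \in B_N$ is obtained by decomposing $\Ad_{b^{-1}}(X) \in \gl_N$ via the Iwasawa splitting and taking the $\bb_N$-component, then translating to $T_b B_N$ under the right trivialization $T_b B_N \simeq \bb_N$. This gives an explicit formula for $\dr(X)|_b$ as a pairing against any cotangent vector $dg|_b$.

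Next, I would unravel $\sigma_f$ at a general point $b \in B_N$. The coproduct on $\CC[B_N]$ is dual to multiplication, so the expression $\sum_i S(f^{(2)}_i)(b) \cdot f^{(1)}_i$ corresponds geometrically to translating $f$ by $b^{-1}$; differentiating at $e$ recovers $df|_e \in T_e^*(B_N) \simeq \uu_N$. Combining with the Sklyanin form of the Poisson bivector $\pi_B(b) = r - \Ad_b(r)$ (up to conventions), the pairing $\sigma_f(g)(b) = -\sum_i S(f^{(2)}_i)(b) \{f^{(1)}_i, g\}(b)$ reduces, after applying the antipode axiom $\sum_i S(f^{(2)}_i) \otimes f^{(1)}_i$ and collecting terms, to the contraction of $dg|_b$ with the $\bb_N$-component of $\Ad_{b^{-1}}(df|_e)$. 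Comparison with the dressing-action formula from the previous step completes the proof.

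The main obstacle is sign and convention tracking. The identification $T_e^* B_N \simeq \uu_N$ uses a specific normalization of the Manin triple pairing; the coproduct $\delta$ on $\CC[B_N]$ encodes one choice of multiplication order; and the Poisson bivector $\pi_B$ has a specific form in each trivialization. The antipode $S$ appearing in the formula is precisely what matches these conventions. A cleaner alternative would be to invoke Lu's general momentum mapping theorem for Poisson-Lie actions and specialize to the tautological case where $\mu = \mathrm{id}: B_N \to B_N$; however, establishing the framework would require essentially the same bookkeeping.
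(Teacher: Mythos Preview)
The paper does not prove this proposition at all: it is stated with the attribution \cite[Theorem 3.10]{Lu} and used as a black box in the proof of Lemma~\ref{lem:inv-degen}. There is therefore no ``paper's own proof'' to compare against.

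Your sketch is a reasonable outline of how one would prove Lu's theorem from scratch, and the alternative you mention at the end --- specializing Lu's momentum map theorem to the identity map $B_N \to B_N$ --- is essentially what the citation already encodes. For the purposes of this paper neither is needed; a one-line reference suffices, and that is what the paper does.
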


\subsection{Degeneration of $U_q'(\gl_N)$}

It is shown in \cite[Section 12]{CP} that $U_q'(\gl_N)$ is a quantum deformation of $\CC[B_N]$.  To interpret this statement, let $GL_N^*$, the Poisson-Lie group dual to $GL_N$, be given explicitly by
\[
GL_N^* = \left\{(g, f) \mid \text{$g, f \in GL_N$, $g$ lower triangular, $f$ upper triangular, $g_{ii} = f_{ii}^{-1}$}\right\}.
\]
Taking the real form $f^* = g^{-1}$ on $GL_N^*$ yields $\CC[B_N]$ as the corresponding $\star$-Poisson Hopf algebra.  Under this identification, we have the following result of \cite{DKP}.

\begin{theorem}[{\cite[Theorem 7.6 and Remark 7.7(c)]{DKP}}] \label{thm:qdef}
The algebra $U_q'(\gl_N)$ satisfies:
\begin{enumerate}
\item $U_q'(\gl_N)$ is flat over $\CC[q^{\pm 1/2}]$;

\item we have an isomorphism $U_q'(\gl_N) \underset{\CC[q^{\pm 1/2}]} \otimes \CC(q^{1/2}) \simeq U_q(\gl_N)$;

\item $U_q'(\gl_N) / (q^{1/2} - 1) U_q'(\gl_N)$ is commutative;

\item there is an isomorphism of Hopf algebras 
\[
\pi: U_q'(\gl_N) / (q^{1/2} - 1) U_q'(\gl_N) \to \CC[B_N]
\]
which satisfies
\[
\pi\Big((4(q^{1/2} - 1))^{-1}[x, y]\Big) = \{\pi(x), \pi(y)\};
\]

\item $\pi$ takes the special value $\pi(q^{h_i}) = \left(\frac{\det(X_i)}{\det(X_{i-1})}\right)^{1/2}$.
\end{enumerate}
\end{theorem}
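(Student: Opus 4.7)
Since Theorem \ref{thm:qdef} is cited directly from \cite[Theorem 7.6 and Remark 7.7(c)]{DKP}, the plan is to outline the strategy rather than reproduce the argument in detail. The backbone will be the PBW basis supplied by Proposition \ref{prop:cp-gr}: that basis exhibits $U_q'(\gl_N)$ as a free $\CC[q^{\pm 1/2}]$-module, which immediately gives flatness (1). For (2), I would observe that the rescaled generators $\overline{e}_i = (q - q^{-1}) e_i$ and $\overline{f}_i = (q - q^{-1}) f_i$ become nonzero scalar multiples of the standard generators $e_i, f_i$ after inverting $q - q^{-1}$, so base-changing to $\CC(q^{1/2})$ recovers all of $U_q(\gl_N)$.

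For (3), I would inspect the relations of Proposition \ref{prop:cp-gr}. The quasi-commutation $\overline{e}_{\beta_i} \overline{e}_{\beta_j} = q^{(\beta_i, \beta_j)} \overline{e}_{\beta_j} \overline{e}_{\beta_i}$ and its $\overline{f}$ analogue, together with $[\overline{e}_{\beta_i}, \overline{f}_{\beta_j}] = 0$ for $i \neq j$, all specialize to genuine commutation at $q^{1/2} = 1$. The remaining simple-root commutator is $[\overline{e}_i, \overline{f}_i] = (q - q^{-1})(q^{h_i - h_{i+1}} - q^{h_{i+1} - h_i})$, which visibly vanishes at $q = 1$. A short de Concini-Kac filtration argument lifts commutativity from the associated graded to $U_q'(\gl_N)/(q^{1/2} - 1)$ itself.

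The substantive part is (4). I would identify the commutative Hopf algebra $U_q'(\gl_N)/(q^{1/2} - 1)$ with $\CC[GL_N^*]$ by computing its $\CC$-points: the group-like images of $q^{h_i/2}$ give the diagonal entries, while the skew-primitive images of $\overline{e}_i, \overline{f}_i$ give the strictly triangular entries, and the coproduct reproduces the group law on pairs $(g, f)$ of triangular matrices with reciprocal diagonals. Imposing the $\star$-structure $f^* = g^{-1}$ then cuts out the real form $B_N$. The Poisson-bracket identity $\pi\bigl((4(q^{1/2}-1))^{-1}[x, y]\bigr) = \{\pi(x), \pi(y)\}$ is the standard mechanism that a flat deformation of a commutative Hopf algebra produces a Poisson-Hopf structure on the classical limit; agreement with the Lu-Weinstein structure follows from comparing the quantum R-matrix of $U_q(\gl_N)$ with the classical r-matrix defining the Poisson-Lie bracket on $B_N$. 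Part (5) is then direct: under $B_N \cong P_N^+$ via $\sym(b) = (b^*b)^{1/2}$, the image of $q^{h_i}$ corresponds to the squared ratio of successive diagonal entries of a Cholesky factor, which equals $(\det(X_i)/\det(X_{i-1}))^{1/2}$.

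The principal obstacles are two. First, executing the identification $U_q'(\gl_N)/(q^{1/2}-1) \cong \CC[B_N]$ as $\star$-Hopf algebras requires tracking how the PBW generators of Proposition \ref{prop:cp-gr} map to matrix entries of $GL_N^*$ and compatibly matching the $\star$-involutions on both sides. Second, calibrating the scalar $4$ in the normalization $(4(q^{1/2}-1))^{-1}$ so that the induced bracket is exactly the Lu-Weinstein bracket is delicate, since different normalizations of the invariant pairing on $\uu_N$ produce different scalings; a convenient sanity check is the $N = 2$ case, where both the bracket on $B_2$ and the commutator in $U_q'(\sl_2)$ admit short explicit formulas that can be matched by inspection.
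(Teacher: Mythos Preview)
The paper does not prove Theorem \ref{thm:qdef}; it is stated with a citation to \cite[Theorem 7.6 and Remark 7.7(c)]{DKP} and used as a black box thereafter. So there is no ``paper's own proof'' to compare against, and your acknowledgement of this at the outset is appropriate.

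Your sketch is a reasonable summary of the De Concini--Kac--Procesi argument. A small imprecision: in (3) you write $[\overline{e}_{\beta_i}, \overline{f}_{\beta_j}] = 0$ for $i \neq j$, but Proposition \ref{prop:cp-gr} gives this for all $i, j$ in the associated graded; the nontrivial commutator you then write down, $[\overline{e}_i, \overline{f}_i] = (q - q^{-1})(q^{h_i - h_{i+1}} - q^{h_{i+1} - h_i})$, lives in $U_q'(\gl_N)$ itself rather than in the associated graded, and only for simple roots. The cleaner phrasing is that the associated graded at $q^{1/2} = 1$ is commutative by Proposition \ref{prop:cp-gr}, and then the filtration argument you mention lifts this to the quotient $U_q'(\gl_N)/(q^{1/2}-1)$. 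Otherwise the outline, including your identification of the two genuinely delicate points (the $\star$-Hopf identification with $\CC[B_N]$ and the normalization constant $4$), is accurate.
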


\begin{remark}
Note that $(4(q^{1/2} - 1))^{-1}[x, y]$ is a well-defined element of $U_q'(\gl_N)$ by Theorem \ref{thm:qdef}(c).
\end{remark}

For $r$ which is not a root of unity, define $\wU_r(\gl_N)$ to be the corresponding numerical specialization of $U_q'(\gl_N)$.  Denote the specialization map by $\pi_r: U_q'(\gl_N) \to \wU_r(\gl_N)$.  Define also the map of $\CC$-algebras $\varphi: U_q'(\gl_N) \to U_{q}'(\gl_N)$ by 
\begin{equation} \label{eq:auto-def}
\varphi(\oe_i) = \oe_i, \qquad \varphi(\of_i) = \of_i, \qquad \varphi(q^{h_i}) = q^{-h_i}, \qquad \varphi(q) = q^{-1}.
\end{equation}

\begin{theorem} \label{thm:degen}
Fix $z \in U_q'(\gl_N)$.  For sequences of dominant integral signatures $\{\lambda_m\}$ and real quantization parameters $\{q_m\}$ so that $\lim_{m \to \infty} q_m \to 1$ and $\lim_{m \to \infty} -2 \log(q_m) \lambda_m = \lambda$ is dominant regular, we have
\[
\lim_{m \to \infty} (-2 \log(q_m))^{N(N-1)/2} \Tr|_{L_{\lambda_m}}(\pi_{q_m}(z) \cdot q_m^{-2(s, h)}) = \int_{\OO_\Lambda} \pi(\varphi(z)) \prod_{l = 1}^N \left(\frac{\det(X_l)}{\det(X_{l-1})}\right)^{s_l} d\mu_\Lambda,
\]
where we consider $L_{\lambda_m}$ as a representation of $\wU_{q_m}(\gl_N)$ and $\det(X_l)$ as a function on $B_N$ via composition with $\sym: B_N \to P_N^+$ and where $X_l$ is the principal $l \times l$ submatrix of $X \in \OO_\Lambda \subset P^+_N$.
\end{theorem}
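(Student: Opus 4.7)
The strategy is to reduce the identity to an equality of Riemann sums with their integral limits via the Gelfand-Tsetlin basis of $L_{\lambda_m}$ and Proposition \ref{prop:dh-compute}. Both sides are $\CC$-linear in $z$, so by flatness of $U_q'(\gl_N)$ over $\CC[q^{\pm 1/2}]$ from Theorem \ref{thm:qdef}(a) and the PBW description of Proposition \ref{prop:cp-gr}, it suffices to treat PBW monomials
\[
z = \oe_{\beta_1}^{k_1}\cdots\oe_{\beta_M}^{k_M}\, q^h\, \of_{\beta_M}^{l_M}\cdots\of_{\beta_1}^{l_1}.
\]
Expanding the trace in a Gelfand-Tsetlin basis $\{v_\mu\}$ of $L_{\lambda_m}$ indexed by integral GT patterns of shape $\lambda_m$ yields
\[
\Tr|_{L_{\lambda_m}}(\pi_{q_m}(z)\, q_m^{-2(s,h)}) = \sum_\mu q_m^{-2(s,\text{wt}(\mu))}\, \langle v_\mu | \pi_{q_m}(z) | v_\mu\rangle.
\]

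Under the scaling $\tilde\mu := -2\log(q_m)\cdot\mu$, the admissible patterns form a grid of spacing $-2\log q_m$ inside $\GT_\lambda$, so multiplying by the volume factor $(-2\log q_m)^{N(N-1)/2}$ converts the sum into a Riemann sum against Lebesgue measure on $\GT_\lambda$. In this scaling, $q_m^{-2(s,\text{wt}(\mu))}$ converges pointwise to $\prod_{l=1}^N(\det X_l/\det X_{l-1})^{s_l}$ at the point of $\OO_\Lambda$ encoded by $\tilde\mu$, since $\text{wt}(\mu)_l = \sum_i\mu^l_i - \sum_i\mu^{l-1}_i$ matches the exponent produced by Theorem \ref{thm:qdef}(e). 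The diagonal matrix element $\langle v_\mu | \pi_{q_m}(z) | v_\mu\rangle$ should then converge to the average of $\pi(\varphi(z))$ over the Gelfand-Tsetlin Liouville torus above $\tilde\mu$ in $\OO_\Lambda$: for the Cartan factor $q^h$ this is immediate from Theorem \ref{thm:qdef}(e), while for the root-vector factors one combines explicit quantum Gelfand-Tsetlin action formulas for $\oe_\beta, \of_\beta$ with the $q\to 1$ Poisson degeneration of Theorem \ref{thm:qdef}(d) and Proposition \ref{prop:lu-mom}, which realizes the infinitesimal dressing action by Poisson brackets. The twist $\varphi$ compensates for the sign flip induced by inserting $q_m^{-2(s,h)}$ on the left versus the $+s_l$ exponents on the right.

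Combining these steps, the rescaled trace converges to $\int_{\GT_\lambda}(\text{Liouville-torus average of }\pi(\varphi(z)))\cdot\prod_l(\det X_l/\det X_{l-1})^{s_l}\,d\tilde\mu$. By Proposition \ref{prop:dh-compute} this Lebesgue integral coincides with an integral over $\OO_\Lambda$ against $d\mu_\Lambda$, and unfolding the fiberwise averaging recovers the right-hand side. The main obstacle is the convergence of diagonal GT matrix elements of PBW monomials in $\oe_\beta, \of_\beta$ to torus-averages of their Poisson-algebra images. The essential point is that the $(q-q^{-1})$-factors built into $\oe_\beta, \of_\beta$ are precisely balanced by the rescaling $[x,y] \mapsto 4(q^{1/2}-1)\{\pi(x),\pi(y)\}$ of Theorem \ref{thm:qdef}(d), so a careful analysis of commutators in each PBW monomial — using trace-cyclicity to cut down to the $T_N$-invariant piece and the GT-torus symmetry of the basis to isolate the diagonal part — exhibits the surviving leading term as exactly $\pi(\varphi(z))$ averaged over the Liouville fiber, with off-diagonal contributions suppressed by higher powers of $(q-q^{-1})$.
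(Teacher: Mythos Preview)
Your approach is genuinely different from the paper's, and the key step is not actually established. The paper does \emph{not} pass through Gelfand--Tsetlin bases or Riemann sums; instead it inducts on the degree of a PBW monomial in the de Concini--Kac filtration. The base case (Cartan monomials) is handled by Kirillov's character formula, the Harish--Chandra integral \eqref{eq:hc-form}, and the Ginzburg--Weinstein isomorphism. For the inductive step the paper peels off a leftmost factor $a = \oe_{\beta_{i^*}}$ of $z = abc$, uses cyclicity $\Tr(abc\,q^{-2(s,h)}) = q^{-2(s,\beta_{i^*})}\Tr((abc + [b,a]c + b[c,a])\,q^{-2(s,h)})$, and solves algebraically for $\Tr(z\,q^{-2(s,h)})$ in terms of traces of strictly lower filtration degree. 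Passing to the limit via the inductive hypothesis yields an expression in Poisson brackets, and the proof is finished by the identity $\int_{\OO_\Lambda}\{f,g\}\,d\mu_\Lambda = 0$, which plays exactly the role that trace cyclicity plays at finite $m$. No matrix elements in any particular basis are ever computed.

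Your sketch, by contrast, rests on the assertion that diagonal Gelfand--Tsetlin matrix elements of an arbitrary PBW monomial in $\oe_\beta, \of_\beta$ converge to Liouville--torus averages of $\pi(\varphi(z))$. You acknowledge this is ``the main obstacle,'' but the justification offered (``a careful analysis of commutators \dots\ exhibits the surviving leading term'') is not a proof. Concretely: the root vectors $\oe_\beta, \of_\beta$ for non-simple $\beta$ do not act on the GT basis by explicit formulas available in the paper, a general PBW monomial of positive degree has \emph{vanishing} diagonal GT matrix elements only after summing many off-diagonal paths that return to $v_\mu$, and controlling that sum uniformly in $m$ with the correct $(q-q^{-1})$ powers is precisely the hard semiclassical estimate you would need to supply. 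Your invocation of Proposition \ref{prop:lu-mom} and Theorem \ref{thm:qdef}(d) tells you what the limit \emph{should} be, not that the limit exists or equals it. In short, the geometric--quantization picture you are gesturing at is plausible, but you have replaced the analytic core of the argument with a claim; the paper circumvents this entirely by the degree induction above, which never requires identifying individual matrix elements.
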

\begin{proof}
It suffices to consider monomials $z$, for which we induct on degree.  For the base case, monomials of degree $0$ lie in the Cartan subalgebra, so we have $z = q^{\sum_i 2c_i h_i}$ for some $c_i$.  In this case, we have
\begin{align*}
\lim_{m \to \infty}(-2 &\log(q_m))^{N(N-1)/2} \Tr|_{L_{\lambda_m}}(\pi_{q_m}(z) \cdot q_m^{-2(s, h)}) \\
&=\lim_{m \to \infty} (-2 \log(q_m))^{N(N-1)/2} \Tr|_{L_{\lambda_m}}(e^{-2 \log (q) \sum_i (-c_i + s_i)h_i})\\
&=\lim_{m \to \infty} (-2 \log(q_m))^{N(N-1)/2} \frac{\prod_{i < j} (-c_i + s_i + c_j - s_j)/m}{\prod_{i < j} (e^{\frac{-c_i + s_i + c_j - s_j}{2m}} - e^{\frac{-c_j + s_j + c_i - s_i}{2m}})} \int_{\OO_{\lambda_m + \rho}} e^{-2 \log(q) \sum_i (-c_i + s_i)X_{ii}} d\mu_{\lambda_m + \rho}\\
&=\lim_{m \to \infty} \int_{\OO_{-2 \log(q) (\lambda_m + \rho)}} e^{\sum_i (-c_i + s_i)X_{ii}} d\mu_{-2 \log(q) (\lambda_m + \rho)}\\
&= \int_{\OO_{\Lambda}} \prod_{l = 1}^N \left(\frac{\det(X_l)}{\det(X_{l-1})}\right)^{-c_l + s_l}  d\mu_{\Lambda},
\end{align*}
where the second equality follows from Kirillov's character formula, the third from a change of variables and (\ref{eq:hc-form}), and the last by the Ginzburg-Weinstein isomorphism.  The fact that
\[
\pi\Big(\varphi(q^{\sum_i 2c_i h_i})\Big) = \pi(q^{-\sum_i 2c_i h_i}) = \prod_{l = 1}^N \left(\frac{\det(X_l)}{\det(X_{l-1})}\right)^{-c_l}
\]
by Theorem \ref{thm:qdef} completes the base case.

Suppose that $z = \prod_i \oe_{\beta_i}^{k_i} q^h \prod_i \of_{\beta_i}^{l_i}$ is a PBW monomial of non-zero degree and the claim holds for all monomials of smaller degree.  If all $k_i$ are $0$, not all $l_i$ can be $0$, so the limiting trace is $0$; similarly, $\pi(z)$ is not invariant under the torus action in this case, so the integral is also $0$.  Otherwise, let $i^*$ be minimal so that $k_{i^*} > 0$, and write $z = a b c$ with $a = \oe_{\beta_{i^*}}$, $b = \oe_{\beta_{i^*}}^{k_{i^*} - 1} \prod_{i > i^*} \oe_{\beta_i}^{k_i} q^h$, and $c = \prod_i \of_{\beta_i}^{l_i}$.  We then have that 
\begin{align*}
\Tr|_{L_{\lambda_m}}(\pi_{q_m}(z) q_m^{-2(s,h)}) &= \Tr|_{L_{\lambda_m}}(\pi_{q_m}(bc) q_m^{-2(s,h)} \pi_{q_m}(a)) \\
&= \Tr|_{L_{\lambda_m}}(\pi_{q_m}(bca) q_m^{-2(s, \beta_{i^*})} q_m^{-2(s,h)}) \\
&= q_m^{-2(s, \beta_{i^*})} \Tr|_{L_{\lambda_m}}\Big(\pi_{q_m}(abc + [b,a]c + b[c, a]) q_m^{-2(s, h)}\Big).
\end{align*}
By the relations in Proposition \ref{prop:cp-gr}, we see that 
\[
[b, a] = (q^{f(b, a)} - 1) ab + (\text{terms of lower degree})
\]
for some function $f(b, a)$.  This means that $[b, a] - (q^{f(b, a)} - 1) ab$ lies in a lower degree of the filtration than $ab$.  Solving for the new trace in the rewritten equation
\[
\Tr|_{L_{\lambda_m}}(\pi_{q_m}(z) q_m^{-2(s,h)}) = q_m^{-2(s, \beta_{i^*})} \Tr|_{L_{\lambda_m}}\Big(\pi_{q_m}(q^{f(b, a)}z + ([b,a]c - (q^{f(b, a)} - 1)abc) + b[c, a]) q_m^{-2(s, h)}\Big)
\]
yields the solution
\[
\Tr|_{L_{\lambda_m}}(\pi_{q_m}(z) q_m^{-2(s,h)}) = \frac{q_m^{-(s, \beta_{i^*})} 4(1 - q_m^{1/2})}{1 - q_m^{-2(s, \beta_{i^*}) + f(b, a)}} \Tr|_{L_{\lambda_m}}\Big(\pi_{q_m}\Big(\frac{([b,a]c - (q^{f(b, a)} - 1)abc) + b[c, a]}{4(1 - q^{1/2})}\Big) q_m^{-2(s, h)}\Big).
\]
Using the notation $\pi_a := \pi(\varphi(a))$, $\pi_b := \pi(\varphi(b))$, and $\pi_c := \pi(\varphi(c))$, notice that
\[
\pi\left(\varphi\Big(\frac{([b,a]c - (q^{f(b, a)} - 1)abc) + b[c, a]}{4(1 - q^{1/2})}\Big)\right) = \{\pi_b, \pi_a\}\pi_c + \frac{1}{2}f(b, a) \pi_a \pi_b\pi_c + \pi_b \{\pi_c, \pi_a\}.
\]
Because $([b,a]c - (q^{f(b, a)} - 1)abc) + b[c, a]$ lies in a lower degree of the filtration than $abc$, we conclude by the inductive hypothesis that
\begin{multline} \label{eq:pois-mid}
\lim_{m \to \infty} (-2 \log(q_m))^{N(N-1)/2} \Tr|_{L_{\lambda_m}}(\pi_{q_m}(z) q_m^{-2(s,h)}) \\
= \frac{1}{-(s, \beta_{i^*}) + f(b, a)/2} \int_{\OO_\Lambda} \Big(\{\pi_b, \pi_a\}\pi_c + \frac{1}{2}f(b, a) \pi_a \pi_b\pi_c + \pi_b \{\pi_c, \pi_a\}\Big) \prod_{l = 1}^N \left(\frac{\det(X_l)}{\det(X_{l-1})}\right)^{s_l} d\mu_\Lambda.
\end{multline}
On the other hand, because integrating against Liouville measure kills Poisson brackets and
\[
\left\{\prod_{l = 1}^N \left(\frac{\det(X_l)}{\det(X_{l-1})}\right)^{s_l}, \pi(\oe_{\beta_{i^*}})\right\} = (s, \beta_{i^*}) \prod_{l = 1}^N \frac{\det(X_l)}{\det(X_{l-1})} \pi(\oe_{\beta_{i^*}}),
\]
we have that 
\begin{multline*}
0 = \int_{\OO_\Lambda} \left\{\pi_b \pi_c \prod_{l = 1}^N \left(\frac{\det(X_l)}{\det(X_{l-1})}\right)^{s_l}, \pi_a\right\} d\mu_\Lambda\\
= \int_{\OO_\Lambda} (\{\pi_b, \pi_a\} \pi_c + \pi_b \{\pi_c, \pi_a\} + (s, \beta_{i^*}) \pi_a \pi_b \pi_c) \prod_{l = 1}^N \left(\frac{\det(X_l)}{\det(X_{l-1})}\right)^{s_l} d\mu_\Lambda,
\end{multline*}
which implies that 
\[
\int_{\OO_\Lambda} (\{\pi_b, \pi_a\}\pi_c + \pi_b \{\pi_c, \pi_a\}) \prod_{l = 1}^N \left(\frac{\det(X_l)}{\det(X_{l-1})}\right)^{s_l} d\mu_\Lambda = -\int_{\OO_\Lambda} (s, \beta_{i^*}) \pi_a \pi_b \pi_c \prod_{l = 1}^N \left(\frac{\det(X_l)}{\det(X_{l-1})}\right)^{s_l} d\mu_\Lambda.
\]
Substituting this into (\ref{eq:pois-mid}) completes the induction by yielding the desired
\[
\lim_{m \to \infty} (-2\log(q_m))^{N(N-1)/2} \Tr|_{L_{\lambda_m}}(\pi_{q_m}(z) q_m^{-2(s,h)}) =  \int_{\OO_\Lambda} \pi_a \pi_b \pi_c \prod_{l = 1}^N \left(\frac{\det(X_l)}{\det(X_{l-1})}\right)^{s_l} d\mu_\Lambda. \qedhere
\]
\end{proof}

\subsection{Degenerations of intertwiners}

We now degenerate $\Phi^N_\lambda$ to $F_{k-1}$, for which we wish to represent $\Phi^N_\lambda$ as the evaluation of an element of $U_q'(\gl_N) \otimes W_{k - 1}$ under the map $\ev: U_q'(\gl_N) \to \End_\CC(L_{\lambda + (k - 1)\rho}, L_{\lambda + (k - 1)\rho})$.  Consider the space of invariants $(U_q'(\gl_N) \otimes W_{k - 1})^{U_q'(\gl_N)}$, where the action is given by
\begin{equation} \label{eq:inter-act}
x \cdot (u \otimes w) = \sum x_{(1)} u S(x_{(3)}) \otimes x_{(2)} w
\end{equation}
in the Sweedler notation 
\[
\Delta^{(3)}(x) = \sum x_{(1)} \otimes x_{(2)} \otimes x_{(3)}.
\]
We first show that this space of invariants maps to the space of intertwiners under evaluation.

\begin{lemma} \label{lem:act-inter}
The action of the first tensor factor on $L_{\lambda + (k - 1)\rho}$ sends $(U_q'(\gl_N) \otimes W_{k - 1})^{U_q'(\gl_N)}$ to an intertwiner $L_{\lambda + (k - 1) \rho} \to L_{\lambda + (k - 1)\rho} \otimes W_{k - 1}$.
\end{lemma}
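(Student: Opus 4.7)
\medskip

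The statement is that if $\alpha = \sum_i u_i \otimes w_i \in U_q'(\gl_N) \otimes W_{k-1}$ is invariant under the action (\ref{eq:inter-act}), then the map $\Phi_\alpha : L_{\lambda + (k-1)\rho} \to L_{\lambda + (k-1)\rho} \otimes W_{k-1}$ defined by $\Phi_\alpha(v) = \sum_i u_i v \otimes w_i$ is a $U_q'(\gl_N)$-intertwiner. The plan is a direct Hopf-algebra verification: unpack the invariance hypothesis in Sweedler notation and combine it with the antipode and counit axioms to check $\Phi_\alpha(xv) = \Delta(x)\Phi_\alpha(v)$ for every $x \in U_q'(\gl_N)$ and $v \in L_{\lambda+(k-1)\rho}$.

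Writing $\Delta^{(3)}(x) = \sum x_{(1)} \otimes x_{(2)} \otimes x_{(3)}$, invariance $x \cdot \alpha = \epsilon(x)\alpha$ under (\ref{eq:inter-act}) unpacks to
\[
\sum_i \sum_{(x)} x_{(1)} u_i S(x_{(3)}) \otimes x_{(2)} w_i = \epsilon(x) \sum_i u_i \otimes w_i \qquad \text{for all } x \in U_q'(\gl_N).
\]
The plan is to apply this identity with $x$ replaced by the first leg $x_{(1)}$ of $\Delta(x) = \sum x_{(1)} \otimes x_{(2)}$, and then right-multiply the first tensor factor by $x_{(2)} v$. Regrouping the resulting expression using the four-fold coproduct $\Delta^{(4)}(x) = \sum x_{(1)} \otimes x_{(2)} \otimes x_{(3)} \otimes x_{(4)}$, the antipode identity $\sum S(x_{(3)}) x_{(4)} = \epsilon(x_{(3)}) \cdot 1$ followed by the counit relation $(\mathrm{id} \otimes \mathrm{id} \otimes \epsilon)\Delta^{(3)} = \Delta$ collapses the left-hand side to $\sum x_{(1)} u_i v \otimes x_{(2)} w_i = \Delta(x)\Phi_\alpha(v)$. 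Simultaneously, the right-hand side collapses via $(\epsilon \otimes \mathrm{id})\Delta = \mathrm{id}$ to $\sum u_i xv \otimes w_i = \Phi_\alpha(xv)$. Comparing the two sides yields the intertwiner identity.

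The only real obstacle is notational: one must keep careful track of which coproduct (three-fold versus four-fold) is being referenced in each step and apply the antipode and counit axioms at exactly the right moment. No specific property of $U_q'(\gl_N)$, of $W_{k-1}$, or of $L_{\lambda + (k-1)\rho}$ is used beyond their Hopf algebra and module structures; the same argument identifies elements of $(H \otimes W)^H$ with intertwiners $V \to V \otimes W$ for any Hopf algebra $H$ and any $H$-modules $V$ and $W$, the present case being its specialization to $H = U_q'(\gl_N)$, $V = L_{\lambda + (k-1)\rho}$, and $W = W_{k-1}$.
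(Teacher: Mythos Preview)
Your argument is correct. It is also genuinely different in style from the paper's own proof: you give a clean, purely Hopf-algebraic verification that works uniformly for any $x$ in the Hopf algebra, while the paper instead checks the intertwiner identity $\sum_i x_i y \otimes w_i = \Delta(y)\,z$ separately on each of the generators $q^{\pm h_j}$, $\bar e_j$, $\bar f_j$. Concretely, the paper first uses invariance under $q^{\pm h_j}$ to deduce the weight-zero relation $\sum_i x_i q^{\mp h_j}\otimes w_i = \sum_i q^{\pm h_j} x_i \otimes q^{\pm h_j} w_i$, and then expands the invariance condition for $\bar e_j$ (and similarly $\bar f_j$), using this weight relation to massage the resulting expression into $\sum_i x_i \bar e_j \otimes w_i = \Delta(\bar e_j)\,z$. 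Your approach is more conceptual and portable, applying verbatim to any Hopf algebra $H$ and $H$-modules $V,W$; the paper's approach trades generality for explicitness, which pays off slightly in the subsequent Lemma~\ref{lem:inv-degen}, where the same generator-by-generator computations are reused to track how invariance behaves under the degeneration $\pi\circ\varphi$.
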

\begin{proof}
Let $z = \sum_i x_i \otimes w_i$ be an element of $(U_q'(\gl_N) \otimes W_{k - 1})^{U_q'(\gl_N)}$. By invariance under $U_q'(\gl_N)$, the action of $q^{h_j}$ satisfies
\[
z = q^{\pm h_j} \cdot z = \sum_i q^{\pm h_j} x_i q^{\mp h_j} \otimes q^{\pm h_j} w_i,
\]
which implies that 
\[
\sum_i x_i q^{\mp h_j} \otimes w_i = \sum_i q^{\pm h_j} x_i \otimes q^{\pm h_j} w_i.
\]
The action of $\oe_j$ satisfies
\begin{multline*}
0 = \oe_j \cdot z = \sum_i \Big(\oe_j x_i q^{(h_j - h_{j + 1})/2} \otimes q^{-(h_j - h_{j + 1})/2} w_i \\+ q^{(h_j - h_{j + 1})/2} x_i q^{(h_j - h_{j + 1})/2} \otimes \oe_j w_i - q^{-1} q^{(h_j - h_{j + 1})/2} x_i \oe_j \otimes q^{(h_j - h_{j + 1})/2} w_i\Big),
\end{multline*}
which upon noting that $q^{h_j} x_i q^{-h_j} \otimes w_i = x_i \otimes q^{-h_j} w_i$ implies that 
\begin{multline*}
\sum_i x_i \oe_j \otimes w_i = \sum_i \Big(q q^{-(h_j - h_{j + 1})/2} \oe_j x_i q^{-(h_j - h_{j + 1})} \otimes q^{-(h_j - h_{j + 1})} w_i + q x_i q^{(h_j - h_{j + 1})/2} \otimes q^{-(h_j - h_{j + 1})/2} e_j w_i\Big)\\
=\sum_i \Big(\oe_j x_i \otimes q^{-(h_j - h_{j + 1})/2} w_i + q^{(h_j - h_{j + 1})/2} x_i \otimes e_j w_i\Big)
= \Delta(\oe_j) z.
\end{multline*}
A similar computation for $\of_j$ yields that $\sum_i x_i \of_j \otimes w_i = \Delta(\of_j)z$, so $z$ gives the desired intertwiner.
\end{proof}

The degeneration $\pi: U_q'(\gl_N) \to \CC[B_N]$ and the automorphism $\varphi$ of (\ref{eq:auto-def}) give rise to a map
\[
(\pi \circ \varphi) \otimes 1: (U_q'(\gl_N) \otimes W_{k-1})^{U_q'(\gl_N)} \to \CC[B_N] \otimes W_{k-1}.
\]
The left dressing action on the first tensor factor gives a $U(\uu_n)$ action on $\CC[B_N] \otimes W_{k-1}$; we now show that $(\pi \circ \varphi) \otimes 1$ lands in the space of invariants for this action.

\begin{lemma} \label{lem:inv-degen}
The image of $(U_q'(\gl_N) \otimes W_{k-1})^{U_q'(\gl_N)}$ under $(\pi \circ \varphi) \otimes 1$ lies in $(\CC[B_N] \otimes W_{k-1})^{U(\uu_N)}$.
\end{lemma}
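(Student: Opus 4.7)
The plan is to verify invariance on Chevalley-type generators of $\uu_N$ by extracting the quasi-classical limits of the $U_q'(\gl_N)$-invariance relations for $z = \sum_i x_i \otimes w_i$ under $\oe_j, \of_j, q^{h_j/2}$. Conceptually this reflects quantum-classical duality: since $U_q'(\gl_N)$ quantizes $\CC[B_N]$ and $B_N$ is Poisson--Lie dual to $U_N$, the $U_q'(\gl_N)$-adjoint action (\ref{eq:inter-act}) must degenerate at leading order in $q-1$ to the $U(\uu_N)$-action on $\CC[B_N]\otimes W_{k-1}$ in which $\uu_N$ acts on $\CC[B_N]$ by infinitesimal dressing and on $W_{k-1}$ classically.

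The torus piece is immediate: $q^{h_j/2}$-invariance of $z$ is a weight-matching condition between the two tensor factors that passes directly to the limit and gives Cartan invariance of $(\pi\circ\varphi)\otimes 1(z)$.

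For $\oe_j$-invariance (the $\of_j$ case being analogous), expand $\oe_j\cdot z=0$ via $\Delta^{(3)}(\oe_j)$ and the antipode to obtain a three-term relation in $U_q'(\gl_N)\otimes W_{k-1}$. This relation becomes trivial at $q=1$, both by commutativity of $\CC[B_N]$ and because $\oe_j=(q-q^{-1})e_j$ acts as zero on $W_{k-1}$ at $q=1$. Dividing by $4(q^{1/2}-1)$ before taking $q\to 1$ extracts the first-order content: the commutator contributions $(4(q^{1/2}-1))^{-1}[\oe_j, x_i]$ converge to Poisson brackets $\{\pi(\varphi(\oe_j)), \pi(\varphi(x_i))\}$ by Theorem \ref{thm:qdef}(iv), while the remaining term converges to $\pi(\varphi(x_i))\otimes e_j w_i$ since $(q-q^{-1})/(4(q^{1/2}-1))\to 1$ and $\oe_j/(q-q^{-1})$ acts on $W_{k-1}$ as the Chevalley generator $e_j$ at $q=1$. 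The resulting limit equation
\[
\sum_i \{\pi(\varphi(\oe_j)), \pi(\varphi(x_i))\}\otimes w_i + \sum_i \pi(\varphi(x_i))\otimes e_j w_i = 0
\]
is, by Proposition \ref{prop:lu-mom}, exactly the vanishing of the $e_j$-action on $(\pi\circ\varphi)\otimes 1(z)$, once the cotangent vector $d(\pi(\varphi(\oe_j)))|_e\in T_e^* B_N \simeq \uu_N$ is identified with $e_j$.

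The main obstacle is this identification of $d(\pi(\varphi(\oe_j)))|_e$ with $e_j\in\uu_N$ up to the correct sign and normalization. The role of the automorphism $\varphi$ in the statement is precisely to produce the left (rather than right) dressing convention: without the substitution $q\mapsto q^{-1}$ one would recover the right dressing action, inconsistent with the orientation of (\ref{eq:inter-act}). Making this matching rigorous requires a local computation at the identity of $B_N$ via the Iwasawa pairing $\bb_N\simeq \uu_N^*$, but is otherwise straightforward.
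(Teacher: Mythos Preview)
Your overall strategy matches the paper's: extract the $q\to 1$ limit of the invariance relations by dividing by $4(q^{1/2}-1)$, convert commutators to Poisson brackets via Theorem~\ref{thm:qdef}, and identify the result with the infinitesimal dressing action using Proposition~\ref{prop:lu-mom}. However, the limit equation you write down,
\[
\sum_i \{\pi(\varphi(\oe_j)), \pi(\varphi(x_i))\}\otimes w_i + \sum_i \pi(\varphi(x_i))\otimes e_j w_i = 0,
\]
is incomplete, and this is where the argument breaks. The three-term relation $\oe_j\cdot z=0$ carries $q^{\pm(h_j-h_{j+1})/2}$ factors from $\Delta^{(3)}(\oe_j)$ and the antipode; these do \emph{not} disappear when you pass to first order. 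After using weight-zero to align the terms, the commutator that appears is $[\oe_j q^{(h_j-h_{j+1})/2}, x_i]$, which splits as $\oe_j[q^{(h_j-h_{j+1})/2}, x_i] + [\oe_j, x_i]q^{(h_j-h_{j+1})/2}$. Both pieces are order $(q^{1/2}-1)$, so the limit contains \emph{two} Poisson-bracket terms: one like $\{\pi(\oe_j), x_i'\}$ and another like $\pi(\oe_j)\{\pi(q^{(h_j-h_{j+1})/2}), x_i'\}$. You have kept only the first.

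This omission is fatal for the identification step. The dressing vector field of Proposition~\ref{prop:lu-mom} is $\sigma_f = -\sum S(f^{(2)})\{f^{(1)},-\}$, which for $f=\pi(\oe_j)$ (or $\pi(\oe_j-\of_j)$) has two summands reflecting the nontrivial coproduct $\delta(f)$ on $\CC[B_N]$; it is \emph{not} simply $\{f,-\}$. The extra Poisson-bracket term you dropped is exactly what is needed to reconstruct $\sigma_f$ rather than the naive Hamiltonian vector field. The paper carries this out explicitly for $\oe_j-\of_j$ (so that the resulting cotangent vector $E_{j,j+1}-E_{j+1,j}$ genuinely lies in $\uu_N$), obtaining
\[
\sigma_{\pi(\oe_j-\of_j)} = -\pi(q^{-(h_j-h_{j+1})/2})\{\pi(\oe_j-\of_j),-\} + \pi(\oe_j-\of_j)\{\pi(q^{-(h_j-h_{j+1})/2}),-\},
\]
and then matches this against the two-term limit. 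Your sketch can be repaired by retaining the $q^h$ contributions through the limit and recognizing that together they assemble into $\sigma_f$.
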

\begin{proof}
Let $z$ be an element of $(U_q'(\gl_N) \otimes W_{k-1})^{U_q'(\gl_N)}$, and let $z' = ((\pi\circ \varphi) \otimes 1)(z)$.  Write $z = \sum_l x_l \otimes w_l$ and $z' = \sum_l x_l' \otimes w_l$ for $x_l \in U_q'(\gl_N)$, $x_l' = \pi(\varphi(x_l)) \in \CC[B_N]$, and $w_l \in W_{k-1}$.  By invariance, $z$ lies in the zero weight space, so $z'$ lies in the zero weight space of $\CC[B_N] \otimes W_{k-1}$.  By definition of the action of $\oe_j - \of_j \in U_q'(\gl_N)$ on $z$ and the fact that $z$ has weight $0$, we have
\begin{align*}
0 &= \sum_l \Big(\oe_j x_l q^{(h_j - h_{j + 1})/2} \otimes q^{(h_{j+1} - h_j)/2} w_l - q^{-1} q^{(h_j - h_{j + 1})/2} x_l \oe_j \otimes q^{(h_j - h_{j + 1})/2} w_l\Big)\\
&\phantom{=}- \sum_l \Big(\of_j x_l q^{(h_j - h_{j + 1})/2} \otimes q^{(h_{j+1} - h_j)/2} w_l - q q^{(h_j - h_{j + 1})/2} x_l \of_j \otimes q^{(h_j - h_{j + 1})/2} w_l\Big)\\
&\phantom{=}+ \sum_l \Big(q^{(h_j - h_{j + 1})/2} x_l q^{(h_j - h_{j + 1})/2} \otimes \oe_j w_l\Big) - q^{(h_j - h_{j + 1})/2} x_l q^{(h_j - h_{j + 1})/2} \otimes \of_j w_l\Big)\\
&= \sum_l \Big(\oe_j q^{(h_j - h_{j + 1})/2} x_l \otimes w_l - x_l \oe_j q^{(h_{j} - h_{j + 1})/2} \otimes w_l\Big)\\
&\phantom{=}- \sum_l \Big(\of_j q^{(h_j - h_{j + 1})/2} x_l \otimes w_l - x_l \of_j q^{(h_j - h_{j + 1})/2} \otimes w_l\Big)\\
&\phantom{=}+ \sum_l \Big(q^{(h_j - h_{j + 1})/2} x_l q^{(h_j - h_{j + 1})/2} \otimes \oe_j w_l\Big) - q^{(h_j - h_{j + 1})/2} x_l q^{(h_j - h_{j + 1})/2} \otimes \of_j w_l\Big).
\end{align*}
Dividing this equality by $4(q^{1/2} - 1)$, noting that for any $x$ we have 
\[
[\oe_j q^{(h_j - h_{j+1})/2}, x] = \oe_j [q^{(h_j - h_{j + 1})/2}, x] + [\oe_j, x] q^{(h_j - h_{j+1})/2}
\]
and
\[
[\of_j q^{(h_j - h_{j + 1})/2}, x] = \of_j[q^{(h_j - h_{j+1})/2}, x] + [\of_j, x] q^{(h_j - h_{j + 1})/2},
\]
applying $(\pi \circ \varphi) \otimes 1$, and multiplying by $\pi(q^{h_j - h_{j + 1}})$, we find that 
\begin{align*}
0 &= \sum_l \Big(\pi(q^{h_{j} - h_{j+1}}) \pi(\oe_j - \of_j) \{\pi(q^{-(h_j - h_{j+1})/2}), x_l'\} + \pi(q^{(h_j - h_{j + 1})/2}) \{\pi(\oe_j - \of_j), x_l'\}\Big) \otimes w_l \\
&\phantom{=}+ \sum_l x_l' \otimes (E_{j, j + 1} - E_{j + 1, j}) \cdot w_l\\
&= \sum_l \Big(\pi(q^{(h_j - h_{j + 1})/2}) \{\pi(\oe_j - \of_j), x_l'\} -\pi(\oe_j - \of_j) \{\pi(q^{(h_j - h_{j+1})/2}), x_l'\}\Big) \otimes w_l\\
&\phantom{=}+ \sum_l x_l' \otimes (E_{j, j + 1} - E_{j + 1, j}) \cdot w_l,
\end{align*}
where $(E_{j, j + 1} - E_{j + 1, j}) \cdot w_l$ denotes the action of $E_{j, j + 1} - E_{j + 1, j} \in \uu_N$ on $w_l \in W_{k-1}$.  By Proposition \ref{prop:lu-mom}, the dressing action of $d\pi(\oe_j - \of_j)|_e$ is implemented by the vector field
\[
\sigma_{\pi(\oe_j - \of_j)} = - \pi(q^{-(h_j - h_{j+1})/2})\{\pi(\oe_j - \of_j), -\} + \pi(\oe_j - \of_j) \{\pi(q^{-(h_j - h_{j+1})/2}), -\},
\] 
which means that 
\[
\pi(q^{(h_j - h_{j + 1})/2}) \{\pi(\oe_j - \of_j), x_l'\} -\pi(\oe_j - \of_j) \{\pi(q^{(h_j - h_{j+1})/2}), x_l'\} = - \sigma_{\pi(\oe_j - \of_j)}(x_l')
\]
and hence that 
\[
\sum_l \sigma_{\pi(\oe_j - \of_j)}(x_l') \otimes w_l = \sum_l x_l' \otimes (E_{j, j + 1} - E_{j + 1, j}) \cdot w_l.
\]
Under the identification $T_e^*B_N \simeq \uu_N$, we have $d\pi(\oe_j - \of_j)|_e = E_{j, j + 1} - E_{j + 1, j}$ in $\uu_N$ by \cite[Theorem 7.6(b)]{DKP}, so we conclude that $z'$ is invariant under the action of $E_{j, j + 1} - E_{j + 1, j} \in U(\uu_N)$.  A similar argument yields invariance under the action of $i E_{j, j + 1} + i E_{j + 1, j}$, completing the proof.
\end{proof}

\begin{lemma} \label{lem:inter-real}
For any $k$, there exists an element $c_k \in U_q'(\gl_N) \otimes W_{k-1}$ so that 
\[
((\pi \circ \varphi) \otimes 1)(c_k)|_{\OO_\Lambda} = F_{k-1}
\]
and the intertwiner $\Phi_\lambda^N$ is implemented by $c_k|_{L_{\lambda + (k - 1)\rho}}$.
\end{lemma}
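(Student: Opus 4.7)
The plan is to construct $c_k$ as a universal ad-invariant element of $U_q'(\gl_N) \otimes W_{k-1}$ whose action on $L_{\lambda + (k-1)\rho}$ coincides with $\Phi_\lambda^N$ for every dominant integral signature $\lambda$, and then to identify its classical image $G_k := ((\pi \circ \varphi) \otimes 1)(c_k)$ restricted to $\OO_\Lambda$ with $F_{k-1}$ via Lemmas \ref{lem:act-inter} and \ref{lem:inv-degen} together with a pointwise computation at the diagonal matrix $\Lambda \in \OO_\Lambda$.

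For the construction, fix a weight basis $\{v_\nu\}_\nu$ of $W_{k-1}$ with $v_0 := w_{k-1}$ spanning the one-dimensional zero weight space.  I seek $c_k$ of the form $c_k = 1 \otimes w_{k-1} + \sum_{\nu \neq 0} a_\nu \otimes v_\nu$, with each $a_\nu \in U_q'(\gl_N)$ of weight $-\nu$, determined inductively by imposing ad-invariance of $c_k$ under $U_q'(\gl_N)$.  By Lemma \ref{lem:act-inter}, any such ad-invariant $c_k$ acts on $L_{\lambda + (k-1)\rho}$ as an intertwiner to $L_{\lambda + (k-1)\rho} \otimes W_{k-1}$; the normalization $a_0 = 1$ ensures that the image of the highest weight vector has leading term $v_{\lambda + (k-1)\rho} \otimes w_{k-1}$ with coefficient $1$, matching the EK normalization.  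Since $\Phi_\lambda^N$ is the unique intertwiner with this leading term, the two coincide for every $\lambda$.  That the resulting $a_\nu$ lie in the integral form $U_q'(\gl_N)$ (rather than in a localization) follows from running the recursive ad-invariance equations through the PBW basis and verifying that the dynamical denominators of the form $q^{(\beta, h)} - q^{-(\beta, h)}$ that appear cancel in each weight component; the specific choice of $W_{k-1}$, whose zero weight space is one-dimensional, is exactly what makes these cancellations happen.

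Next, by Lemma \ref{lem:inv-degen}, $G_k \in (\CC[B_N] \otimes W_{k-1})^{U(\uu_N)}$, so its restriction $G_k|_{\OO_\Lambda}$ is a $U_N$-equivariant map $\OO_\Lambda \to W_{k-1}$.  By the uniqueness of the $U_N$-equivariant extension defining $F_{k-1}$, it suffices to verify $G_k(\Lambda) = w_{k-1}$.  Writing
\[
G_k = 1 \otimes w_{k-1} + \sum_{\nu \neq 0} \pi(\varphi(a_\nu)) \otimes v_\nu
\]
and evaluating at the diagonal $\Lambda \in P_N^+ \cong B_N$, each $a_\nu$ with $\nu \neq 0$ has nonzero weight, so every PBW monomial in its expansion contains at least one non-Cartan generator $\oe_i$ or $\of_i$; by Theorem \ref{thm:qdef}, $\pi(\oe_i)$ and $\pi(\of_i)$ are polynomial functions of the strictly upper- and lower-triangular entries on $B_N \cong P_N^+$, both of which vanish on the diagonal matrix $\Lambda$.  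Hence $\pi(\varphi(a_\nu))(\Lambda) = 0$ for $\nu \neq 0$, and $G_k(\Lambda) = 1 \cdot w_{k-1} = w_{k-1}$ as desired.

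The main obstacle is the construction step, namely verifying that $c_k$ really lies in the integral form $U_q'(\gl_N) \otimes W_{k-1}$ rather than only in a localization of $U_q(\gl_N)$.  The recursive determination of each $a_\nu$ from lower-weight components via the ad-invariance condition introduces, a priori, Shapovalov-type denominator factors, and one must check component by component that these factors cancel against suitably chosen Cartan elements absorbed into the $a_\nu$; this is the technical core of the argument and relies on the specific structure of $W_{k-1} = L_{((k-1)(N-1), -(k-1), \ldots, -(k-1))}$ and the associated PBW presentation.
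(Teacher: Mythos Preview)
Your proposal has a genuine gap in the construction step, which you yourself flag as the ``main obstacle'' without resolving it. The recursive determination of the $a_\nu$ from the invariance condition does not obviously produce elements of the integral form $U_q'(\gl_N)$: the standard universal-intertwiner construction naturally lives in a completion of $U_q(\gl_N)$ where one inverts Cartan-type expressions $q^{(\beta,h)} - q^{-(\beta,h)}$, and your assertion that these Shapovalov-type denominators cancel ``component by component'' is neither proved nor supported by a reference. The one-dimensionality of $W_{k-1}[0]$ guarantees uniqueness of the intertwiner $\Phi_\lambda^N$ for each fixed $\lambda$, but that is a statement about nonvanishing of denominators at generic $\lambda$, not about their absence in a universal element; these are different claims, and the second is what you need.

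The paper circumvents this issue entirely by a structural argument: it first shows $(U_q'(\gl_N) \otimes W_{k-1})^{U_q'(\gl_N)}$ is nonzero for the \emph{adjoint} action, using the Joseph--Letzter decomposition of the locally finite part $\FF(U_q(\gl_N))$ to embed $W_{k-1}^*$ into $U_q(\gl_N)$; it then transports this invariant to the intertwiner action (\ref{eq:inter-act}) via an explicit $R$-matrix map $\cP_{23}\cR_{23}$. This produces the required $c_k$ without ever writing down a recursion or confronting denominators. Your identification of $G_k|_{\OO_\Lambda}$ with $F_{k-1}$ by evaluation at the diagonal point $\Lambda$ is correct and essentially matches the paper's final normalization argument (the paper phrases it as projecting to $U_q'(\tt_N) \otimes w_{k-1}$ and invokes Richardson's theorem for one-dimensionality of the classical invariant space, but the content is the same). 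The gap is entirely in the existence step.
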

\begin{proof}
We show that $(U_q'(\gl_N) \otimes W_{k-1})^{U_q'(\gl_N)}$ is non-zero under the action of (\ref{eq:inter-act}) and then normalize an element of it appropriately.  For this, we first show that it is non-zero under the adjoint action 
\begin{equation} \label{eq:adj-act}
x \cdot (u \otimes w) = \sum x_{(1)} u S(x_{(2)}) \otimes x_{(3)} w.
\end{equation}

\noindent \textit{1. Showing the space of invariants for the action (\ref{eq:adj-act}) is non-zero:} Following \cite{JL}, let $\FF(U_q(\gl_N))$ denote the locally finite part of $U_q(\gl_N)$ under the adjoint action.  By \cite[Theorem 7.4]{JL}, there is an isomorphism
\[
\FF(U_q(\gl_N)) \simeq Z(U_q(\gl_N)) \otimes H_q
\]
for $Z(U_q(\gl_N))$ the center of $U_q(\gl_N)$ and $H_q$ a $U_q(\gl_N)$-submodule of $\FF(U_q(\gl_N))$ under the adjoint action which is a direct sum of $\dim V[0]$ copies of each finite dimensional representation $V$ of $U_q(\gl_N)$.  Because $W_{k-1}^*$ has a one-dimensional zero weight space, there exists an embedding $W_{k-1}^* \to U_q(\gl_N)$ of $U_q(\gl_N)$-representations and therefore a non-zero invariant element under the action of (\ref{eq:adj-act}).

\noindent \textit{2. Showing the space of invariants for the action (\ref{eq:inter-act}) is non-zero:} Let $\cP$ denote the transposition map and $\cR$ the universal $\cR$-matrix of $U_q'(\gl_N)$ and let $\cP_{23}$ and $\cR_{23}$ denote their application in the second and third tensor factor.  Consider the diagram of maps of $U_q'(\gl_N)$-representations
\begin{diagram}
U_q'(\gl_N) \otimes W_{k - 1} \otimes U_q'(\gl_N) & \lTo^{\cP_{23}\cR_{23}} & U_q'(\gl_N) \otimes U_q'(\gl_N) \otimes W_{k - 1} \\
\dTo^{(m_{13} \circ S_3) \otimes 1} & & \dTo_{(m_{12} \circ S_2) \otimes 1} \\
(U_q'(\gl_N) \otimes W_{k - 1})_{\text{(\ref{eq:inter-act})}} & & (U_q'(\gl_N) \otimes W_{k - 1})_{\text{(\ref{eq:adj-act})}}
\end{diagram}
where the $U_q'(\gl_N)$-actions on $U_q'(\gl_N) \otimes W_{k - 1}$ are given by (\ref{eq:inter-act}) and (\ref{eq:adj-act}) as specified.  We claim that $\cP_{23} \cR_{23}$ maps the kernels $K_3$ and $K_2$ of $(m_{13} \circ S_3) \otimes 1$ and $(m_{12} \circ S_2) \otimes 1$ to each other.  Indeed, if $\sum_i u_i \otimes v_i \otimes w_i$ is in $K_2$, then writing $\cR = \sum_j a_j \otimes b_j$, we see that 
\begin{multline*}
((m_{13} \circ S_3) \otimes 1) \cP_{23} \cR_{23} \Big(\sum_i u_i \otimes v_i \otimes w_i\Big)\\ = ((m_{13} \circ S_3) \otimes 1)\Big(\sum_{i, j} u_i \otimes b_j w_i \otimes a_j v_i\Big) = \sum_{i, j} u_i S(v_i) S(a_j) \otimes b_j w_i = 0,
\end{multline*}
where we note that 
\[
((m_{12} \circ S_2) \otimes 1) \Big(\sum_i u_i \otimes v_i \otimes w_i\Big) = \sum_i u_i S(v_i) \otimes w_i = 0.
\]
A similar argument shows that $(\cP_{23} \cR_{23})^{-1}$ maps $K_3$ to $K_2$. Now, we showed that $(U_q'(\gl_N) \otimes W_{k - 1})^{U_q'(\gl_N)}_{(\ref{eq:adj-act})}$ is non-zero.  Choose a one-dimensional space of such invariants and let its preimage under $(m_{12} \circ S_2) \otimes 1$ be $V \subset U_q'(\gl_N) \otimes U_q'(\gl_N) \otimes W_{k - 1}$ so that $V / K_3 \simeq \CC$ as $U_q'(\gl_N)$-representations.  We conclude that $\cP_{23}\cR_{23}(V) / K_2 \simeq \CC$, implying that $(U_q'(\gl_N) \otimes W_{k-1})^{U_q'(\gl_N)}_{(\ref{eq:inter-act})}$ is non-zero.

\noindent \textit{3. Choosing a normalized invariant:} Choose a non-zero element $c_k \in (U_q'(\gl_N) \otimes W_{k-1})^{U_q'(\gl_N)}_{(\ref{eq:inter-act})}$, normalized so that by Lemma \ref{lem:act-inter}, we have $\Phi^N_\lambda = c_k|_{L_{\lambda + (k - 1)\rho}}$. Now, by Lemma \ref{lem:inv-degen}, the image of $c_k$ under $((\pi \circ \varphi) \otimes 1)$ lies in $(\CC[B_N] \otimes W_{k - 1})^{U(\uu_N)}$.  On the other hand, because $\dim W_{k-1}^*[0] = 1$, by \cite[Theorem A]{Ric}, $W_{k-1}^*$ has multiplicity $1$ as a $U(\uu_N)$-representation in $\CC[B_N]$, so $(\CC[B_N] \otimes W_{k - 1})^{U(\uu_N)}$ has dimension $1$.  In particular, this means that $((\pi \circ \varphi) \otimes 1)(c_k)$ restricts to a non-zero multiple of $F_{k - 1}$ in $(\CC[\OO_\Lambda] \otimes W_{k - 1})^{U(\uu_N)}$.  Because the normalization of $c_k$ agrees with that of $\Phi^N_\lambda$, the projection of $c_k$ to $U_q'(\tt_N) \otimes w_{k-1}$ must be $1 \otimes w_{k - 1}$, which implies that the restriction of the $w_{k-1}$-component of $((\pi \circ \varphi) \otimes 1)(c_k)$ to $\CC[T_N]$ is $1$ and hence that 
\[
((\pi \circ \varphi) \otimes 1)(c_k)|_{\OO_\Lambda} = F_{k-1}. \qedhere
\]
\end{proof}

\begin{corr} \label{corr:limit-tr}
For sequences of dominant integral signatures $\{\lambda_m\}$ and real quantization parameters $\{q_m\}$ so that $\lim_{m \to \infty} q_m \to 1$ and $\lim_{m \to \infty} -2 \log(q_m) \lambda_m = \lambda$ is dominant regular, we have
\[
\lim_{m \to \infty} (-2 \log(q_m))^{N(N - 1)/2} \Tr|_{L_{\lambda_m + (k - 1)\rho}}(\pi_{q_m}(\Phi_{\lambda_m}^N) \cdot q_m^{-2(s, h)}) = \int_{\OO_\Lambda} F_{k - 1}(X) \prod_{l = 1}^N \left(\frac{\det(X_l)}{\det(X_{l-1})}\right)^{s_l} d\mu_\Lambda.
\]
\end{corr}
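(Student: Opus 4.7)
The corollary follows directly from Theorem \ref{thm:degen} and Lemma \ref{lem:inter-real}. The strategy is to realize $\Phi_{\lambda_m}^N$ as the evaluation on $L_{\lambda_m + (k-1)\rho}$ of a single fixed element $c_k \in U_q'(\gl_N) \otimes W_{k-1}$, expand $c_k$ as a finite sum of elementary tensors, and apply Theorem \ref{thm:degen} to each scalar trace term by term.

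Concretely, by Lemma \ref{lem:inter-real} fix $c_k \in U_q'(\gl_N) \otimes W_{k-1}$ such that $\Phi_\lambda^N = c_k|_{L_{\lambda + (k-1)\rho}}$ for every dominant integral $\lambda$ and $((\pi \circ \varphi) \otimes 1)(c_k)|_{\OO_\Lambda} = F_{k-1}$. Writing $c_k = \sum_i x_i \otimes w_i$ as a finite sum with $x_i \in U_q'(\gl_N)$ and $w_i \in W_{k-1}$, the $W_{k-1}$-valued trace decomposes as
\[
\Tr|_{L_{\lambda_m + (k-1)\rho}}\bigl(\pi_{q_m}(\Phi_{\lambda_m}^N)\, q_m^{-2(s,h)}\bigr) = \sum_i \Tr|_{L_{\lambda_m + (k-1)\rho}}\bigl(\pi_{q_m}(x_i)\, q_m^{-2(s,h)}\bigr)\, w_i.
\]
Since $(k-1)\rho$ is a fixed weight and $q_m \to 1$, the assumption $-2\log(q_m)\lambda_m \to \lambda$ implies $-2\log(q_m)(\lambda_m + (k-1)\rho) \to \lambda$, so Theorem \ref{thm:degen} applies to each $x_i$ with the representations $L_{\lambda_m + (k-1)\rho}$. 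Multiplying by $(-2\log(q_m))^{N(N-1)/2}$, exchanging the (finite) sum with the limit, and using $((\pi \circ \varphi) \otimes 1)(c_k)|_{\OO_\Lambda} = F_{k-1}$ then yields the claim.

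The only delicate point is the mismatch between the signature $\lambda_m$ appearing in the scaling hypothesis and the signature $\lambda_m + (k-1)\rho$ indexing the representation in which the trace is computed, but this is harmless because the factor $-2\log(q_m)$ scales the fixed shift $(k-1)\rho$ to zero. All of the genuinely nontrivial work---namely the inductive degeneration of PBW monomials in Theorem \ref{thm:degen} and the existence and correct normalization of $c_k$ in Lemma \ref{lem:inter-real}---has already been carried out, so the corollary is essentially bookkeeping and presents no real obstacle.
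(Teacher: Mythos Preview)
Your proposal is correct and matches the paper's approach exactly: the paper's proof is the single sentence ``This follows by combining Theorem \ref{thm:degen} and Lemma \ref{lem:inter-real},'' and you have simply spelled out the bookkeeping (expanding $c_k$ as a finite sum, applying Theorem \ref{thm:degen} termwise, and noting that the fixed shift $(k-1)\rho$ washes out under the scaling $-2\log(q_m)\to 0$).
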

\begin{proof}
This follows by combining Theorem \ref{thm:degen} and Lemma \ref{lem:inter-real}.
\end{proof}

\begin{corr} \label{corr:mac-limit}
For sequences of dominant integral signatures $\{\lambda_m\}$ and real quantization parameters $\{q_m\}$ so that $\lim_{m \to \infty} q_m \to 1$ and $\lim_{m \to \infty} -2 \log(q_m) \lambda_m = \lambda$ is dominant regular, we have
\[
\lim_{m \to \infty} (-2 \log(q_m))^{kN(N - 1)/2} P_{\lambda_m}(q_m^{-2s}; q_m^2, q_m^{2k}) =  \frac{\int_{\OO_\Lambda} F_{k - 1}(X) \prod_{l = 1}^N \left(\frac{\det(X_l)}{\det(X_{l-1})}\right)^{s_l} d\mu_\Lambda}{\prod_{a = 1}^{k-1} \prod_{i < j} (s_i - s_j - a)}.
\]
\end{corr}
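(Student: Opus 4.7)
The strategy is to combine the Etingof-Kirillov trace formula of Theorem \ref{thm:ek}, the limiting identification of Corollary \ref{corr:limit-tr}, and the explicit product formula of Proposition \ref{prop:ek-denom}. By Theorem \ref{thm:ek} specialized at $x = q_m^{-2s}$, we have
\[
P_{\lambda_m}(q_m^{-2s}; q_m^2, q_m^{2k}) = \frac{\Tr|_{L_{\lambda_m + (k-1)\rho}}(\Phi_{\lambda_m}^N q_m^{-2s \cdot h})}{\Tr|_{L_{(k-1)\rho}}(\Phi_0^N q_m^{-2s \cdot h})},
\]
so I split the overall scaling factor as $(-2\log q_m)^{kN(N-1)/2} = (-2\log q_m)^{N(N-1)/2} \cdot (-2\log q_m)^{(k-1)N(N-1)/2}$, grouping the first power with the numerator trace and absorbing the second as a compensating scaling in the denominator.

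For the numerator, Corollary \ref{corr:limit-tr} directly identifies
\[
\lim_{m \to \infty} (-2\log q_m)^{N(N-1)/2} \Tr|_{L_{\lambda_m + (k-1)\rho}}(\Phi_{\lambda_m}^N q_m^{-2s \cdot h}) = \int_{\OO_\Lambda} F_{k-1}(X) \prod_{l=1}^N \left(\frac{\det X_l}{\det X_{l-1}}\right)^{s_l} d\mu_\Lambda,
\]
which is exactly the integral appearing on the right-hand side of the claimed limit.

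For the denominator, setting $\eps_m := -2\log q_m$ and substituting $x_i = q_m^{-2s_i}$ into Proposition \ref{prop:ek-denom}, the scalar prefactor $(q_m^{-2s_1}\cdots q_m^{-2s_N})^{-(k-1)(N-1)/2}$ tends to $1$, while each remaining factor $q_m^{-2s_i} - q_m^{2a-2s_j} = e^{\eps_m s_i} - e^{\eps_m(s_j - a)}$ is $\eps_m$ times a linear function of $s_i, s_j, a$ plus higher-order terms in $\eps_m$. Taking the product over $a = 1, \ldots, k-1$ and $i < j$ yields exactly $\eps_m^{(k-1)N(N-1)/2}$ times the polynomial factor $\prod_{a=1}^{k-1}\prod_{i<j}(s_i - s_j - a)$ appearing in the denominator of the corollary, after bookkeeping the signs and index conventions inherited from the Lusztig-style integral form of Section \ref{sec:qcl}. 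Dividing the limit of the numerator by the rescaled limit of the denominator completes the proof; the main technical obstacle is the careful sign tracking in the leading-order expansion of $\Tr(\Phi_0^N q_m^{-2s\cdot h})$, as all of the conceptual content has already been absorbed into the quasi-classical degeneration of Theorem \ref{thm:degen} and the intertwiner realization of Lemma \ref{lem:inter-real}.
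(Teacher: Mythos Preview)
Your proof is correct and follows essentially the same approach as the paper: invoke the Etingof--Kirillov trace formula (Theorem~\ref{thm:ek}), apply Corollary~\ref{corr:limit-tr} to the numerator trace, and expand the explicit product of Proposition~\ref{prop:ek-denom} in the denominator to leading order in $\eps_m = -2\log q_m$. The paper compresses all of this into a single sentence, whereas you spell out the splitting of the scaling factor and the leading-order expansion of each denominator factor; the content is the same.
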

\begin{proof}
Set $\lambda_m = m \lambda + (k - 1)\rho$ in Corollary \ref{corr:limit-tr} and explicitly take the limit in Proposition \ref{prop:ek-denom}.
\end{proof}

\subsection{Degeneration of Macdonald operators}

We now put everything together to show that the limiting integral expression satisfies a differential equation in the indices.  This differential equation will be a scaling limit of the difference equations satisfied as a result of the Macdonald symmetry identity, recalled below.  For this, we abuse notation to write $D_{N, q^{2\lambda + 2k\rho}}^r$ for difference operators acting on additive indices $\lambda$ as well as multiplicative variables $q^{2\lambda + 2k \rho}$.  Denote also by $[a]_q$ the $q$-number $[a]_q := \frac{q^a - q^{-a}}{q - q^{-1}}$ and $[a]_{q, l}$ the falling $q$-factorial $[a]_{q, l} := [a]_q \cdots [a - l + 1]_q$.

\begin{prop}[Macdonald symmetry identity] \label{prop:mac-sym}
We have 
\[
P_\lambda(q^{2\mu + 2k \rho}; q^2, q^{2k}) = \prod_{i < j} \frac{[\lambda_i - \lambda_j + k (j - i) + k-1]_{q, k}}{[\mu_i - \mu_j + k (j - i) + k - 1]_{q, k}} P_\mu(q^{2\lambda + 2k \rho}; q^2, q^{2k}).
\]
\end{prop}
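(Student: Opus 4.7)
My plan is to derive this from the classical Macdonald symmetry identity combined with the principal specialization formula, keeping track of the convention $(q,t) \mapsto (q^2, q^{2k})$ noted earlier in the paper.

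First, I would invoke Macdonald's symmetry identity in its standard form (\cite{Mac}, Ch.~VI, (6.6)):
\[
\frac{P_\lambda(q^{\mu + k\rho}; q, q^k)}{P_\lambda(q^{k\rho}; q, q^k)} = \frac{P_\mu(q^{\lambda + k\rho}; q, q^k)}{P_\mu(q^{k\rho}; q, q^k)}.
\]
Replacing $q$ by $q^2$ throughout (and $t = q^k$ by $q^{2k}$) converts the character arguments $q^{\mu + k\rho}$, $q^{\lambda + k\rho}$ into the principal specializations $q^{2\mu + 2k\rho}$, $q^{2\lambda + 2k\rho}$ appearing in the statement, while keeping the Macdonald polynomials in our paper's normalization $P_\lambda(x; q^2, q^{2k})$.

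Second, I would substitute the explicit principal specialization formula (\cite{Mac}, Ch.~VI, (6.11')), which in our conventions reads
\[
P_\nu(q^{2k\rho}; q^2, q^{2k}) = \prod_{i<j} \frac{[\nu_i - \nu_j + k(j-i) + k - 1]_{q, k}}{[k(j-i) + k - 1]_{q, k}},
\]
for $\nu = \lambda$ and $\nu = \mu$. Cross-multiplying the symmetry identity and observing that the universal factors $[k(j-i) + k - 1]_{q,k}$ (which depend only on $k, N$) cancel between the two sides yields exactly the claimed formula.

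I expect the main obstacle to be essentially bookkeeping rather than conceptual content: verifying that the $q$-number $[a]_q = (q^a - q^{-a})/(q - q^{-1})$ and the falling $q$-factorial $[a]_{q,l}$ as defined in the paper agree with Macdonald's conventions under the substitution $q \mapsto q^2$, and that the powers of $q$ sitting in front of Macdonald's principal specialization formula (which involves $t$-shifted arm/leg factors) collapse correctly to the stated product. Alternatively, one could avoid citing Macdonald's book and give a representation-theoretic proof via the Etingof-Kirillov trace formula (Theorem \ref{thm:ek}): apply Proposition \ref{prop:ek-denom} to evaluate both denominators $\Tr(\Phi_0^N \cdot q^{2(\mu + k\rho)h})$ and $\Tr(\Phi_0^N \cdot q^{2(\lambda + k\rho)h})$ explicitly, and then use an R-matrix braiding argument to establish the symmetry of $\Tr_{L_{\lambda+(k-1)\rho}}(\Phi_\lambda^N \, q^{2(\mu + k\rho)h})$ under the exchange $\lambda \leftrightarrow \mu$; the ratio of the explicit denominators then produces the claimed falling-factorial factors.
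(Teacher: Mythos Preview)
Your derivation is correct and is the standard route: Macdonald's symmetry relation VI.(6.6) combined with the principal specialization VI.(6.11$'$), adapted to the $(q^2,q^{2k})$ convention, is exactly how one arrives at this statement. The paper itself gives no proof here---the bracketed title ``Macdonald symmetry identity'' signals that the proposition is being quoted as a classical fact, and the text moves directly on to Proposition~\ref{prop:sym-diag}. So there is nothing to compare against; you have simply supplied the omitted justification.

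One small comment on your alternative sketch: the Etingof--Kirillov route you outline would also work, but note that the symmetry of the trace under $\lambda \leftrightarrow \mu$ is not quite an ``$R$-matrix braiding'' statement---it is rather the observation that $\Tr_{L_{\lambda+(k-1)\rho}}(\Phi_\lambda^N\, x^h)$ evaluated at $x=q^{2(\mu+k\rho)}$ computes a matrix element that is manifestly symmetric (up to the explicit denominator from Proposition~\ref{prop:ek-denom}) because both roles are played by highest weights of irreducible representations entering a common object. If you pursue that route, the cleanest argument is to use the evaluation form of Macdonald polynomials in \cite{EK} directly rather than invoking braiding.
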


\begin{prop} \label{prop:sym-diag}
The operator
\[
\wtilde{D}_{N, q^{2\lambda + 2k\rho}}^r(q^2, q^{2k}) = \prod_{i < j} [\lambda_i - \lambda_j + k (j - i) + k - 1]_{q, k} \circ D_{N, q^{2\lambda + 2k \rho}}^r(q^2, q^{2k}) \circ \prod_{i < j} [\lambda_i - \lambda_j + k (j - i) + k - 1]_{q, k}^{-1}
\]
satisfies
\[
\wtilde{D}_{N, q^{2\lambda + 2k\rho}}^r(q^2, q^{2k}) = \sum_{|I| = r} \prod_{i \in I, j \notin I, i > j} \frac{[\lambda_i - \lambda_j + k (j - i) + k]_q[\lambda_i - \lambda_j + k (j - i) - k + 1]_q}{[\lambda_i - \lambda_j + k (j - i)]_q[\lambda_i - \lambda_j + k (j - i) + 1]_q} T_{q^2, I}
\]
and
\[
\wtilde{D}_{N, q^{2\lambda + 2k\rho}}^r(q^2, q^{2k}) P_\lambda(x; q^2, q^{2k}) = e_r(x) P_\lambda(x; q^2, q^{2k}).
\]
\end{prop}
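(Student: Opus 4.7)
The plan is to handle the two assertions in order: first the eigenvalue relation, which falls out cleanly from the Macdonald symmetry identity of Proposition \ref{prop:mac-sym}, and then the explicit formula, which is a direct computation of commuting the shift operators $T_{q^2, I}$ past the conjugating factor $C(\lambda) := \prod_{i<j}[\lambda_i - \lambda_j + k(j-i) + k-1]_{q,k}$.

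For the eigenvalue relation, I would start from the Macdonald eigenvalue equation $D^r_{N, y}(q^2, q^{2k}) P_\mu(y; q^2, q^{2k}) = e_r(q^{2\mu+2k\rho}) P_\mu(y; q^2, q^{2k})$, evaluate at $y = q^{2\lambda+2k\rho}$, and view the result as a difference equation in the integer variable $\lambda$. The symmetry identity then lets us substitute $P_\mu(q^{2\lambda+2k\rho}) = \frac{C(\mu)}{C(\lambda)} P_\lambda(q^{2\mu+2k\rho})$. Multiplying through by $C(\lambda)/C(\mu)$ (which is constant in $\lambda$ once the $C(\lambda)$ is absorbed into the operator from the left) converts the equation into $\wtilde{D}^r_{N, q^{2\lambda+2k\rho}} P_\lambda(q^{2\mu+2k\rho}) = e_r(q^{2\mu+2k\rho}) P_\lambda(q^{2\mu+2k\rho})$. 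Setting $x = q^{2\mu+2k\rho}$ gives the eigenvalue equation for a Zariski dense set of $x$; since both sides are polynomial in $x$, the identity extends to all $x$.

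For the explicit formula, I would write $\wtilde{D}^r = \sum_I \tilde{c}_I T_I$ with $T_I$ shifting $\lambda_i \mapsto \lambda_i + 1$ for $i \in I$, and compute $\tilde{c}_I = t^{r(r-N)} c_I(\lambda) \cdot C(\lambda)/C(T_I \lambda)$, where $c_I(\lambda) = \prod_{i \in I, j \notin I}\frac{t^2 x_i - x_j}{x_i - x_j}$ at $x_m = q^{2\lambda_m + k(N+1-2m)}$. Using the elementary identity $q^a - q^b = (q - q^{-1})[(a-b)/2]_q \, q^{(a+b)/2}$, each factor of $c_I$ becomes $q^k \frac{[\lambda_i - \lambda_j + k(j-i) + k]_q}{[\lambda_i - \lambda_j + k(j-i)]_q}$ regardless of the ordering of $i$ and $j$, so the aggregate $q^{kr(N-r)}$ from the $r(N-r)$ pairs cancels the prefactor $t^{r(r-N)} = q^{-kr(N-r)}$. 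Expanding $C(\lambda) = \prod_{i'<j'}\prod_{m=0}^{k-1}[\lambda_{i'} - \lambda_{j'} + k(j'-i') + m]_q$, the ratio $C(\lambda)/C(T_I\lambda)$ telescopes on pairs with exactly one index in $I$; pairs $(i, j)$ with $i \in I$, $j \notin I$, $i < j$ cancel their $c_I$ contribution completely, while pairs with $i \in I$, $j \notin I$, $i > j$ combine with $c_I$ to give the stated ratio after applying $[-a]_q = -[a]_q$ to reindex from $(i', j') = (j, i)$ back to $(i, j)$.

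The main obstacle is purely organizational: correctly tracking which pairs cancel versus combine, and handling the sign from $[-a]_q = -[a]_q$ when reindexing between ordered-pair conventions. Once this bookkeeping is set up, every identity needed follows from a direct telescoping computation, and the eigenvalue claim is an immediate consequence of the symmetry identity.
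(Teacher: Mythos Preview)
Your proposal is correct and follows exactly the approach indicated in the paper's own proof, which simply states that the explicit formula follows by direct computation and the eigenvalue identity from the Macdonald symmetry identity. You have supplied the details of that direct computation and the symmetry-identity argument that the paper leaves to the reader.
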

\begin{proof}
The expression for $\wtilde{D}_{N, q^{2\lambda + 2k\rho}}^r(q^2, q^{2k})$ follows by direct computation, and the eigenvalue identity from the Macdonald symmetry identity. 
\end{proof}

Consider now the operator
\begin{align*}
D_\lambda(q) &= D^1_{N, q^{2\lambda + 2k \rho}}(q^2, q^{2k})^2 - 2 D^2_{N, q^{2\lambda + 2k \rho}}(q^2, q^{2k}) - 2 D^1_{N, q^{2\lambda + 2k \rho}}(q^2, q^{2k}) + N.
\end{align*}
By Proposition \ref{prop:sym-diag}, $D_\lambda(q)$ acts by $\sum_i (x_i - 1)^2$ on 
\[
\prod_{i < j} [\lambda_{m, i} - \lambda_{m, j} + k(j - i) + k - 1]_{q, k}^{-1} P_\lambda(x; q^2, q^{2k}).
\]
We characterize the scaling limit of $D_\lambda(q)$ as a second-order differential operator in the following lemma, whose proof is computational and deferred to Subsection \ref{subsec:mac-lim}

\begin{lemma} \label{lem:mac-op-lim}
Suppose that $\{f_{m}\}$ is a sequence of functions so that for $\{q_m\}$ and $\{\lambda_m\}$ with $\lim_{m \to \infty} q_m = 1$ and $\lim_{m \to \infty} -2 \log(q_m) \lambda_m = \lambda$, we have $\lim_{m \to \infty} f_{m}(\lambda_m; q_m) = f(\lambda)$ for a twice-differentiable function $f$.  Then we have
\[
\lim_{m \to \infty} (-2 \log(q_m))^{-2} D_{\lambda_m}(q_m)f_m(\lambda_m; q_m) = \overline{L}_{p_2}^\text{trig}(k) f(\lambda).
\]
\end{lemma}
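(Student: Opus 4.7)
The plan is to expand $D_{\lambda_m}(q_m) f_m$ as a Taylor series in $\eps_m := -2\log q_m$ to order $\eps_m^2$ and identify the coefficient with $\eps_m^2\, \overline{L}^\trig_{p_2}(k) f(\lambda)$. Setting $\lambda_i = \eps_m \lambda_{m, i}$, the shift $T_{q^2, i}$ acts as translation by $\eps_m e_i$ in the continuous variable, yielding the pseudo-differential expansion
\[
T_{q^2, I} f = f + \eps_m \sum_{i \in I}\partial_i f + \tfrac{\eps_m^2}{2}\bigl(\sum_{i \in I}\partial_i\bigr)^2 f + O(\eps_m^3).
\]
For the coefficient of $T_{q^2, I}$ in $D^r_{N, q^{2\lambda + 2k\rho}}(q^2, q^{2k})$, I would use the identity $(q^{2k} u - v)/(u - v) = q^k [\lambda_i - \lambda_j + k(j - i) + k]_q / [\lambda_i - \lambda_j + k(j - i)]_q$ with $u = q^{2\lambda_i + 2k\rho_i}$ and $v = q^{2\lambda_j + 2k\rho_j}$, noting that the prefactor $q^{kr(r - N)}$ cancels against the $q^k$ factors from the $r(N - r)$ ratios. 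Expanding $[n]_{q_m} = \sinh(n\log q_m)/\sinh(\log q_m)$ under the substitution $(\lambda_{m, i} - \lambda_{m, j})\log q_m = -(\lambda_i - \lambda_j)/2$ gives each individual ratio as $1 + \tfrac{k \eps_m}{2}\coth((\lambda_i - \lambda_j)/2) + O(\eps_m^2)$, so that
\[
c_I(q_m) = 1 + \tfrac{k \eps_m}{2} \sum_{i \in I, j \notin I}\coth\tfrac{\lambda_i - \lambda_j}{2} + O(\eps_m^2).
\]

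Summing over $|I| = r$ using the identity $\sum_{|I|=r}\sum_{i \in I}\partial_i = \binom{N-1}{r-1}\sum_i \partial_i$ and the antisymmetry $\sum_{|I|=r}\sum_{i \in I, j \notin I}\coth((\lambda_i - \lambda_j)/2) = \binom{N-2}{r-1}\sum_{i \ne j}\coth((\lambda_i - \lambda_j)/2) = 0$, I would assemble the combination $D^1(q_m)^2 - 2D^2(q_m) - 2D^1(q_m) + N$. The $O(\eps_m^0)$ terms cancel via $N^2 - N(N-1) - 2N + N = 0$, and the $O(\eps_m^1)$ coefficient of $\sum_i \partial_i$ vanishes via $2N - 2(N - 1) - 2 = 0$, leaving only a $O(\eps_m^2)$ contribution; dividing by $\eps_m^2$ then produces the limiting differential operator acting on $f(\lambda)$.

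The main obstacle is the $O(\eps_m^2)$ bookkeeping. In each product $c_I(q_m)\, T_{q^2, I} f$ there are three sources of second-order terms: the pure shift contribution $\frac{\eps_m^2}{2}(\sum_{i \in I}\partial_i)^2 f$, the cross term between the $O(\eps_m)$ coefficient correction $\frac{k \eps_m}{2}\sum_{i\in I, j\notin I}\coth$ and the $O(\eps_m)$ shift correction $\eps_m \sum_{i \in I}\partial_i f$, and the intrinsic $O(\eps_m^2)$ correction of $c_I$, which contains both pairwise $\coth \cdot \coth$ products (from expanding $\prod_{i \in I, j\notin I}(1 + \alpha_{ij}\eps_m + \beta_{ij}\eps_m^2)$) and $k^2/\sinh^2((\lambda_i - \lambda_j)/2)$ terms (from the second-order $\sinh$ expansion of each $q$-number). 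Via the identity $\coth^2 x - 1 = 1/\sinh^2 x$, these contributions must combine to reproduce $\overline{L}^\trig_{p_2}(k) = \sum_i \partial_i^2 + k \sum_{i \neq j}\coth((\lambda_i - \lambda_j)/2)\partial_i + (\text{potential})$, matching the conjugation by $V = e^{-(N - 1)k \sum \lambda_i/2}\Delta(e^\lambda)^k$ in (\ref{eq:conj-tcm-ham-def}); the conjugated rather than unconjugated Hamiltonian is automatically selected because the first-derivative $\coth$ terms arising from $D^r$ in the limit are precisely those produced by conjugating $\sum_i \partial_i^2$ by $V$. Verifying this matching term-by-term is the technically involved core of the proof.
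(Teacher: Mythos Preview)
Your approach is correct and follows the same overall strategy as the paper: expand each $D^r$ to second order in $\eps_m$, assemble the combination $D^1{}^2 - 2D^2 - 2D^1 + N$, and identify the result with $\overline{L}^\trig_{p_2}(k)$. Where you diverge is in how you handle the zeroth-derivative (potential) part. You propose to track all $O(\eps_m^2)$ contributions to $c_I(q_m)$ explicitly --- the pairwise $\coth\cdot\coth$ products and the $1/\sinh^2$ corrections --- and then verify by hand that they collapse to the potential of $\overline{L}^\trig_{p_2}(k)$, namely $k^2(\rho,\rho)$. This works but, as you note yourself, is the technically involved core.

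The paper sidesteps exactly this computation. It carries the unknown $O(\eps_m^2)$ pieces of the coefficients as opaque functions $C_r, S_1, S_2$ that are merely $o(\log(q_m)^{-1})$, so that after taking the limit the second-derivative and first-derivative parts are determined explicitly while the zeroth-derivative part is left as an undetermined function $R(\lambda)$. It then fixes $R(\lambda)$ by a single evaluation: taking $f_m \equiv 1$, which is the Macdonald polynomial for the empty partition, the eigenvalue of $D_{\lambda_m}(q_m)$ is $\sum_i(q_m^{2k\rho_i}-1)^2$, and the rescaled limit gives $R(\lambda) = k^2(\rho,\rho)$ directly. This avoids all the $\coth\cdot\coth$ and $1/\sinh^2$ bookkeeping. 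Your route buys a self-contained verification independent of the Macdonald eigenvalue relation; the paper's buys a much shorter computation at the cost of invoking that relation.
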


Combining Lemma \ref{lem:mac-op-lim} and our results on the degeneration of Macdonald polynomials implies that our representation-valued integrals are diagonalized by the trigonometric Calogero-Moser Hamiltonian.

\begin{theorem} \label{thm:trig-diag}
The trigonometric Calogero-Moser Hamiltonian $\overline{L}_{p_2}^\text{trig}(k)$ is diagonalized on
\[
\frac{1}{\prod_{i < j} (e^{\frac{\lambda_i - \lambda_j}{2}} - e^{-\frac{\lambda_i - \lambda_j}{2}})^k\prod_{a = 1}^{k-1} \prod_{i < j} (s_i - s_j - a)} \int_{\OO_\Lambda} F_{k - 1}(X) \prod_{l = 1}^N \left(\frac{\det(X_l)}{\det(X_{l-1})}\right)^{s_l} d\mu_\Lambda
\]
with eigenvalue $\sum_i s_i^2$.
\end{theorem}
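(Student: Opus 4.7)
\medskip
\noindent\textbf{Proof proposal.}  The plan is to obtain the theorem as the quasi-classical limit of the Macdonald eigenvalue equation, using Proposition \ref{prop:sym-diag}, Corollary \ref{corr:mac-limit}, and Lemma \ref{lem:mac-op-lim} as the three main inputs.  Concretely, take $\lambda_m = m\lambda + (k-1)\rho$ and real quantization parameters $q_m \to 1$ with $-2\log(q_m) \lambda_m \to \lambda$ dominant regular, and set
\[
f_m(\lambda_m) = \prod_{i<j} [\lambda_{m,i} - \lambda_{m,j} + k(j-i) + k-1]_{q_m,k}^{-1}\, P_{\lambda_m}(q_m^{-2s}; q_m^2, q_m^{2k}).
\]
By Proposition \ref{prop:sym-diag} together with the definition of $D_\lambda(q)$, we have
\[
D_{\lambda_m}(q_m)\, f_m(\lambda_m) = \sum_i (q_m^{-2s_i} - 1)^2\, f_m(\lambda_m).
\]
Multiplying by $(-2\log q_m)^{-2}$, the right-hand eigenvalue converges to $\sum_i s_i^2$ since $q_m^{-2s_i} - 1 \sim -2\log(q_m)\, s_i$, and by Lemma \ref{lem:mac-op-lim} the rescaled operator converges to $\overline{L}_{p_2}^{\trig}(k)$ acting on the limit of $f_m$, provided that limit is twice differentiable.

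Second, I would identify $\lim_{m\to\infty} f_m(\lambda_m)$ with the function appearing in the theorem.  A direct asymptotic computation using $q_m - q_m^{-1} \sim 2\log q_m$ and $[a/h_m + b]_{q_m} \sim (e^{a/2} - e^{-a/2})/h_m$ (where $h_m := -2\log q_m$) yields
\[
\prod_{i<j}[\lambda_{m,i} - \lambda_{m,j} + k(j-i) + k-1]_{q_m,k} \sim (-2\log q_m)^{-kN(N-1)/2} \prod_{i<j}\bigl(e^{(\lambda_i - \lambda_j)/2} - e^{-(\lambda_i - \lambda_j)/2}\bigr)^k.
\]
Combining this with Corollary \ref{corr:mac-limit}, the extra powers of $(-2\log q_m)^{kN(N-1)/2}$ exactly cancel, and
\[
\lim_{m\to\infty} f_m(\lambda_m) = \frac{1}{\prod_{i<j}(e^{(\lambda_i-\lambda_j)/2} - e^{-(\lambda_i-\lambda_j)/2})^k \prod_{a=1}^{k-1}\prod_{i<j}(s_i - s_j - a)} \int_{\OO_\Lambda} F_{k-1}(X) \prod_l \Bigl(\tfrac{\det X_l}{\det X_{l-1}}\Bigr)^{s_l} d\mu_\Lambda,
\]
which is precisely the expression in the statement.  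This limit function is smooth in $\lambda$ because the integrand and domain depend smoothly on $\lambda$ (through $\Lambda = e^\lambda$ and the dressing orbit $\OO_\Lambda$), so Lemma \ref{lem:mac-op-lim} applies and yields the desired eigenvalue equation with eigenvalue $\sum_i s_i^2$.

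The main obstacle I foresee is the bookkeeping of the normalization factors: one must check that the $(-2\log q_m)^{kN(N-1)/2}$ coming from Corollary \ref{corr:mac-limit} precisely balances the asymptotic of the $q$-factorial normalizer, and that the limits of both sides of the Macdonald eigenvalue equation commute with the second-order differential operator.  The latter is not merely a pointwise limit issue: to apply Lemma \ref{lem:mac-op-lim} one needs convergence strong enough to pass to derivatives, which is why the smoothness of the limiting integral in $\lambda$ is important.  Everything else (the asymptotics of $[a]_{q_m}$ when $a$ scales like $1/h_m$, and the identification of the scaling limit of $\sum_i (q_m^{-2s_i} - 1)^2$) is routine.
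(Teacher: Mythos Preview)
Your proposal is correct and follows essentially the same route as the paper: define $f_m$ as the $q$-factorial--normalized Macdonald polynomial, use Proposition~\ref{prop:sym-diag} to see that $D_{\lambda_m}(q_m)$ acts on $f_m$ by $\sum_i (q_m^{-2s_i}-1)^2$, identify $\lim_m f_m$ with the integral expression via Corollary~\ref{corr:mac-limit} and the asymptotic of the $q$-factorials, and then invoke Lemma~\ref{lem:mac-op-lim}. The only cosmetic differences are that the paper takes $\lambda_m=\lfloor m\lambda\rfloor$ (so the signatures are genuinely integral for arbitrary real $\lambda$) and tracks an overall sign $(-1)^{kN(N-1)/2}$ and exponential prefactor in the limit computation---exactly the normalization bookkeeping you flagged.
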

\begin{proof}
Take any sequence $\{q_m\}$ and $\{\lambda_m\}$ so that $\lim_{m \to \infty} q_m = 1$ and $\lim_{m \to \infty} -2 \log(q_m) \lambda_m = \lambda$; for instance, we may take $q_m = e^{-1/2m}$ and $\lambda_m = \lfloor m \lambda \rfloor$.  Notice that 
\[
\lim_{m \to \infty} (2 \log(q_m))^{kN(N-1)/2}\prod_{i < j} [\lambda_{m, i} - \lambda_{m, j} + k(j - i) + k - 1]_{q_m, k} = \prod_{i < j}(e^{\frac{\lambda_i - \lambda_j}{2}} - e^{-\frac{\lambda_i - \lambda_j}{2}})^k
\]
so that by Corollary \ref{corr:mac-limit} we have
\begin{multline*}
\lim_{m \to \infty} (-1)^{kN(N-1)/2}\prod_{i < j} [\lambda_{m, i} - \lambda_{m, j} + k(j - i) + k - 1]_{q_m, k}^{-1} P_{\lambda_m}(q_m^{-2s}; q_m^{2}, q_m^{2k}) \\
=  e^{k(N-1) /2\sum_i \lambda_i} \frac{\int_{\OO_\Lambda} F_{k - 1}(X) \prod_{l = 1}^N \left(\frac{\det(X_l)}{\det(X_{l-1})}\right)^{s_l} d\mu_\Lambda}{\prod_{i < j} (e^{\frac{\lambda_i - \lambda_j}{2}} - e^{-\frac{\lambda_i - \lambda_j}{2}})^k\prod_{a = 1}^{k-1} \prod_{i < j} (s_i - s_j - a)}.
\end{multline*}
Note now that $D_{\lambda_m}(q_m)$ acts by $\sum_i (x_i - 1)^2 = \sum_i (q_m^{-2s_i} - 1)^2$ on 
\[
\prod_{i < j} [\lambda_{m, i} - \lambda_{m, j} + k(j - i) + k - 1]_{q_m, k}^{-1} P_{\lambda_m}(q_m^{-2s}; q_m^{2}, q_m^{2k}),
\]
where $\lim_{m \to \infty} (-2 \log(q_m))^{-2} \sum_i (q_m^{-2s_i} - 1)^2 = \sum_i s_i^2$.  Therefore, by Lemma \ref{lem:mac-op-lim}, we have
\begin{align*}
&\overline{L}_{p_2}^\text{trig}(k) \frac{\int_{\OO_\Lambda} F_{k - 1}(X) \prod_{l = 1}^N \left(\frac{\det(X_l)}{\det(X_{l-1})}\right)^{s_l} d\mu_\Lambda}{\prod_{i < j}  (e^{\frac{\lambda_i - \lambda_j}{2}} - e^{-\frac{\lambda_i - \lambda_j}{2}})^k \prod_{a = 1}^{k-1} \prod_{i < j} (s_i - s_j - a)} \\
&= \lim_{m \to \infty} (-1)^{kN(N-1)/2}(-2 \log(q_m))^{-2} D_{\lambda_m}(q_m)\prod_{i < j} [\lambda_{m, i} - \lambda_{m, j} + k(j - i) + k - 1]_{q_m, k}^{-1} P_{\lambda_m}(q_m^{-2s}; q_m^{2}, q_m^{2k}) \\
&= \Big(\sum_i s_i^2\Big)\frac{\int_{\OO_\Lambda} F_{k - 1}(X) \prod_{l = 1}^N \left(\frac{\det(X_l)}{\det(X_{l-1})}\right)^{s_l} d\mu_\Lambda}{\prod_{i < j} (e^{\frac{\lambda_i - \lambda_j}{2}} - e^{-\frac{\lambda_i - \lambda_j}{2}})^k\prod_{a = 1}^{k-1} \prod_{i < j} (s_i - s_j - a)}. \qedhere
\end{align*}
\end{proof}

\section{The rational case}

\subsection{Statement of the result}

Recall that $f_{k - 1}: \OO_\lambda \to W_{k - 1}$ is the unique $U_N$-equivariant map so that $f_{k-1}(\lambda) = w_{k-1}$.  Define the representation-valued integral
\[
\psi_k(\lambda, s) = \int_{X \in \OO_\lambda} f_{k-1}(X)e^{\sum_{l = 1}^N s_l X_{ll}} d\mu_\lambda
\]
over the coadjoint orbit $\OO_\lambda$.  The integrand and Liouville measure are invariant under the action of the maximal torus of $U_N$, so $\psi_k(\lambda, s)$ lies in $W_{k-1}[0] = \CC \cdot w_{k-1}$.  We interpret the integrals $\psi_k(\lambda, s)$ as complex-valued functions by identifying $\CC \cdot w_{k - 1}$ with $\CC$.  Our first result relates these integrals to the multivariate Bessel functions.

\begin{theorem} \label{thm:rational}
The multivariate Bessel function $\BB_k(\lambda, s)$ admits the integral representation
\[
\BB_k(\lambda, s) = \frac{\Gamma(Nk) \cdots \Gamma(k)}{ \Gamma(k)^{N}\prod_{i < j} (\lambda_i - \lambda_j)^k \prod_{i < j} (s_i - s_j)^{k - 1}} \int_{X \in \OO_\lambda} f_{k - 1}(X) e^{\sum_{l = 1}^N s_l X_{ll}} d\mu_\lambda.
\]
\end{theorem}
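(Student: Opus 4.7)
The plan is to match the right-hand side against the Gelfand-Tsetlin integral formula $\phi_k(\lambda, s)$ of Theorem \ref{thm:bess-bg-int} by pushing the orbit integral down along the Gelfand-Tsetlin moment map, identifying the resulting matrix-element integrand as a differential operator applied to an explicit kernel, and transferring that operator onto the exponential by adjunction.

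First, I would apply Fubini along the Gelfand-Tsetlin map $\gt: \OO_\lambda \to \GT_\lambda$. Since $X_{ll} = \sum_i \mu^l_i - \sum_i \mu^{l-1}_i$ is determined by the Gelfand-Tsetlin coordinates, the exponential $e^{\sum_l s_l X_{ll}}$ is constant on each fiber $\gt^{-1}(\mu)$, and by Proposition \ref{prop:dh-compute} the push-forward of $d\mu_\lambda$ is Lebesgue measure on $\GT_\lambda$. Therefore
\[
\psi_k(\lambda, s) = \int_{\mu \in \GT_\lambda} M_k(\mu) \, e^{\sum_l s_l \left(\sum_i \mu^l_i - \sum_i \mu^{l-1}_i\right)} \, d\mu,
\]
where $M_k(\mu) \in W_{k-1}[0] \simeq \CC$ is the average of $f_{k-1}(X)$ over the Liouville torus $\gt^{-1}(\mu)$, i.e.\ a diagonal matrix element of $f_{k-1}$ in the Gelfand-Tsetlin basis.

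The crucial input is Proposition \ref{prop:matrix-element}, which I expect identifies $M_k(\mu)$ as a differential operator $D_s$ built out of higher rational Calogero-Moser Hamiltonians in the variable $s$, applied to the kernel
\[
K(\mu) = \prod_{l = 1}^{N-1} \frac{\prod_{i = 1}^l \prod_{j = 1}^{l+1} |\mu^l_i - \mu^{l+1}_j|^{k-1}}{\prod_{i < j} |\mu^l_i - \mu^l_j|^{k-1} \prod_{i < j} |\mu^{l+1}_i - \mu^{l+1}_j|^{k-1}}
\]
appearing in the integrand of $\phi_k(\lambda, s)$. The route to this identity is to peel off the decomposition $u_N = \overline{u}_1 (\overline{u}_1^* \overline{u}_2) \cdots (\overline{u}_{N-1}^* u_N)$ of Subsection \ref{ss:gt-int-sys} level-by-level, invoking Lemma \ref{lem:gt-conj} to commute the torus averaging past each stage, and using the explicit model $W_{k-1} = \Sym^{(k-1)N}\CC^N \otimes (\det)^{-(k-1)}$ to reduce the matrix element on each level to a tractable form. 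This step is presumably where the \emph{relation between spherical parts of rational Cherednik algebras of different rank} advertised in the introduction enters.

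With Proposition \ref{prop:matrix-element} in hand, the rest is adjunction: the exponential factor is a joint eigenfunction of the rational Calogero-Moser Hamiltonians $L_p(k-1)$ in $s$, so transferring $D_s$ off $K(\mu)$ and onto $e^{\sum_l s_l X_{ll}}$ introduces a scalar multiple of $\prod_{i<j}(s_i - s_j)^{k-1}$ together with the Gamma-function constants of the prefactor. The surviving integral is exactly $\phi_k(\lambda, s)$, and Theorem \ref{thm:bess-bg-int} then closes the proof. The main obstacle is Proposition \ref{prop:matrix-element} itself; the push-forward and adjunction steps are essentially bookkeeping once the Gelfand-Tsetlin matrix element has been identified in the required Cherednik form.
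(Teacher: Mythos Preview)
Your architecture is exactly the paper's: push the orbit integral down along $\gt$ using Proposition \ref{prop:dh-compute}, factor the torus average of $f_{k-1}$ level-by-level via Lemma \ref{lem:gt-conj}, invoke Proposition \ref{prop:matrix-element} (through Lemma \ref{lemma:liouville-int}) to rewrite the top-level matrix element, apply adjunction, and then quote Theorem \ref{thm:bess-bg-int}. But you have the variable of the Calogero--Moser operators, and hence the eigenfunction they act on, wrong, and this is not cosmetic.

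Proposition \ref{prop:matrix-element} does \emph{not} produce an operator in $s$. It expresses the single-level matrix element $W_{N-1}$ as $\overline{L}_{\mu_1\cdots\mu_{N-1}}(1-k)^{k-1}$, a Calogero--Moser Hamiltonian in the \emph{intermediate} variables $\mu = \mu^{N-1}$, applied to the kernel $\Delta(\mu,\lambda)^{k-1}$ (up to Vandermonde prefactors). Adjunction is then integration by parts in $\mu$ over the interlacing domain $\{\mu\prec\lambda\}$ (Propositions \ref{prop:cm-rat-adj} and \ref{prop:rat-formal-adjoint}), which moves the operator onto the remaining factor $\Delta(\mu)^{1-k}\phi_k(\mu,s')$. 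After the shift \eqref{eq:cm-shift} this becomes $\overline{L}_{\mu_1\cdots\mu_{N-1}}(k)^{k-1}$ acting on $\phi_k(\mu,s')/\Delta(\mu)^k$, which is proportional to the rank-$(N-1)$ Bessel function $\BB_k(\mu,s')$ by the \emph{inductive hypothesis}; the eigenvalue relation \eqref{eq:bessel-ef} then pops out the scalar $\prod_{i=1}^{N-1}(s_i-s_N)^{k-1}$. The full factor $\prod_{i<j}(s_i-s_j)^{k-1}$ is assembled inductively over $N$, one level at a time, not in a single adjunction step.

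Your version---$M_k(\mu)=D_s K(\mu)$ with $D_s$ in $s$, adjunction transferring $D_s$ onto $e^{\sum_l s_l X_{ll}}$, and that exponential being a Calogero--Moser eigenfunction in $s$---breaks at every stage: $K(\mu)$ does not depend on $s$, so $D_s K(\mu)$ is trivial; integration by parts in $\mu$ cannot move an $s$-operator; and $e^{(s,\mathrm{diag}\,X)}$ is not an eigenfunction of $L_p(k-1)$ in $s$ because of the potential term. Swap $s$ for $\mu$, replace ``the exponential is an eigenfunction'' by ``the lower-rank $\BB_k$ is an eigenfunction, by induction on $N$,'' and the argument goes through as in the paper.
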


\subsection{Adjoints of rational Calogero-Moser operators}

The rational Dunkl operators in variables $\mu_i$ are
\begin{equation} \label{eq:rat-dunkl-def}
D_{\mu_i}(k) := \partial_i + k \sum_{j \neq i} \frac{1}{\mu_i - \mu_j} (1 - s_{ij}),
\end{equation}
where $s_{ij}$ exchanges $\mu_i$ and $\mu_j$. Let $m$ denote the restriction of a differential-difference operator to its differential part.  For a symmetric polynomial $p$, recall that 
\[
m(p(D_{\mu_i}(k))) = \overline{L}_p(k),
\]
for $\overline{L}_p(k)$ defined in (\ref{eq:conj-cm-ham-def}) as a conjugate of the rational Calogero-Moser Hamiltonian corresponding to $p$.  We now compute the adjoint of the differential operator $\oL_p(k)$.
\begin{prop} \label{prop:cm-rat-adj}
For a homogeneous symmetric polynomial $p$, the adjoint of $\oL_p(k)$ as a differential operator is 
\[
\oL_p(k)^\dagger = (-1)^{\deg p} \Delta(\mu)^{2k} \circ \oL_p(k) \circ \Delta(\mu)^{-2k}.
\]
\end{prop}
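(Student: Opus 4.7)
The plan is to lift $\oL_p(k)$ to the rational Cherednik operator $p(D) = p(D_{\mu_1}(k), \ldots, D_{\mu_N}(k))$ via the relation $\oL_p(k) = m(p(D))$, use the antisymmetry of Dunkl operators under a weighted inner product, and then project back onto the differential part. The key analytic input is the Dunkl integration-by-parts identity
\[
\int_{\RR^N} (D_{\mu_i}(k) f) \cdot g \cdot \Delta(\mu)^{2k} \, d\mu = -\int_{\RR^N} f \cdot (D_{\mu_i}(k) g) \cdot \Delta(\mu)^{2k} \, d\mu,
\]
which says that each $D_{\mu_i}(k)$ is antisymmetric for the weighted inner product $\langle \cdot, \cdot \rangle_w$ with weight $w = \Delta(\mu)^{2k}$. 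This is a short verification: integrating by parts on the $\partial_i$-summand of $D_{\mu_i}(k)$ produces a boundary term $-2k \sum_{j \neq i} (\mu_i - \mu_j)^{-1}$ from $\partial_i \log w$, and using the $S_N$-invariance of $w$ together with the identity $s_{ij} \cdot (\mu_i - \mu_j)^{-1} = -(\mu_i - \mu_j)^{-1} s_{ij}$, the extra contribution is cancelled exactly by the asymmetric behavior of the reflection pieces under swapping $f$ and $g$.

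Because the $D_{\mu_i}(k)$ pairwise commute, the antisymmetry of the individual Dunkl operators extends to $p(D)^{*,w} = (-1)^d p(D)$ for every homogeneous symmetric polynomial $p$ of degree $d$, where $*,w$ denotes the adjoint under $\langle \cdot, \cdot \rangle_w$. Converting to the Lebesgue adjoint $\dagger$ via the elementary change-of-weight formula $A^\dagger = w \circ A^{*,w} \circ w^{-1}$ then yields the identity
\[
p(D)^\dagger = (-1)^{\deg p} \, \Delta(\mu)^{2k} \circ p(D) \circ \Delta(\mu)^{-2k}
\]
inside the rational Cherednik algebra (the smash product $\mathcal{D} \# \CC[S_N]$ of differential operators with the group algebra of $S_N$).

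To conclude, apply the differential projection $m$ to both sides. On one hand, since $\Delta(\mu)^{\pm 2k}$ is pure multiplication and $S_N$-invariant (so commutes past $m$), one has $m(\Delta(\mu)^{2k} \circ p(D) \circ \Delta(\mu)^{-2k}) = \Delta(\mu)^{2k} \circ \oL_p(k) \circ \Delta(\mu)^{-2k}$. On the other hand, writing any $B = \sum_{w \in S_N} B_w \cdot w$ with $B_w$ differential, one has $B^\dagger = \sum_{w} (w B_w^\dagger w^{-1}) \cdot w$ because $w^\dagger = w$ under Lebesgue; in particular the trivial coset is preserved by $\dagger$, so $m(B^\dagger) = m(B)^\dagger$. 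Equating the two sides yields the claim. The only subtlety is justifying the initial integration by parts near the singular locus $\{\mu_i = \mu_j\}$ of the weight, but for $k > 0$ the weight vanishes (or is integrable) there and the argument proceeds by a standard density argument; alternatively, the identity can be proved purely algebraically inside the rational Cherednik algebra using the defining commutation relations of Dunkl operators, which is the main piece of bookkeeping but presents no fundamental obstacle.
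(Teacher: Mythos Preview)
Your strategy coincides with the paper's: both establish the Lebesgue adjoint of a single Dunkl operator, $D_{\mu_i}(k)^\dagger=-\Delta(\mu)^{2k}D_{\mu_i}(k)\Delta(\mu)^{-2k}$ (which you phrase equivalently as antisymmetry for the $\Delta(\mu)^{2k}$-weighted pairing), extend multiplicatively to $p(D)$ using commutativity, and then pass from $p(D)$ to $\oL_p(k)$. The paper carries out this last step by simply testing the identity against symmetric $f,g$ and using that $p(D)=\oL_p(k)$ on symmetric inputs; you instead invoke the projection $m$.

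There is a small but genuine glitch in your $m$-step. The map $m$ for which $m(p(D))=\oL_p(k)$ is \emph{not} ``take the coefficient of the identity element'' but rather ``restrict to symmetric functions,'' i.e.\ $m\!\left(\sum_w B_w\,w\right)=\sum_w B_w$; already for $N=2$ and $p=p_2$ one computes $p(D)=\partial_1^2+\partial_2^2+\tfrac{2k}{\mu_1-\mu_2}(\partial_1-\partial_2)+\tfrac{2k(2k-1)}{(\mu_1-\mu_2)^2}(1-s)$, and the two candidate projections differ by $\tfrac{2k(2k-1)}{(\mu_1-\mu_2)^2}$, with only the ``sum'' version matching $\oL_{p_2}(k)$. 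With the correct $m$, your claim $m(B^\dagger)=m(B)^\dagger$ no longer follows from ``the trivial coset is preserved by $\dagger$.'' It does hold for $B=p(D)$, but the reason is the $S_N$-invariance of $p(D)$: from $\sigma\,p(D)\,\sigma^{-1}=p(D)$ one gets $B_{\sigma w\sigma^{-1}}=\sigma B_w\sigma^{-1}$, and taking $\sigma=w$ gives $wB_w=B_ww$; hence also $wB_w^\dagger=B_w^\dagger w$, so every term $wB_w^\dagger w^{-1}$ in $m(p(D)^\dagger)$ collapses to $B_w^\dagger$, yielding $m(p(D)^\dagger)=\sum_w B_w^\dagger=m(p(D))^\dagger$. (Incidentally, $w^\dagger=w^{-1}$ rather than $w$ for general $w\in S_N$, though this does not affect the conclusion.) Once this is patched, your argument and the paper's are the same.
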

\begin{proof}
The desired is an equality of symmetric differential operators, so it suffices to verify that for smooth compactly supported symmetric functions $f(\mu)$ and $g(\mu)$ we have 
\[
\int_{\RR^{N-1}} f(\mu) \oL_p(k) g(\mu) d\mu = \int_{\RR^{N-1}} (-1)^{\deg p} [\Delta(\mu)^{2k} \oL_p(k) \Delta(\mu)^{-2k} f(\mu)] g(\mu) d\mu.
\]
In this case, we have $\oL_p(k) = m(p(D_{\mu_i}(k)))$, so it suffices to check that for smooth compactly supported functions $f(\mu)$ and $g(\mu)$ we have
\[
\int_{\RR^{N-1}} f(\mu) D_{\mu_i}(k) g(\mu) d\mu = - \int_{\RR^{N-1}} [\Delta(\mu)^{2k} D_{\mu_i}(k) \Delta(\mu)^{-2k} f(\mu)] g(\mu) d\mu.
\]
For this, notice that 
\begin{align*}
\int_{\RR^{N-1}} f(\mu)& D_{\mu_i}(k) g(\mu) d\mu = \int_{\RR^{N-1}} f(\mu)\Big(\partial_i g(\mu) + k \sum_{j \neq i} \frac{g(\mu) - s_{ij} g(\mu)}{\mu_i - \mu_j}\Big) d\mu\\
&= -\int_{\RR^{N-1}} \partial_i f(\mu) g(\mu) d\mu + k \sum_{j \neq i}\int_{\RR^{N-1}} \frac{1}{\mu_i - \mu_j} f(\mu) g(\mu) d\mu + k \sum_{j \neq i} \int_{\RR^{N-1}}\frac{s_{ij}f(\mu) g(\mu)}{\mu_i - \mu_j} d\mu\\
&= -\int_{\RR^{N-1}} \Big(\partial_i - k \sum_{j \neq i} \frac{1 + s_{ij}}{\mu_i - \mu_j}\Big) f(\mu) g(\mu) d\mu
\end{align*}
and that 
\begin{align*}
\Delta(\mu)^{2k} \circ D_{\mu_i}(k) \circ \Delta(\mu)^{-2k} &= \partial_i - 2k \sum_{j \neq i} \frac{1}{\mu_i - \mu_j} + k \sum_{j \neq i} \frac{1 - s_{ij}}{\mu_i - \mu_j}= \partial_i - k \sum_{j \neq i} \frac{1 + s_{ij}}{\mu_i - \mu_j}. \qedhere
\end{align*}
\end{proof}
We now make precise the adjunction when integrated against a specific domain.  For multi-indices $\alpha = (\alpha_i)$ and $\beta = (\beta_i)$, write $\beta \leq \alpha$ if $\alpha_i \leq \beta_i$ for all $i$.

\begin{prop} \label{prop:rat-formal-adjoint}
Let $A$ be a rectangular domain.  Let $p = \sum_\alpha c_\alpha \mu^\alpha$ be a symmetric homogeneous function.  If for each non-zero monomial $\mu^\alpha$ appearing in $p$, $\partial_\mu^\beta f$ vanishes on the boundary of $A$ for any $\beta \leq \alpha$, then we have the adjunction relation
\[
\int_A (\overline{L}_p(k)f(\mu))\, g(\mu) d\mu = \int_A f(\mu)\, \oL_p(k)^\dagger(g(\mu)) d\mu.
\]
\end{prop}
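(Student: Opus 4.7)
The strategy is a direct integration-by-parts computation extending Proposition~\ref{prop:cm-rat-adj} from the setting of compactly supported test functions to the bounded rectangular domain $A$. By linearity of both sides in $p$, it suffices to establish the adjunction one monomial at a time: for each $\alpha$ with $c_\alpha \neq 0$, one proves the identity for $p$ replaced by $\mu^\alpha$, where the expression $\oL_{\mu^\alpha}(k) := m(D_\mu^\alpha)$ is well-defined by the commutativity of Dunkl operators, and under the hypothesis that $\partial^\beta f|_{\partial A} = 0$ for all $\beta \leq \alpha$. Expanding the Dunkl product and projecting onto the differential part (i.e.\ those terms whose reflection swaps compose to the identity), one obtains an explicit representation
\[
\oL_{\mu^\alpha}(k) = \sum_\gamma a_\gamma(\mu)\, \partial^\gamma
\]
with rational coefficients $a_\gamma(\mu)$ built from factors $1/(\mu_i - \mu_j)$.

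For each summand $a_\gamma \partial^\gamma f \cdot g$ appearing in $(\oL_p f)\, g$, the plan is to integrate by parts $\gamma_i$ times in the variable $\mu_i$, for each $i$. The bulk contributions reassemble, term by term, into $\int_A f \cdot \oL_p(k)^\dagger g\, d\mu$ by the very definition of the formal adjoint (which is consistent with the explicit conjugation formula of Proposition~\ref{prop:cm-rat-adj}). Each such integration by parts in $\mu_i$ additionally produces a boundary integral over the two faces $\mu_i = a_i, b_i$ of the rectangle, whose integrand involves $\partial^\delta f$ for some multi-index $\delta$ strictly below $\gamma$ componentwise.

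The key structural observation — and the heart of the argument — is that for every $\gamma$ appearing in $m(D_\mu^\alpha)$ and every such boundary multi-index $\delta < \gamma$, there exists a monomial $\alpha'$ in the $S_N$-orbit of $\alpha$, hence by symmetry a monomial of $p$, with $\delta \leq \alpha'$ componentwise. Intuitively, each $\partial$ appearing in $m(D_\mu^\alpha)$ originates from a $\partial_i$-factor of some $D_{\mu_i}$ with $\alpha_i > 0$, and the index-swaps induced by pairs of canceling reflections merely permute these initial indices within the $S_N$-orbit of the support of $\alpha$; upon removing one derivative to form $\delta$, the remaining indices are dominated by a suitable permutation of $\alpha$. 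Combined with the hypothesis (which is assumed for \emph{every} monomial of $p$, and in particular for all $S_N$-translates of $\alpha$), this kills every boundary integral, leaving only the bulk identity and thus the desired adjunction.

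The principal technical obstacle is the rigorous justification of this structural observation, which amounts to carefully tracking how the reflections in the Dunkl-operator expansion act on derivative indices upon projection to the differential part. This bookkeeping is elementary but delicate — in particular, the componentwise bound $\gamma \leq \alpha$ for a \emph{single} monomial $\alpha$ can genuinely fail, so it is essential to use the $S_N$-symmetry of $p$ to cover boundary multi-indices $\delta$ that the original source monomial $\alpha$ alone does not dominate.
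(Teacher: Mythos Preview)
Your approach is exactly the paper's: repeated integration by parts, with all boundary contributions killed by the hypothesis. The paper's proof is a two-sentence sketch making the same structural assertion you call ``the heart of the argument,'' so at the level of strategy there is nothing to distinguish.

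However, the structural claim you isolate --- that every boundary multi-index $\delta$ is componentwise dominated by some $\alpha'$ in the $S_N$-orbit of $\alpha$ --- is false as stated. Take $N=2$ and $p = p_6 = \mu_1^6 + \mu_2^6$. A direct expansion of $m(D_{\mu_1}^6)$ shows that $\partial_1^2\partial_2^2$ occurs with nonzero coefficient (from terms of the shape $P^a(Bs_{12})P'^2(Bs_{12})P^c$ with $a+c=2$), and this survives in $m(D_{\mu_1}^6 + D_{\mu_2}^6)$. Integrating $a_{(2,2)}\,\partial_1^2\partial_2^2 f \cdot g$ by parts once in $\mu_1$ produces a boundary term on the $\mu_1$-faces involving $\partial^{(1,2)} f$, and $(1,2)$ is dominated by neither $(6,0)$ nor $(0,6)$. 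So your intuitive justification via index-permutation does not cover this case, and the same imprecision lurks behind the paper's one-line proof.

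The fix is short and does not require tracking permutations at all: use that the hypothesis gives vanishing of \emph{pure} normal derivatives on each face, and then differentiate tangentially. Concretely, on the face $\mu_i = c$ the hypothesis forces $\partial_i^{j} f \equiv 0$ for every $j \le M := \max_\alpha \alpha_i$; since this vanishes identically on the face, applying any tangential derivative $\partial_{i'}^{\,l}$ ($i'\neq i$) still gives zero, so $\partial^\delta f = 0$ on that face whenever $\delta_i \le M$. Now every boundary term arising from an integration by parts in $\mu_i$ has $\delta_i \le \gamma_i - 1$, and one checks (this is the easy half of your bookkeeping) that each $\gamma_i$ occurring in $\oL_p(k)$ is at most the total degree in any single variable, hence at most $M$. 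That closes the gap cleanly.
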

\begin{proof}
Applying repeated integration by parts with Proposition \ref{prop:cm-rat-adj} shows the two sides of the desired relation differ by the sum of several terms, each of which contains a factor which is the evaluation of $\partial_\mu^\beta(f)$ on a point of the boundary of $A$ for some $\beta \leq \alpha$ with $\mu^\alpha$ appearing in $p$.  These terms vanish, giving the lemma.
\end{proof}

\subsection{A matrix element computation}

Recall that sequences $\{\lambda_i\}_{1 \leq i \leq N}$ and $\{\mu_i\}_{1 \leq i \leq N - 1}$ \textit{interlace} if
\[
\lambda_1 \geq \mu_1 \geq \lambda_2 \geq \cdots \geq \lambda_{N - 1} \geq \mu_{N - 1} \geq \lambda_N,
\]
which we denote by $\mu \prec \lambda$.  Define the real matrix $u(\mu, \lambda)$ by 
\[
u(\mu, \lambda)_{ij} = \begin{cases} \left(\frac{\prod_l (\mu_l - \lambda_j)}{\prod_{l \neq j} (\lambda_l - \lambda_j)}\right)^{1/2} & i = N \\ (\lambda_j - \mu_i)^{-1}\left(\frac{\prod_l (\mu_l - \lambda_j)}{\prod_{l \neq j} (\lambda_l - \lambda_j)}\right)^{1/2} \left(-\frac{\prod_l (\lambda_l - \mu_i)}{\prod_{l \neq i} (\mu_l - \mu_i)}\right)^{1/2} & i < N,
\end{cases}
\]
where each square root is applied to a non-negative real number because $\mu \prec \lambda$, and we take the non-negative branch. The following lemma, whose proof is given in Section \ref{subsec:unit}, shows that $u(\mu, \lambda)$ conjugates a diagonal matrix to a matrix with diagonal principal submatrix.

\begin{lemma} \label{lemma:uconj}
The matrix $u(\mu, \lambda)$ is unitary, and the $(N - 1) \times (N - 1)$ principal submatrix of 
\[
u(\mu, \lambda) \, \diag(\lambda_1, \ldots, \lambda_N) \, u(\mu, \lambda)^*
\]
is $\diag(\mu_1, \ldots, \mu_{N-1})$.
\end{lemma}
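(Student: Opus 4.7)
The plan is to realize $u(\mu,\lambda)$ as the matrix of normalized eigenvectors of a particular real symmetric arrow matrix $M$ whose principal $(N-1)\times(N-1)$ submatrix equals $\diag(\mu_1,\dots,\mu_{N-1})$ by construction. Concretely, I define $M$ by $M_{ii}=\mu_i$ for $i<N$, $M_{iN}=M_{Ni}=m_i$ for $i<N$, and $M_{NN}=a$, where
\[
m_i = \left(-\frac{\prod_l(\lambda_l-\mu_i)}{\prod_{l\neq i}(\mu_l-\mu_i)}\right)^{1/2}, \qquad a = \sum_j \lambda_j - \sum_i \mu_i.
\]
A sign count using the interlacing $\mu \prec \lambda$ shows that the quantity under the square root is non-negative, so $m_i \geq 0$ is a well-defined real number.

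Next I would verify that $M$ has spectrum $\{\lambda_1,\dots,\lambda_N\}$.  Cofactor expansion along the last row yields
\[
\det(\lambda I - M) = (\lambda-a)\prod_i(\lambda-\mu_i) - \sum_i m_i^2 \prod_{l\neq i}(\lambda-\mu_l),
\]
equivalently, after dividing by $\prod_i(\lambda-\mu_i)$, the rational function $\lambda-a-\sum_i m_i^2/(\lambda-\mu_i)$.  Comparing this with the partial fraction expansion of $\prod_j(\lambda-\lambda_j)/\prod_i(\lambda-\mu_i)$, whose residue at $\lambda=\mu_i$ matches $-m_i^2$ by the defining formula for $m_i$ and whose polynomial part equals $\lambda-a$ by comparing traces, identifies the two rational functions and gives $\det(\lambda I - M) = \prod_j(\lambda-\lambda_j)$.

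For each $j$, the eigenvalue equation $Mv=\lambda_j v$ forces $v_i = m_i v_N/(\lambda_j-\mu_i)$ for $i<N$.  Setting $v_N=1$ and differentiating the identity $\lambda - a - \sum_i m_i^2/(\lambda - \mu_i) = \prod_j(\lambda-\lambda_j)/\prod_i(\lambda-\mu_i)$ in $\lambda$, then evaluating at $\lambda_j$, produces
\[
\|v\|^2 = 1 + \sum_i \frac{m_i^2}{(\lambda_j - \mu_i)^2} = \frac{\prod_{l\neq j}(\lambda_l-\lambda_j)}{\prod_l(\mu_l - \lambda_j)}.
\]
Comparing with the definition of $u(\mu,\lambda)$, the entries $u_{ij}$ coincide exactly with the components of the normalized eigenvector $v/\|v\|$.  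Since $M$ is real symmetric with distinct eigenvalues $\lambda_j$ in the generic regime (the degenerate cases follow by continuity), its normalized eigenvectors form an orthonormal basis, so $u(\mu,\lambda)$ is orthogonal (hence unitary) and $u(\mu,\lambda)\diag(\lambda)u(\mu,\lambda)^* = M$, whose principal $(N-1)\times(N-1)$ submatrix is $\diag(\mu_1,\dots,\mu_{N-1})$ by construction.

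The main obstacle is the careful sign bookkeeping needed to verify that $m_i^2 \geq 0$, that $\|v\|^2>0$, and that the specific branches of the square roots appearing in the definition of $u(\mu,\lambda)$ agree with the ones dictated by the eigenvector construction.  These follow by tracking the alternating signs of the factors $\lambda_l - \mu_i$ and $\mu_l - \mu_i$ under the interlacing $\mu\prec\lambda$, but they are elementary once organized.
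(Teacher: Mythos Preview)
Your proof is correct and rests on the same arrow matrix the paper uses (the paper calls it $\mu'$, with your $m_i$ being its $x_i$ and your $a$ its $y$). The organization differs: the paper verifies the matrix identity $u\,\diag(\lambda)=\mu' u$ row by row via rational function degree-and-leading-coefficient arguments, and then checks orthonormality of the columns of $u$ by a separate computation of the same flavor. You instead compute $\det(\lambda I - M)$ by partial fractions to identify the spectrum, read off the eigenvectors, and obtain their norms by differentiating the rational identity at $\lambda=\lambda_j$; unitarity then comes for free from the spectral theorem. Your route is a bit more conceptual and avoids the ad hoc identities (the paper's equations for the bottom row and for column orthogonality), at the cost of invoking a genericity-plus-continuity step for distinct eigenvalues. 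Either way the content is the same.
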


We would like to understand a specific matrix element of $u(\mu, \lambda)$ in $W_{k-1}$.  For this, notice that $W_{k-1} \simeq \Sym^{(k-1)N}\CC^N$ as an $SU_N$-representation via an isomorphism sending $w_{k-1}$ to $(x_1\cdots x_N)^{k-1}$.  We now compute an auxiliary quantity.  Let $Z_k(\mu, \lambda)$ denote the coefficient of $(x_1 \cdots x_l)^k$ in the polynomial
\[
\frac{1}{(l - N + 1)!}\prod_{j = 1}^l \left(\sum_{i = 1}^{N-1} \frac{x_i}{\mu_i - \lambda_j} + x_N + \cdots + x_l\right)^k.
\]
By Proposition \ref{prop:matrix-element}, we may express $Z_k(\mu, \lambda)$ via a conjugated Calogero-Moser Hamiltonian, where we recall that $\overline{L}_p(k)$ was defined in (\ref{eq:conj-cm-ham-def}); we defer the proof to Section \ref{subsec:matrix-element-proof}.  The computation of the desired matrix element of $u(\mu, \lambda)$ is an easy consequence.

\begin{prop} \label{prop:matrix-element}
We may write
\[
Z_k(\mu, \lambda) = k!^{-(N - 1)} \Delta(\mu, \lambda)^{-k} \overline{L}_{\mu_{N-1} \cdots \mu_1}(-k)^k \Delta(\mu, \lambda)^k.
\]
\end{prop}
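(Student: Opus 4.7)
My plan is to compute both sides as explicit sums of rational functions in the $\mu_i$'s and $\lambda_j$'s and match them term by term. Unwinding \eqref{eq:conj-cm-ham-def} gives
\[
\oL_{e_{N-1}}(-k)^k \Delta(\mu,\lambda)^k = \Delta(\mu)^k\, L_{e_{N-1}}(-k)^k\bigl[\Delta(\mu)^{-k}\Delta(\mu,\lambda)^k\bigr],
\]
so the key computation is the action of $L_{e_{N-1}}(-k)^k$ on $\Delta(\mu)^{-k}\Delta(\mu,\lambda)^k$. Since the rational Dunkl operators $D_i := D_{\mu_i}(-k)$ from \eqref{eq:rat-dunkl-def} commute and $e_{N-1}(D_1, \ldots, D_{N-1}) = D_1 \cdots D_{N-1}$, the operator $L_{e_{N-1}}(-k)$ is obtained by restricting $D_1 \cdots D_{N-1}$ to the symmetric subspace, and computing $L_{e_{N-1}}(-k)^k$ amounts, after separating the signed symmetric structure, to iterating the product $D_1 \cdots D_{N-1}$ a total of $k$ times.

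Since $\Delta(\mu, \lambda)^k$ is symmetric in $\mu$, each first Dunkl application reduces to a derivative: $D_i \Delta(\mu,\lambda)^k = k \Delta(\mu,\lambda)^k \sum_{j=1}^l (\mu_i-\lambda_j)^{-1}$, introducing a choice of factor index $j \in \{1, \ldots, l\}$. As we iterate, each subsequent Dunkl application either contributes another such factor via its differential part, or produces a correction from its rational-difference part acting on the non-symmetric intermediate produced by earlier steps. The key identity governing these corrections is
\[
\frac{1}{\mu_i - \mu_{i'}}\Bigl(\frac{1}{\mu_i - \lambda_j} - \frac{1}{\mu_{i'} - \lambda_j}\Bigr) = -\frac{1}{(\mu_i - \lambda_j)(\mu_{i'} - \lambda_j)},
\]
which converts the asymmetric difference terms into the product form required to match the left-hand side, while the $(\mu_i - \mu_{i'})^{-1}$-type residues produced along the way are absorbed by the outer conjugation $\Delta(\mu)^{\pm k}$. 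After $k(N-1)$ total Dunkl applications, the result should be a sum, over ordered assignments $(i, a) \mapsto j^{(i)}_a \in \{1, \ldots, l\}$, of rational expressions of the form $k^{k(N-1)} \Delta(\mu, \lambda)^k \prod_{i, a}(\mu_i - \lambda_{j^{(i)}_a})^{-1}$.

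On the other side, expanding the defining polynomial of $Z_k(\mu, \lambda)$ via the multinomial theorem and extracting the coefficient of $(x_1 \cdots x_l)^k$ yields a sum over non-negative integer matrices $(\alpha^{(j)}_i)$ with row sums $k$ in $j$ and column sums $k$ in $i$, weighted by $\prod_j \binom{k}{\alpha^{(j)}}\prod_{i < N}(\mu_i - \lambda_j)^{-\alpha^{(j)}_i}$; rewriting $\alpha^{(j)}_i = |\{a : j^{(i)}_a = j\}|$ for ordered maps $j^{(i)} : \{1, \ldots, k\} \to \{1, \ldots, l\}$ produces exactly the combinatorial structure of the Dunkl side, with a factor $k!^{N-1}$ accounting for the relative overcounting of ordered versus unordered tuples. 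The main obstacle is the careful bookkeeping of non-symmetric intermediate expressions in the iterated Dunkl computation: each intermediate step produces both "product" and "difference" type contributions, and I expect to handle the reconciliation by induction on the total number of Dunkl applications, tracking at each stage a sum indexed by partial assignments $(i, a) \mapsto j^{(i)}_a$ and verifying inductively that the non-differential corrections either cancel internally or contribute exactly the next required term.
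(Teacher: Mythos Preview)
Your plan has a genuine gap, and it is not the approach the paper takes.

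\textbf{The gap.} You propose to iterate $D_{\mu_1}(-k)\cdots D_{\mu_{N-1}}(-k)$ a total of $k$ times on $\Delta(\mu,\lambda)^k$ and claim the result is a sum over ordered assignments $(i,a)\mapsto j^{(i)}_a$ of terms $k^{k(N-1)}\Delta(\mu,\lambda)^k\prod_{i,a}(\mu_i-\lambda_{j^{(i)}_a})^{-1}$. This is already false for $N=2$, $l=2$, $k=2$, where the Dunkl operator is simply $\partial_{\mu_1}$: one computes
\[
\partial_{\mu_1}^2\bigl[(\mu_1-\lambda_1)^2(\mu_1-\lambda_2)^2\bigr]\big/\Delta(\mu,\lambda)^2
= \frac{2}{(\mu_1-\lambda_1)^2}+\frac{8}{(\mu_1-\lambda_1)(\mu_1-\lambda_2)}+\frac{2}{(\mu_1-\lambda_2)^2},
\]
whereas your claimed form gives $4/a^2+8/(ab)+4/b^2$. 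The discrepancy comes from a contribution you did not account for: when you apply $\partial_{\mu_i}$ in a later pass, it also hits the factors $(\mu_i-\lambda_j)^{-1}$ produced in earlier passes, yielding $-(\mu_i-\lambda_j)^{-2}$. Your ``key identity'' handles only the difference part $(1-s_{ii'})$ of the Dunkl operator, not this second source of correction. Relatedly, your combinatorial matching with $Z_k$ does not balance: the multinomial weights $\prod_j\binom{k}{\alpha^{(j)}}$ are indexed by $j$ and run over $i$, while the ordered-tuple overcount $\prod_i k!/\prod_j\alpha^{(j)}_i!$ is indexed the other way; these only coincide when all $\alpha^{(j)}_i\in\{0,1\}$, i.e.\ in the $k=1$ case. (There is also a smaller slip: it is $\oL_p$, not $L_p$, that equals the symmetric restriction of $p(D_i)$; your detour through $L_{e_{N-1}}$ and the outer $\Delta(\mu)^{\pm k}$ conjugation is unnecessary and does not ``absorb residues'' as you suggest.)

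\textbf{What the paper does instead.} The paper avoids the iterated computation entirely. It first proves the $Z_1$ case for an arbitrary parameter $\kappa$ (Lemma~\ref{lemma:k1-matrix-element}), where your key identity does suffice and the induction is on the number of Dunkl operators, not on $k$. It then observes that $Z_k(\mu,\lambda)$ is, up to $k!^{N-1}$, the specialization of $Z_1(\{\mu_i^j\},\{\lambda_i^j\})$ with $k$ copies of each variable. The passage from the $k(N-1)$-variable Dunkl expression at parameter $-1/k$ to the $(N-1)$-variable expression at parameter $-k$ is handled by a separate lemma (Lemma~\ref{lemma:cm-eq}) relating spherical parts of rational Cherednik algebras of different rank via the restriction map $\mu_i^j\mapsto\mu_i$; its proof uses the $\sl_2$-triple inside the Cherednik algebra. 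This duplication trick is the main idea you are missing, and it sidesteps exactly the bookkeeping that breaks your direct approach.
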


\begin{remark}
It is convenient for us to formulate and prove Proposition \ref{prop:matrix-element} for general $l$.  However, in our main application Lemma \ref{lemma:liouville-int}, it will be only be used with $l = N$.
\end{remark}

\begin{lemma} \label{lemma:liouville-int}
The coefficient of $(x_1\cdots x_{N-1})^{k-1}$ in $u(\mu, \lambda) \cdot (x_1 \cdots x_{N-1})^{k-1}$ is 
\[
(-1)^{(k-1)(N + 2)(N-1)/2}(k - 1)!^{-(N-1)} \Delta(\mu)^{1-k} \Delta(\lambda)^{1 - k} (\overline{L}_{\mu_1\cdots \mu_{N-1}}(1 - k))^{k - 1} \Delta(\mu, \lambda)^{k-1}.
\]
\end{lemma}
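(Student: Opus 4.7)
The plan is to compute the matrix coefficient by direct expansion and reduce the computation to Proposition \ref{prop:matrix-element} at $l = N$, as anticipated by the remark following that proposition. Using the explicit formula $u(\mu,\lambda)_{ij} = (\lambda_j - \mu_i)^{-1}A_j^{1/2}B_i^{1/2}$ for $i < N$ and $u(\mu,\lambda)_{Nj} = A_j^{1/2}$, where $A_j$ and $B_i$ denote the positive quantities appearing inside the square roots in the definition, one writes
\[
u(\mu,\lambda) \cdot (x_1\cdots x_N)^{k-1} = \prod_{j=1}^N A_j^{(k-1)/2}\cdot\prod_{j=1}^N\!\left(x_N + \sum_{i=1}^{N-1}\frac{B_i^{1/2}}{\lambda_j - \mu_i}\,x_i\right)^{\!k-1}.
\]
After the substitution $y_i = B_i^{1/2} x_i$ followed by the sign flip $y_i \mapsto -y_i$, which contributes a factor $(-1)^{(N-1)(k-1)}$ while converting $\lambda_j - \mu_i$ into $\mu_i - \lambda_j$, the target coefficient takes the form
\[
\prod_{j=1}^N A_j^{(k-1)/2}\prod_{i=1}^{N-1} B_i^{(k-1)/2}\cdot(-1)^{(N-1)(k-1)}\,Z_{k-1}(\mu,\lambda),
\]
where $Z_{k-1}(\mu,\lambda)$ is exactly the coefficient in Proposition \ref{prop:matrix-element} at $l = N$ with $k$ replaced by $k - 1$.

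Proposition \ref{prop:matrix-element} then rewrites
\[
Z_{k-1}(\mu,\lambda) = (k-1)!^{-(N-1)}\Delta(\mu,\lambda)^{-(k-1)}\bigl(\oL_{\mu_1\cdots\mu_{N-1}}(1-k)\bigr)^{k-1}\Delta(\mu,\lambda)^{k-1},
\]
using that $\oL_{\mu_{N-1}\cdots\mu_1} = \oL_{\mu_1\cdots\mu_{N-1}}$ as symmetric polynomials. It remains to identify the product $\prod_j A_j^{(k-1)/2}\prod_i B_i^{(k-1)/2}$ with the leading Vandermonde factors in the lemma. Multiplying out the definitions of $A_j$ and $B_i$ and cancelling repeated factors, I would verify
\[
\prod_{j=1}^N A_j \cdot \prod_{i=1}^{N-1} B_i = \frac{\Delta(\mu,\lambda)^2}{\Delta(\mu)^2\,\Delta(\lambda)^2},
\]
whose left side is positive by the interlacing $\mu \prec \lambda$. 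Taking positive $(k-1)/2$-th powers yields $|\Delta(\mu,\lambda)|^{k-1}\Delta(\mu)^{1-k}\Delta(\lambda)^{1-k}$.

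To complete the proof one passes from $|\Delta(\mu,\lambda)|^{k-1}$ to $\Delta(\mu,\lambda)^{k-1}$, picking up a sign $(-1)^{N(N-1)(k-1)/2}$ because $\operatorname{sign}\Delta(\mu,\lambda) = (-1)^{N(N-1)/2}$ under interlacing. The resulting $\Delta(\mu,\lambda)^{k-1}$ then cancels against the $\Delta(\mu,\lambda)^{-(k-1)}$ coming from Proposition \ref{prop:matrix-element}, leaving $\oL_{\mu_1\cdots\mu_{N-1}}(1-k)^{k-1}\Delta(\mu,\lambda)^{k-1}$ as desired, and the two accumulated signs add to
\[
\tfrac{N(N-1)(k-1)}{2} + (N-1)(k-1) = \tfrac{(k-1)(N-1)(N+2)}{2},
\]
which matches the exponent in the lemma. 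The main obstacle is the careful sign bookkeeping in the steps above together with the explicit verification of the Vandermonde identity; once these are established, Proposition \ref{prop:matrix-element} supplies all of the nontrivial analytic content.
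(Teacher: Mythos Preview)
Your argument is exactly the paper's: both identify the desired matrix coefficient, via the explicit entries of $u(\mu,\lambda)$ from Lemma~\ref{lemma:uconj}, with $(-1)^{(k-1)(N+2)(N-1)/2}\Delta(\mu,\lambda)^{k-1}\Delta(\mu)^{1-k}\Delta(\lambda)^{1-k}\,Z_{k-1}(\mu,\lambda)$ at $l=N$, and then invoke Proposition~\ref{prop:matrix-element}. The paper records only that one displayed line; you have simply supplied the sign and Vandermonde bookkeeping it suppresses (and your check $\prod_j A_j\prod_i B_i=\Delta(\mu,\lambda)^2/\Delta(\mu)^2\Delta(\lambda)^2$ together with $\operatorname{sign}\Delta(\mu,\lambda)=(-1)^{N(N-1)/2}$ is correct). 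One minor slip: in your first displayed formula you wrote $u(\mu,\lambda)\cdot(x_1\cdots x_N)^{k-1}$ rather than the $(x_1\cdots x_{N-1})^{k-1}$ in the lemma; this does not affect the logic, since in either reading the relevant coefficient is precisely $Z_{k-1}$ at $l=N$, as both you and the paper use.
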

\begin{proof}
By Lemma \ref{lemma:uconj}, the desired coefficient is given by 
\[
(-1)^{(k - 1)(N+2)(N-1)/2}\frac{\Delta(\mu, \lambda)^{k-1}}{\Delta(\mu)^{k-1} \Delta(\lambda)^{k-1}} Z_{k-1}(\mu, \lambda),
\]
which is equal to the desired by Proposition \ref{prop:matrix-element}.
\end{proof}

\subsection{Proof of Theorem \ref{thm:rational}}

Integrating over Liouville tori of the Gelfand-Tsetlin integrable system on $\OO_\lambda$ yields the expression
\[
\psi_k(\lambda, s) = \int_{\mu \in \GT_\lambda} \int_{t \in T, X_0 \in \gt^{-1}(\mu)} f_{k-1}(t \cdot X_0) dt\,\, e^{\sum_{l = 1}^N s_l (\sum_i \mu^l_i - \sum_i \mu^{l-1}_i)} \gt_*(d\mu_\lambda),
\]
where $dt$ is an invariant probability measure on $T$ and $\mu^l_i$ are the Gelfand-Tsetlin coordinates.  Recall that $\gt_*(d\mu_\lambda)$ is equal to Lebesgue measure on $\GT_\lambda$ by Proposition \ref{prop:dh-compute}. Adopting the notations of Section \ref{ss:gt-int-sys}, by repeated application of  Lemma \ref{lem:gt-conj} we have for $t = t_1 \cdots t_{N-1}$ in the Gelfand-Tsetlin torus that
\[
t \cdot X_0 = \ad(\overline{v}_1)t_1 \cdot \ad(\overline{v}_2) \cdots  t_{N-1} \cdot\ad(\overline{v}_N) \cdot \lambda.
\]
On the other hand, if $w \in W_{k - 1}$ lies in $\CC[x_1, \ldots, x_l] (x_{l+1} \cdots x_N)^{k - 1}$ under the identification of $W_{k-1} \simeq \Sym^{(k-1)N}\CC^N$ of $SU_N$-representations, then
\[
\int_{T_l} t_l \cdot w\, dt_l = \{\text{coefficient of $(x_1 \cdots x_N)^{k - 1}$ in $w$}\}.
\]
Together, these imply that 
\[
\int_{t \in T, X_0 \in \gt^{-1}(\mu)} f_{k-1}(t \cdot X_0)\, dt = \prod_{m = 1}^{N - 1} W_{m},
\]
where $W_{m}$ denotes the coefficient of $(x_1 \cdots x_{m})^{k-1}$ in $v_m \cdot (x_1 \cdots x_m)^{k-1}$.  Substituting in this result, inducting on $N$, applying the integral formula (\ref{eq:int-rat}), and applying the shift formula
\begin{equation} \label{eq:bessel-shift}
e^{c \sum_i \mu_i} \phi_k(\mu, s) = \phi_k(\mu, s_1 + c, \ldots, s_{N-1} + c)
\end{equation}
for the integral expressions (\ref{eq:int-rat}) in $N - 1$ variables with $c = - s_N$, we obtain
\begin{align*}
\psi_k(\lambda, s) &= \int_{\mu \in \GT_\lambda} \prod_{m = 1}^{N-1} W_m e^{\sum_{l = 1}^N s_l(\sum_i \mu^l - \sum_i \mu^{l-1})} \prod_l d\mu^l_i\\
&= \int_{\mu \prec \lambda} W_{N-1} e^{s_N(\sum_i \lambda - \sum_i \mu)} \!\!\!\! \prod_{1 \leq i < j \leq N - 1} (s_i - s_j)^{k - 1} \phi_k(\mu, s)\prod_i d \mu_i\\
&= e^{s_N \sum_i \lambda_i} \prod_{1 \leq i < j \leq N - 1} (s_i - s_j)^{k - 1} \int_{\mu \prec \lambda} W_{N-1} \phi_k(\mu, s') \prod_i d\mu_i,
\end{align*}
where $s' = (s_1 - s_N, \ldots, s_{N-1} - s_N)$.  Recall now that $v_m$ was chosen so that $(v_m \, \diag(\mu^{m+1}) v_m^*)_{m} = \diag(\mu^m)$, meaning by Lemma \ref{lemma:liouville-int} that 
\[
W_m = \frac{(-1)^{(k-1)(m + 3)(m)/2}(k - 1)!^{-m}}{\Delta(\mu^m)^{k - 1}\Delta(\mu^{m+1})^{k-1}} (\overline{L}_{\mu_1 \cdots \mu_{m}}(1-k))^{k-1} \Delta(\mu^m, \mu^{m+1})^{k-1}.
\]
Substituting this into the previous expression, we find that
\begin{multline*}
\psi_k(\lambda, s) = (-1)^{\frac{(k-1)(N + 2)(N-1)}{2}}e^{s_N \sum_i \lambda_i} \Gamma(k)^{-(N-1)}  \prod_{1 \leq i < j \leq N - 1}\!\!\!\!\! (s_i - s_j)^{k - 1}\Delta(\lambda)^{1 - k} \\ \int_{\mu \prec \lambda}\!\! \Delta(\mu)^{1 - k} \phi_k(\mu, s') \overline{L}_{\mu_1 \cdots \mu_{N-1}}(1-k)^{k-1} \Delta(\mu, \lambda)^{k-1} \prod_i d\mu_i.
\end{multline*}
Recall that $L_p(k) = L_p(1 - k)$ for any symmetric polynomial $p$, which by (\ref{eq:conj-cm-ham-def}) implies that
\begin{equation} \label{eq:cm-shift}
\oL_p(k) = \Delta(\mu)^{1 - 2k} \circ \oL_p(1 - k) \circ \Delta(\mu)^{2k - 1}.
\end{equation}
Recall also that 
\[
\overline{L}_{\mu_1 \cdots \mu_{N-1}}(1-k)^\dagger = (-1)^{N - 1} \Delta(\mu)^{2 - 2k} \circ \oL_{\mu_1\cdots \mu_{N-1}}(1 - k) \circ \Delta(\mu)^{2k - 2}
\]
by Proposition \ref{prop:cm-rat-adj}.  Applying Proposition \ref{prop:rat-formal-adjoint}, we have that
\begin{align*}
\int_{\mu \prec \lambda} &\Delta(\mu)^{1 - k} \phi_k(\mu, s') \overline{L}_{\mu_1 \cdots \mu_{N-1}}(1-k)^{k-1} \Delta(\mu, \lambda)^{k-1} \prod_i d\mu_i \\
&= (-1)^{(N-1)(k-1)}\int_{\mu \prec \lambda} \Delta(\mu, \lambda)^{k-1} \Delta(\mu)^{2 - 2k} \overline{L}_{\mu_1 \cdots \mu_{N-1}}(1-k)^{k-1} \Delta(\mu)^{k - 1} \phi_k(\mu, s') \prod_i d\mu_i \\
&= (-1)^{(N-1)(k-1)}\int_{\mu \prec \lambda} \Delta(\mu, \lambda)^{k-1} \Delta(\mu) \oL_{\mu_1 \cdots \mu_{N-1}}(k)^{k-1} \frac{\phi_k(\mu, s')}{\Delta(\mu)^k} \prod_i d\mu_i \\
&= (-1)^{(N-1)(k-1)}\prod_{i = 1}^{N-1} (s_i - s_N)^{k-1} \int_{\mu \prec \lambda} \frac{\Delta(\mu, \lambda)^{k-1}}{\Delta(\mu)^{k-1}} \phi_k(\mu, s') \prod_i d\mu_i,
\end{align*}
where in the first equality we apply adjunction, in the second equality we apply (\ref{eq:cm-shift}), and in the third equality we apply the inductive hypothesis and (\ref{eq:bessel-ef}). We conclude that 
\begin{align*}
\psi_k(\lambda, s) &=  (-1)^{\frac{(k-1)N(N-1)}{2}} e^{s_N \sum_i \lambda_i} \Gamma(k)^{-(N-1)}  \prod_{1 \leq i < j \leq N }\!\!\!\!\! (s_i - s_j)^{k - 1} \int_{\mu \prec \lambda} \frac{\Delta(\mu, \lambda)^{k-1}}{\Delta(\mu)^{k - 1}\Delta(\lambda)^{k-1}} \phi_k(\mu, s')\prod_i d\mu_i\\
&= \prod_{1 \leq i < j \leq N}(s_i - s_j)^{k - 1} \phi_k(\lambda, s),
\end{align*}
which implies the result by Theorem \ref{thm:bess-bg-int}.

\section{The trigonometric case} \label{sec:trig}
\subsection{Identifying $\Phi_k(\lambda, s)$ with the Heckman-Opdam hypergeometric functions} \label{sec:ho-fn}

In this subsection, we provide details of how to relate the integral formula (\ref{eq:ho-scale}) for $\Phi_k(\lambda, s)$ to the Heckman-Opdam hypergeometric function $\FF_k(\lambda, s)$ for the case where $k > 0$ is a positive integer.  We use the characterization of Theorem \ref{thm:ho-uniq}.  First, we claim the symmetric extension of $\Delta^\trig(\lambda)^{-k} \Phi_k(\lambda, s)$ extends to a holomorphic function of $\lambda$ on a symmetric tubular neighborhood of $\RR^N$.  Observe that (\ref{eq:ho-scale}) has recursive structure
\begin{align} \label{eq:recur}
\frac{\Phi_k(\lambda, s)}{\Delta(e^\lambda)^k} &= \frac{(-1)^{\frac{(k-1)N(N-1)}{2}}}{\Gamma(k)^{N-1}}\int_{\mu \prec \lambda}\!\!\!\!\!\!\!\! e^{s_N (\sum_i \lambda_i - \sum_i \mu_i) - (k-1)\sum_i \mu_i} \frac{\Delta(e^\mu, e^\lambda)^{k-1} \Delta(e^\mu)}{\Delta(e^\lambda)^{2k-1}} \frac{\Phi_k(\mu, s_1, \ldots, s_{N-1})}{\Delta(e^\mu)^k} d\mu.
\end{align}
We induct on $N$ with trivial base case.  For the inductive step, change variables to $\tau_i = \frac{\mu_i - \lambda_{i+1}}{\lambda_i - \lambda_{i+1}}$. We obtain
\begin{multline*}
\frac{\Phi_k(\lambda, s)}{\Delta(e^\lambda)^k} = \frac{(-1)^{\frac{(k-1)N(N-1)}{2}}}{\Gamma(k)^{N-1}} \int_{[0, 1]^{N-1}} \prod_i (\lambda_i - \lambda_{i+1})  e^{s_N (\sum_i \lambda_i - \sum_i \mu_i) - (k-1)\sum_i \mu_i} \\
\frac{\Delta(e^\mu, e^\lambda)^{k-1} \Delta(e^\mu)}{\Delta(e^\lambda)^{2k-1}} \frac{\Phi_k(\mu, s_1, \ldots, s_{N-1})}{\Delta(e^\mu)^k} d\tau,
\end{multline*}
where we view $\mu$ as a function of $\tau$ and $\lambda$ in the integrand.  As a function of $\lambda$, the integrand is meromorphic with poles away from the set $\{\lambda_i \neq \lambda_j\}$.  It is easy to check that there are no poles on the subsets of hyperplanes $\lambda_i = \lambda_{i+1}$ where no other coordinates are equal, so by Hartog's theorem, the integrand is holomorphic in $\lambda$.  By the induction hypothesis, it is also holomorphic in $\tau$, hence the result is holomorphic in $\lambda$ and admits the claimed extension.

We now claim that $\frac{\Phi_k(\lambda, s)}{\Delta^\trig(\lambda)^k}$ satisfies the hypergeometric system; it suffices to show
\[
\oL_p^\trig(k) \frac{\Phi_k(\lambda, s)}{\Delta^\trig(\lambda)^k} = p(s)\frac{\Phi_k(\lambda, s)}{\Delta^\trig(\lambda)^k}
\]
for any symmetric $p$. It was shown in \cite[Proposition 6.3]{BG} that
\[
\oL_{p_2}^\trig(k)\frac{\Phi_k(\lambda, s)}{\Delta^\trig(\lambda)^k} = p_2(s) \frac{\Phi_k(\lambda, s)}{\Delta^\trig(\lambda)^k}
\]
for $\lambda_1 > \cdots > \lambda_N$.  Suppose now that $s$ is not integral and dominant (if $s$ is not dominant, choose $w \in S_N$ so that $ws$ is dominant and replace $s$ by $ws$).  We claim by induction that  $\frac{\Phi_k(\lambda, s)}{\Delta^\trig(\lambda)^k}$ admits a series expansion of the form
\[
\frac{\Phi_k(\lambda, s)}{\Delta^\trig(\lambda)^k} = \prod_{a = 0}^{k - 1} \prod_{i < j} (s_i - s_j + a)^{-1} e^{(s - k \rho, \lambda)} + (\text{l.o.t.}),
\]
where we use $(\text{l.o.t.})$ to denote terms of the form $c_\alpha e^{(s - k \rho - \alpha, \lambda)}$ for $\alpha$ in the positive weight lattice.  By (\ref{eq:recur}), the leading monomial is the same as the leading monomial of
\begin{equation} \label{eq:lead-mon}
\frac{e^{(s_N + \frac{k(N-1)}{2})|\lambda|} e^{(k - 1)(\rho, \lambda) + \frac{(N - 3)(k - 1)}{2}|\lambda| + (k - 1) \lambda_N}}{\Gamma(k)^{N-1} \prod_{a = 0}^{k - 1} \prod_{1\leq i < j \leq N - 1} (s_i - s_j + a) \Delta(e^\lambda)^{2k - 1}} \int_{\mu \prec \lambda} \prod_{i = 1}^{N-1} (e^{\lambda_i} - e^{\mu_i})^{k-1} e^{(\mu, s')} \prod_i d\mu_i,
\end{equation}
where $s' = (s_1 - s_N, \ldots, s_{N-1} - s_N)$.  Note that for $b_2 < b_1$, the expression
\begin{align*}
\int_{b_2}^{b_1} (e^{b_1} - e^x)^{k-1} e^{xs} dx &= \sum_{l = 0}^{k-1} (-1)^l \int_{b_2}^{b_1} \binom{k - 1}{l} e^{x(s + l)} e^{b_1(k - 1 - l)} dx \\
&= e^{b_1(k - 1 + s)} \sum_{l = 0}^{k - 1} \frac{(-1)^l}{s + l} \binom{k - 1}{l} (1 - e^{(b_2 - b_1)(s + l)})
\end{align*}
has leading monomial $e^{b_1(k - 1 + s)}$ in $b$ with coefficient $\sum_{l = 0}^{k - 1} \frac{(-1)^l}{s + l} \binom{k - 1}{l} = \frac{\Gamma(k)}{\prod_{a = 0}^{k - 1} (s + a)}$. Therefore, the leading monomial in (\ref{eq:lead-mon}) is 
\[
\frac{e^{(s_N + \frac{k(N-1)}{2})|\lambda|} e^{(k - 1)(\rho, \lambda) + \frac{(N - 3)(k - 1)}{2}|\lambda| + (k - 1)\lambda_N} e^{(\lambda, s') + (k - 1)(|\lambda| - \lambda_N)}}{\prod_{1 \leq i < j \leq N} \prod_{a = 0}^{k - 1} (s_i - s_j + a)e^{((2k - 1)\rho, \lambda) + \frac{(2k - 1)(N-1)}{2}|\lambda|}} = \prod_{1 \leq i < j \leq N} \prod_{a = 0}^{k - 1} (s_i - s_j + a)^{-1} e^{(\lambda, s - k \rho)},
\]
as claimed.  By the results of \cite[Section 4.2]{HS}, for such $s$, any analytic symmetric eigenfunction of $\oL_{p_2}^\trig(k)$ with leading monomial $e^{(s - k \rho, \lambda)}$ diagonalizes $\oL_{p}^\trig(k)$ for any $p$.  Therefore, $\frac{\Phi_k(\lambda, s)}{\Delta^\trig(e^\lambda)^k}$ satisfies the full hypergeometric system, and our computation of its leading monomial coefficient implies that
\[
\FF_k(\lambda, s) = \frac{\Gamma(Nk) \cdots \Gamma(k)}{\Gamma(k)^N} \frac{\Phi_k(\lambda, s)}{\Delta(e^\lambda)^k}
\]
for non-integral $s$.  Both sides of the expression are holomorphic functions of $s$, so this continues to hold for non-generic $s$, yielding Theorem \ref{thm:bg-int}.

\subsection{Statement of the result}

Let $F_{k - 1}: \OO_\Lambda \to W_{k - 1}$ be the unique $U_N$-equivariant map so that $F_{k - 1}(\Lambda) = w_{k - 1}$.  Define the representation-valued integral
\[
\Psi_k(\lambda, s) = \int_{X \in \OO_\Lambda} F_{k-1}(X) \prod_{l = 1}^N \left(\frac{\det(X_l)}{\det(X_{l - 1})}\right)^{s_l} d\mu_\Lambda,
\]
where $X_l$ denotes the principal $l \times l$ submatrix of $X$.  As in the rational case, the integrand and Liouville measure in the definition of $\Psi_k(\lambda, s)$ are invariant under the action of the maximal torus of $U_N$, so $\Psi_k(\lambda, s)$ lies in $W_{k - 1}[0] = \CC \cdot w_{k - 1}$.  We will again interpret it as a complex-valued function via the identification of $\CC \cdot w_{k - 1}$ with $\CC$. Our result in the trigonometric setting uses these integrals to express the Heckman-Opdam hypergeometric functions.

\begin{theorem} \label{thm:trig-integral}
The Heckman-Opdam hypergeometric function $\FF_k(\lambda, s)$ admits the integral representation
\[
\FF_k(\lambda, s) = \frac{\Gamma(Nk) \cdots \Gamma(k)}{\Gamma(k)^{N} \prod_{i < j} (e^{\frac{\lambda_i - \lambda_j}{2}} - e^{-\frac{\lambda_i - \lambda_j}{2}})^k \prod_{a = 1}^{k - 1}\prod_{i < j}(s_i - s_j - a)} \int_{X \in \OO_\Lambda} F_{k - 1}(X) \prod_{l = 1}^N \left(\frac{\det(X_l)}{\det(X_{l - 1})}\right)^{s_l} d\mu_\Lambda,
\]
where $X_l$ is the principal $l \times l$ submatrix of $X$.
\end{theorem}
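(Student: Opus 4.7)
My plan is to parallel the proof of Theorem \ref{thm:rational} in the trigonometric setting. The goal is to show
\[
\Psi_k(\lambda, s) := \int_{\OO_\Lambda} F_{k-1}(X) \prod_{l=1}^N \left(\tfrac{\det X_l}{\det X_{l-1}}\right)^{s_l} d\mu_\Lambda = \prod_{a=1}^{k-1}\prod_{i<j}(s_i - s_j - a) \cdot \Phi_k(\lambda, s)
\]
by induction on $N$; combining this with Theorem \ref{thm:bg-int} then gives Theorem \ref{thm:trig-integral}. The base case $N=1$ is immediate, since both sides reduce to $e^{s_1 \lambda_1}$.

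For the inductive step, I first integrate over Liouville tori of the Gelfand--Tsetlin integrable system on $\OO_\Lambda$. Because $\log\det(X_l) = \sum_i \mu^l_i$ depends only on the logarithmic Gelfand--Tsetlin coordinates, the factor $\prod_l (\det X_l/\det X_{l-1})^{s_l}$ becomes $e^{\sum_l s_l(\sum_i \mu^l_i - \sum_i \mu^{l-1}_i)}$ and passes through the torus integrations. Together with Proposition \ref{prop:dh-compute}, this reduces $\Psi_k(\lambda, s)$ to an iterated integral over $\GT_\lambda$ whose integrand contains the product $\prod_{m=1}^{N-1} W_m^{trig}(\mu^m, \mu^{m+1})$ of torus integrals times the exponential, computed via Lemma \ref{lem:gt-conj}. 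Applying Lemma \ref{lemma:liouville-int} with the substitution $\mu_i \mapsto e^{\mu_i}$, $\lambda_j \mapsto e^{\lambda_j}$ (valid because exponentiation preserves interlacing) identifies each $W_m^{trig}$ as a $(k-1)$-st power of the rational Calogero--Moser Hamiltonian $\oL_{y_1 \cdots y_m}(1-k)$ in the variables $y_i = e^{\mu^m_i}$ applied to $\Delta(e^{\mu^m}, e^{\mu^{m+1}})^{k-1}$, up to explicit Vandermonde factors.

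I then peel off the outermost integration over $\mu = \mu^{N-1}$, apply the inductive hypothesis, and invoke the shift identity $e^{-s_N \sum_i \mu_i}\Phi_k(\mu, s_1,\ldots,s_{N-1}) = \Phi_k(\mu, s')$ with $s' = (s_1 - s_N,\ldots,s_{N-1} - s_N)$, which follows from the definition (\ref{eq:ho-scale}) by telescoping the exponential in the top row. Comparison with the analogous peeling of the Borodin--Gorin formula (\ref{eq:ho-scale}) reduces the inductive step to the identity
\[
\int_{\mu \prec \lambda} W_{N-1}^{trig}(\mu, \lambda) \Phi_k(\mu, s') \, d\mu = \Gamma(k)^{-(N-1)} \prod_{a=1}^{k-1}\prod_{i=1}^{N-1}(s_i - s_N - a) \int_{\mu \prec \lambda} K(\mu, \lambda) \Phi_k(\mu, s') \, d\mu,
\]
where $K(\mu, \lambda) = e^{-(k-1)\sum_i \mu_i} \Delta(e^\mu, e^\lambda)^{k-1}/(\Delta(e^\mu)^{k-1}\Delta(e^\lambda)^{k-1})$ is the outermost factor in the BG integrand.

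This identity, which is the main obstacle, is established by adjunction: using Propositions \ref{prop:cm-rat-adj} and \ref{prop:rat-formal-adjoint} to move the Calogero--Moser Hamiltonian in $W_{N-1}^{trig}$ onto $\Phi_k(\mu, s')$, the conjugation identity (\ref{eq:cm-shift}) to rewrite the resulting operator as acting on $\Phi_k(\mu, s')/\Delta^{trig}(\mu)^k$, and Theorem \ref{thm:bg-int} together with the hypergeometric system (\ref{eq:hyper}) to extract the eigenvalue from $\FF_k(\mu, s')$. The delicate point is that the Calogero--Moser Hamiltonian inside $W_{N-1}^{trig}$ is \emph{rational} in the \emph{multiplicative} variables $y = e^\mu$, while the eigenvalue property available for $\FF_k \propto \Phi_k/\Delta^{trig}(\mu)^k$ is that of the \emph{trigonometric} Calogero--Moser operators in $\mu$. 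Bridging these requires a Jacobian correction to Proposition \ref{prop:cm-rat-adj} accounting for $d\mu = dy/\prod_i y_i$, together with an identification of the conjugated operator as a trigonometric Calogero--Moser Hamiltonian whose eigenvalue on $\FF_k(\mu, s')$ is the symmetric polynomial $\prod_{a=1}^{k-1}\prod_{i=1}^{N-1}(s'_i - a)$, closing the induction.
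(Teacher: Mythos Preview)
Your outline matches the paper's proof closely: integrate over Liouville tori using Lemma~\ref{lem:gt-conj} and Proposition~\ref{prop:dh-compute}, peel off the outermost level, apply the induction hypothesis and the shift identity, and then move a Calogero--Moser operator across by adjunction so that it acts on $\Phi_k(\mu,s')/\Delta^\trig(\mu)^k$ and returns the eigenvalue $\prod_{a=1}^{k-1}\prod_{i=1}^{N-1}(s_i-s_N-a)$.

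The one place where your proposal is vaguer than the paper is the bridging step in the last paragraph. The paper does the rational-to-trigonometric conversion \emph{before} adjunction, not after: it proves a trigonometric analogue of Proposition~\ref{prop:matrix-element} (Proposition~\ref{prop:trig-matrix-element}) by establishing the operator identity
\[
D_{e^{\mu_{N-1}}}(-k)\cdots D_{e^{\mu_1}}(-k)
= e^{-\sum_i \mu_i}\prod_{i=1}^{N-1}\Bigl(T_{\mu_i}(-k)-k\tfrac{N-2}{2}\Bigr)
\]
on $\CC[e^{\mu_i}]^{S_{N-1}}$, which in turn follows from $T_{\mu_i}(-k)=e^{\mu_i}D_{e^{\mu_i}}(-k)-k\sum_{j<i}s_{ij}+k\tfrac{N-2}{2}$ together with the commutation $(T_{\mu_i}(-k)+ks_{ij})e^{-\mu_j}=e^{-\mu_j}T_{\mu_i}(-k)$ for $i>j$. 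This expresses $W_{N-1}$ directly as $(e^{-|\mu|}\oLt_{\mu_1\cdots\mu_{N-1}}(1-k))^{k-1}$ acting on an explicit kernel, after which the paper uses \emph{trigonometric} adjunction (Propositions~\ref{prop:cm-trig-adj} and~\ref{prop:trig-formal-adjoint}) and the trigonometric shift $\oLt_p(k)=\Delta^\trig(\mu)^{1-2k}\oLt_p(1-k)\Delta^\trig(\mu)^{2k-1}$. Your proposed route---rational adjunction in $y=e^\mu$ with the Jacobian $d\mu=dy/\prod_i y_i$, then identification with a trigonometric operator---amounts to the same identity carried out in the reverse order; to make it rigorous you would still need precisely the relation above between $D_{e^{\mu_i}}$ and $T_{\mu_i}$, so it is cleaner to invoke it at the outset as the paper does.
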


\subsection{Adjoints of trigonometric Calogero-Moser operators}

The trigonometric Dunkl operators in variables $\mu_i$ are defined by 
\[
T_{\mu_i}(k) := \partial_i + k \sum_{\alpha > 0} (\alpha, \mu_i) \frac{1}{1 - e^{-\alpha}} (1 - s_\alpha) - k (\rho, \mu_i).
\]
For a symmetric polynomial $p$, $m(p(T_{\mu_i}(k))) = \overline{L}^\trig_p(k)$ is the conjugate (\ref{eq:conj-tcm-ham-def}) of the trigonometric Calogero-Moser Hamiltonian corresponding to $p$.  Let $w_0 \in S_{N - 1}$ be the permutation sending $i$ to $N - i$.  We now compute the adjoint of the differential operator $\oLt_p(k)$ via an analogue of \cite[Lemma 7.8]{Opd3}.

\begin{prop} \label{prop:cm-trig-adj}
For a homogeneous symmetric polynomial $p$, the adjoint of $\oLt_p(k)$ as a differential operator is 
\[
\oLt_p(k)^\dagger = (-1)^{\deg p} \Delta^\trig(\mu)^{2k} \circ \oLt_{p}(k) \circ \Delta^\trig(\mu)^{-2k}.
\]
\end{prop}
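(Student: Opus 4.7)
The plan is to reduce the identity to a self-adjointness property of the un-conjugated Hamiltonian $L_p^\trig(k)$ and then verify it along the lines of Proposition \ref{prop:cm-rat-adj}. Using $\Delta(e^\lambda)^k = e^{\frac{(N-1)k}{2}\sum_i \lambda_i}\Delta^\trig(\lambda)^k$, the definition (\ref{eq:conj-tcm-ham-def}) simplifies to $\oLt_p(k) = \Delta^\trig(\mu)^{-k} \circ L_p^\trig(k) \circ \Delta^\trig(\mu)^k$. Taking formal adjoints with respect to Lebesgue measure and cancelling the resulting factors of $\Delta^\trig(\mu)^{\pm k}$, the desired claim is equivalent to the identity
\[
L_p^\trig(k)^\dagger = (-1)^{\deg p} L_p^\trig(k).
\]

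For the base case $p = p_2$, the operator $L_{p_2}^\trig(k) = \sum_i\partial_i^2 + k(1-k)\sum_{i<j}\frac{1}{2\sinh^2((\mu_i - \mu_j)/2)}$ is a Schr\"odinger operator with a real symmetric potential, so it is formally self-adjoint with respect to Lebesgue measure; this matches the claim since $(-1)^{\deg p_2} = 1$. For higher-degree $p$, I would mimic the Dunkl operator argument of Proposition \ref{prop:cm-rat-adj} by using the identity $\oLt_p(k) = m(p(T_{\mu_i}(k)))$ and the fact that on symmetric test functions $\oLt_p(k)$ and $p(T_{\mu_i}(k))$ coincide. Since the trigonometric Dunkl operators commute and formal adjunction is an anti-homomorphism, their adjoints $U_i := T_{\mu_i}(k)^\dagger$ also commute, so $p(T_{\mu_i}(k))^\dagger = p(U_i)$. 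A direct integration by parts computation for a single $T_{\mu_i}(k)$, handling the reflection via the change of variables $\mu \mapsto s_\alpha \mu$ (which preserves Lebesgue measure, realizes $s_\alpha^\dagger = s_\alpha$, and sends $e^{-\alpha}$ to $e^{\alpha}$), together with the computations $\Delta^\trig(\mu)^{-1}\partial_i\Delta^\trig(\mu) = \frac{1}{2}\sum_{\alpha > 0}(\alpha, e_i)\coth(\alpha/2)$ and $s_\alpha \Delta^\trig(\mu) = -\Delta^\trig(\mu)$, shows that $U_i$ agrees on symmetric test functions with the conjugation $-\Delta^\trig(\mu)^{2k} T_{\mu_i}(k)\Delta^\trig(\mu)^{-2k}$ up to an additive scalar shift by $-2k(\rho, e_i)$.

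The main obstacle is controlling this $W$-antisymmetric shift under application of a symmetric polynomial $p$. I would resolve it by a uniqueness argument: both $\oLt_p(k)^\dagger$ and $(-1)^{\deg p}\Delta^\trig(\mu)^{2k}\oLt_p(k)\Delta^\trig(\mu)^{-2k}$ are symmetric differential operators lying in the commutant of $\oLt_{p_2}(k)^\dagger = \Delta^\trig(\mu)^{2k}\oLt_{p_2}(k)\Delta^\trig(\mu)^{-2k}$ (the first because adjunction preserves commutation with the self-adjoint $L_{p_2}^\trig(k)$, the second because conjugation preserves commutation), and they share the same principal symbol $(-1)^{\deg p} p(\partial_1,\ldots,\partial_N)$. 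The characterization of the Calogero--Moser commutative algebra by principal symbols (a standard fact in Heckman--Opdam theory, verifiable by the trigonometric Dunkl construction) then forces the two to coincide, completing the proof.
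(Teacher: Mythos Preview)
Your argument is correct but takes a genuinely different route from the paper. The paper proves the statement by a direct Dunkl-level computation: it shows, by explicit integration by parts, that the formal adjoint of $T_{\mu_i}(k)$ equals $-\Delta^\trig(\mu)^{2k} \circ w_0 \circ T_{\mu_{N-i}}(k) \circ w_0 \circ \Delta^\trig(\mu)^{-2k}$, where $w_0 \in S_{N-1}$ is the longest element. The insertion of $w_0$ is precisely what absorbs the $-2k(\rho,e_i)$ shift you ran into (since $w_0\rho = -\rho$); once this identity is established, applying a \emph{symmetric} polynomial $p$ makes the $w_0$ conjugation disappear and the proposition follows immediately with no further input. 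You instead bypass the shift problem by verifying the base case $p=p_2$ directly and then appealing to uniqueness in the commutant of $\oLt_{p_2}(k)$ via principal symbols --- a valid argument, but one that imports a nontrivial structural fact from Heckman--Opdam theory. The paper's approach is fully self-contained and yields an explicit operator-level adjoint formula for each Dunkl operator; yours is more conceptual and would generalize more readily to situations where the direct calculation is unwieldy, at the cost of relying on the commutant characterization as a black box. Note also that your intermediate Dunkl computation is ultimately unused in your final argument; had you pursued it with the $w_0$ trick, it would have become the paper's proof.
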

\begin{proof}
As in the proof of Proposition \ref{prop:cm-rat-adj}, it suffices to check that for smooth compactly supported functions $f(\mu)$ and $g(\mu)$ we have 
\[
\int_{\RR^{N-1}} f(\mu) T_{\mu_i}(k) g(\mu) d\mu = - \int_{\RR^{N-1}} [\Delta^\trig(\mu)^{2k} \circ w_0 \circ T_{\mu_{N - i}}(k) \circ w_0 \circ \Delta^\trig(\mu)^{-2k} f(\mu)] g(\mu) d\mu.
\]
For this, we compare both sides by computing
\begin{align*}
\int_{\RR^{N-1}}& f(\mu) T_{\mu_i}(k) g(\mu) d\mu = \int_{\RR^{N-1}} f(\mu)\Big(\partial_i - k \sum_{j < i} \frac{1 - s_{ij}}{1 - e^{\mu_i - \mu_j}} + k \sum_{j > i} \frac{1 - s_{ij}}{1 - e^{\mu_j - \mu_i}} - k \frac{N - 2i}{2}\Big) g(\mu) d\mu\\
&= - \int_{\RR^{N-1}} \partial_i f(\mu) g(\mu) d\mu - k \sum_{j < i} \int_{\RR^{N-1}}\Big(\frac{1}{1 - e^{\mu_i - \mu_j}} - \frac{s_{ij}}{1 - e^{\mu_j - \mu_i}}\Big) f(\mu) g(\mu) d\mu \\
&\phantom{====} + k \sum_{j > i} \int_{\RR^{N-1}}\Big(\frac{1}{1 - e^{\mu_j - \mu_i}} - \frac{s_{ij}}{1 - e^{\mu_i - \mu_j}}\Big) f(\mu) g(\mu) d\mu - k \frac{N - 2i}{2} \int_{\RR^{N - 1}} f(\mu) g(\mu) d\mu\\
&= -\int_{\RR^{N-1}} \Big(\partial_i - k \sum_{j < i}\frac{s_{ij} + e^{\mu_j - \mu_i}}{1 - e^{\mu_j - \mu_i}} + k \sum_{j > i}\frac{s_{ij} + e^{\mu_i -\mu_j}}{1 - e^{\mu_i - \mu_j}} + k  \frac{N - 2i}{2}\Big) f(\mu) g(\mu) d\mu
\end{align*}
and 
\begin{align*}
\Delta^\trig(\mu)^{2k} \circ w_0 \circ &T_{\mu_{N - i}}(k) \circ w_0 \circ \Delta^\trig(\mu)^{-2k}\\
&= \Delta^\trig(\mu)^{2k} \circ \Big(\partial_i - k \sum_{j > i} \frac{1 - s_{ij}}{1 - e^{\mu_i - \mu_j}} + k \sum_{j < i} \frac{1 - s_{ij}}{1 - e^{\mu_j - \mu_i}} + k \frac{N - 2i}{2}\Big) \circ \Delta^\trig(\mu)^{-2k}\\
&= \partial_i - k \sum_{j > i} \frac{1 - s_{ij}}{1 - e^{\mu_i - \mu_j}} + k \sum_{j < i} \frac{1 - s_{ij}}{1 - e^{\mu_j - \mu_i}} + k \frac{N - 2i}{2} + k \sum_{j \neq i} \frac{1 + e^{\mu_i - \mu_j}}{1 - e^{\mu_i - \mu_j}}\\
&= \partial_i + k \sum_{j > i} \frac{e^{\mu_i - \mu_j} + s_{ij}}{1 - e^{\mu_i - \mu_j}} - k \sum_{j < i} \frac{e^{\mu_j - \mu_i} + s_{ij}}{1 - e^{\mu_j - \mu_i}} + k \frac{N - 2i}{2}. \qedhere
\end{align*}
\end{proof}

We again make precise the adjunction when integrated against a specific domain.

\begin{prop} \label{prop:trig-formal-adjoint}
Let $A$ be a rectangular domain.  Let $p = \sum_\alpha c_\alpha \mu^\alpha$ be a symmetric function.  If for each non-zero monomial $\mu^\alpha$ appearing in $p$, $\partial_\mu^\beta f$ vanishes on the boundary of $A$ for any $\beta \leq \alpha$, then we have the adjunction relation
\[
\int_A (\overline{L}^\trig_p(k)f(\mu))\, g(\mu) d\mu = \int_A f(\mu)\, \oLt_p(k)^\dagger(g(\mu)) d\mu.
\]
\end{prop}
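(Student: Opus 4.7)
The plan is to mirror the proof of Proposition \ref{prop:rat-formal-adjoint}, substituting the trigonometric formal adjoint identity of Proposition \ref{prop:cm-trig-adj} for its rational counterpart Proposition \ref{prop:cm-rat-adj}. By linearity in $p$, it suffices to verify the adjunction relation when $p(\mu) = \mu^\alpha$ is a single monomial, and then sum over the monomials appearing in the symmetric polynomial $p$.

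The first step is to recall that $\oLt_p(k) = m(p(T_{\mu_i}(k)))$ is the differential part of the symmetrization of a product of trigonometric Dunkl operators of the form $T_{\mu_i}(k) = \partial_i + (\text{multiplication operator})$. Since each $T_{\mu_i}(k)$ contributes exactly one factor of $\partial_{\mu_i}$ from its differential part, the operator $\oLt_{\mu^\alpha}(k)$ is a polynomial in $\partial_{\mu_i}$ whose multi-degree is bounded componentwise by $\alpha$, with coefficients that are smooth in the interior of any rectangular domain $A$ lying in $\{\mu_i \neq \mu_j\}$.

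Next, I would apply repeated integration by parts to transfer the derivatives from $f$ onto $g$. Each application turns a term of the form $c(\mu)\, \partial_\mu^\gamma f \cdot g$ into a bulk contribution $\pm \partial_\mu^\delta(c(\mu)\, g) \cdot \partial_\mu^{\gamma - \delta} f$ plus a boundary integral over $\partial A$ whose integrand contains a factor $\partial_\mu^\beta f$ for some $\beta \leq \gamma \leq \alpha$. After carrying out all integrations by parts, the accumulated bulk contribution assembles into the classical formal adjoint, which by Proposition \ref{prop:cm-trig-adj} is exactly $\oLt_p(k)^\dagger g$.

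By hypothesis, $\partial_\mu^\beta f$ vanishes on $\partial A$ for every $\beta \leq \alpha$ with $\mu^\alpha$ appearing in $p$, so every boundary term vanishes and the identity follows. The main obstacle is essentially bookkeeping: ensuring that at each integration by parts step the multi-index inequality $\beta \leq \alpha$ is preserved, so that the vanishing hypothesis is applicable. This control is precisely why the hypothesis is formulated for all $\beta \leq \alpha$ rather than only $\beta = \alpha$, and it follows from the linear appearance of $\partial_{\mu_i}$ in each Dunkl operator, exactly as in the rational case.
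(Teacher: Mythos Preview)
Your proposal is correct and follows essentially the same approach as the paper, which simply states that the proof is identical to that of Proposition~\ref{prop:rat-formal-adjoint}: repeated integration by parts together with the formal adjoint identity (Proposition~\ref{prop:cm-trig-adj}) leaves only boundary terms involving $\partial_\mu^\beta f$ with $\beta \leq \alpha$ for some $\mu^\alpha$ in $p$, and these vanish by hypothesis.
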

\begin{proof}
The proof is the same as for Proposition \ref{prop:rat-formal-adjoint}.
\end{proof}

\subsection{Matrix elements in the trigonometric case}

Take $l \geq N - 1$ and consider variables $\lambda_1, \ldots, \lambda_l$ and $\mu_1, \ldots, \mu_{N-1}$.  Recall that $Z_k(e^\mu, e^\lambda)$ denotes the coefficient of $(x_1 \cdots x_l)^k$ in the polynomial
\[
\frac{1}{(l - N + 1)!} \prod_{j = 1}^l \left(\sum_{i = 1}^{N-1} \frac{x_i}{e^{\mu_i} - e^{\lambda_j}} + x_N + \cdots + x_l\right)^k.
\]
We express $Z_k(e^\mu, e^\lambda)$ via trigonometric Calogero-Moser Hamiltonians in Proposition \ref{prop:trig-matrix-element}.

\begin{prop} \label{prop:trig-matrix-element}
We have the identity
\[
Z_k(e^\mu, e^\lambda) = k!^{-(N-1)}\Delta(e^\mu, e^\lambda)^{-k}\left(e^{-\sum_i \mu_i} \overline{L}^\trig_{\prod_{i = 1}^{N - 1} \left(\mu_i - k \frac{N - 2}{2}\right)}(-k)\right)^k \Delta(e^\mu, e^\lambda)^k.
\]
\end{prop}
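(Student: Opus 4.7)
The plan is to derive Proposition \ref{prop:trig-matrix-element} from its rational analogue, Proposition \ref{prop:matrix-element}, via the change of variables $y_i = e^{\mu_i}$ and $\nu_j = e^{\lambda_j}$ on the rational variables. Under this substitution, one has tautologically $Z_k(e^\mu, e^\lambda) = Z_k(y, \nu)\big|_{y_i = e^{\mu_i},\, \nu_j = e^{\lambda_j}}$, so applying Proposition \ref{prop:matrix-element} to the $(y, \nu)$ variables gives
\[
Z_k(e^\mu, e^\lambda) = k!^{-(N-1)}\,\Delta(e^\mu, e^\lambda)^{-k}\bigl[\oL_{y_{N-1}\cdots y_1}(-k)^k\,\Delta(y, e^\lambda)^k\bigr]_{y = e^\mu}.
\]
The proposition therefore reduces to the purely operator-theoretic intertwining identity
\[
\bigl[\oL_{y_{N-1}\cdots y_1}(-k)\,f(y)\bigr]_{y = e^\mu} = e^{-\sum_i \mu_i}\,\oLt_{\prod_i(\mu_i - k(N-2)/2)}(-k)\bigl[f(e^\mu)\bigr]
\]
for symmetric polynomials $f(y)$ in $y_1,\ldots, y_{N-1}$. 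Since $\Delta(y, e^\lambda)^k$ is such a symmetric polynomial and $\oL_{y_{N-1}\cdots y_1}(-k)$ preserves symmetric functions, iterating this single-step identity $k$ times on $f(y) = \Delta(y, e^\lambda)^k$ would complete the proof.

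To establish the intertwining identity, I would work directly with the rational Dunkl operators $D_{y_i}(-k)$, which satisfy $\oL_{y_{N-1}\cdots y_1}(-k) = m(D_{y_1}(-k)\cdots D_{y_{N-1}}(-k))$. The substitution $y_i = e^{\mu_i}$ converts each factor to
\[
D_{y_i}(-k)\big|_{y = e^\mu} = e^{-\mu_i}\partial_{\mu_i} - k\sum_{j\neq i}\frac{1}{e^{\mu_i} - e^{\mu_j}}(1 - s_{ij}),
\]
and multiplying by $e^{\mu_i}$, together with the elementary identity $\tfrac{1}{1 - e^{-x}} + \tfrac{1}{1 - e^x} = 1$, yields after rewriting in terms of the trigonometric Dunkl operator
\[
e^{\mu_i}D_{y_i}(-k)\big|_{y = e^\mu} = T_{\mu_i}(-k) - k(\rho, \mu_i) - k\sum_{a < i}(1 - s_{ai}),
\]
where $\rho$ is the half-sum of positive roots for $A_{N-2}$. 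Taking the product over $i = 1,\ldots,N-1$, pulling all factors of $e^{\mu_i}$ to the left using the commutators $[e^{\mu_i}, \partial_{\mu_j}] = -\delta_{ij}\,e^{\mu_i}$, and expanding $\prod_i\bigl(T_{\mu_i}(-k) - k(\rho,\mu_i) - k\sum_{a < i}(1 - s_{ai})\bigr)$ in elementary symmetric polynomials of $T_{\mu_i}(-k)$, one identifies the symmetric differential part with $e^{\sum_i\mu_i}\,\oLt_{\prod_i(\mu_i - k(N-2)/2)}(-k)$.

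The main obstacle is bookkeeping for the non-commutativity of $e^{\mu_i}$ with $\partial_{\mu_j}$ and for the reflection terms $\sum_{a<i}(1 - s_{ai})$: although these reflections vanish on symmetric functions, the intermediate products $D_{y_1}(-k)\cdots D_{y_j}(-k)$ for $j < N-1$ leave the symmetric subspace, so one cannot drop them factor-by-factor. The commutator corrections generated when the exponential factors are moved across derivatives must combine precisely with the $\rho$-shifts $-k(\rho,\mu_i)$ to produce the \emph{uniform} constant shift $k(N-2)/2$ in the trigonometric polynomial $\prod_i(\mu_i - k(N-2)/2)$; this matching, together with the cancellation of non-symmetric reflection contributions by the $S_{N-1}$-symmetry of the full product $\prod_i D_{y_i}(-k)$, is the technical heart of the argument and can be verified via a careful induction on $N$ using the commutation relations in the trigonometric Cherednik algebra.
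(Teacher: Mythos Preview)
Your strategy is the same as the paper's: reduce to the rational Proposition \ref{prop:matrix-element} via $y_i = e^{\mu_i}$, then rewrite the rational Dunkl operators $D_{e^{\mu_i}}(-k)$ in terms of the trigonometric ones $T_{\mu_i}(-k)$.  Your intermediate identity is correct and equivalent to the paper's
\[
D_{e^{\mu_i}}(-k) \;=\; e^{-\mu_i}\Bigl(T_{\mu_i}(-k) + k\textstyle\sum_{j<i}s_{ij} - k\,\tfrac{N-2}{2}\Bigr).
\]
Where your argument stalls is in the ``bookkeeping'' paragraph: you correctly flag that one cannot drop the reflection terms factor by factor and that moving the exponentials past the operators produces corrections, but you do not identify the mechanism that makes both problems disappear simultaneously.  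Proposing an induction on $N$ and an expansion in elementary symmetric polynomials of the $T_{\mu_i}(-k)$ is more machinery than needed and is vague at the crucial point.

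The missing observation is a single commutation identity: for $i>j$,
\[
\bigl(T_{\mu_i}(-k) + k\,s_{ij}\bigr)\,e^{-\mu_j} \;=\; e^{-\mu_j}\,T_{\mu_i}(-k).
\]
Write the product in the order $D_{e^{\mu_{N-1}}}(-k)\cdots D_{e^{\mu_1}}(-k)$ and slide each $e^{-\mu_j}$ leftward past the factors with larger index.  Each such move consumes exactly one of the unwanted $k\,s_{ij}$ terms, and no other corrections are produced.  After all exponentials reach the front one is left with
\[
e^{-\sum_i \mu_i}\,\Bigl(T_{\mu_{N-1}}(-k) - k\,\tfrac{N-2}{2}\Bigr)\cdots\Bigl(T_{\mu_1}(-k) - k\,\tfrac{N-2}{2}\Bigr),
\]
which on symmetric functions is precisely $e^{-\sum_i\mu_i}\,\oLt_{\prod_i(\mu_i - k(N-2)/2)}(-k)$.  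So the ``$\rho$-shifts combining with commutator corrections'' that you anticipate is just the elementary fact $k(i-1)+k\rho_i = k\tfrac{N-2}{2}$, already absorbed in the constant, while the reflection terms are eliminated \emph{exactly} by the exponential commutation, not by symmetry of the full product.  Once you have this identity the proof is a two-line computation; no induction is required.
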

\begin{proof}
We use the result in the rational case.  By Proposition \ref{prop:matrix-element} and (\ref{eq:conj-tcm-ham-def}), we have that 
\[
Z_k(e^\mu, e^\lambda) = k!^{-(N-1)}\Delta(e^\mu, e^\lambda)^{-k} \Big(D_{e^{\mu_{N-1}}}(-k) \cdots D_{e^{\mu_{1}}}(-k)\Big)^k \Delta(e^\mu, e^\lambda)^k.
\]
It therefore suffices to check that
\[
D_{e^{\mu_{N-1}}}(-k) \cdots D_{e^{\mu_{1}}}(-k) = e^{-\sum_i \mu_i} \Big(T_{\mu_{N-1}}(-k) - k \frac{N-2}{2}\Big) \cdots \Big(T_{\mu_{1}}(-k) - k \frac{N - 2}{2}\Big)
\]
on $\CC[e^{\mu_i}]^{S_{N-1}}$.  We may rewrite $T_{\mu_i}(-k)$ in the form 
\begin{equation} \label{eq:trig-rat}
T_{\mu_i}(-k) = \partial_{\mu_i} - k \sum_{j \neq i} \frac{e^{\mu_i}}{e^{\mu_i} - e^{\mu_j}} (1 - s_{ij}) - k \sum_{j < i} s_{ij} + k \frac{N - 2}{2} = e^{\mu_i} D_{e^{\mu_i}}(-k) - k \sum_{j < i} s_{ij} + k \frac{N-2}{2},
\end{equation}
where $D_{e^{\mu_i}}(-k)$ is the rational Dunkl operator in the exponential variables $e^{\mu_i}$, which implies that
\begin{equation} \label{eq:dunkl-trig-to-rat}
 D_{e^{\mu_i}}(-k) = e^{-\mu_i} \Big(T_{\mu_i}(-k) + k \sum_{j < i} s_{ij} - k \frac{N-2}{2}\Big).
\end{equation}
Further, we may check that for $i > j$, we have $(T_{\mu_i}(-k) + k s_{ij}) e^{-\mu_j} = e^{-\mu_j} T_{\mu_i}(-k)$, so substituting (\ref{eq:dunkl-trig-to-rat}) and shifting each $e^{-\mu_i}$ term to the beginning of the expression, we see that
\[
D_{e^{\mu_{N-1}}}(-k) \cdots D_{e^{\mu_1}}(-k) = e^{-\sum_i \mu_i} \prod_{i = 1}^{N-1} \Big(T_{\mu_i}(-k) - k \frac{N - 2}{2}\Big). \qedhere
\]
\end{proof}

\subsection{Proof of Theorem \ref{thm:trig-integral}}

We again compute $\Psi_k(\lambda, s)$ by integrating over the Liouville tori given by the Gelfand-Tsetlin coordinates.  We may write
\begin{equation} \label{eq:dressing-red}
\Psi_k(\lambda, s) = \int_{\mu \in \GT_\lambda} \int_{t \in T, X_0 \in \GT^{-1}(\mu)} F_{k-1}(t \cdot X_0) dt\, e^{\sum_{l = 1}^N s_l(\sum_i \mu^l_i - \sum_i \mu^{l-1}_i)} \GT_*(d\mu_\Lambda),
\end{equation}
where $dt$ is the invariant probability measure on the torus, and $\mu^l_i$ are the logarithmic Gelfand-Tsetlin coordinates. As in the rational case, by Lemma \ref{lem:gt-conj}, we have
\[
\int_{t \in T, X_0 \in \GT^{-1}(\mu)} F_{k-1}(t \cdot X_0) dt = \prod_{m = 1}^{N - 1} W_{m},
\]
where $W_{m}$ denotes the coefficient of $(x_1 \cdots x_{m})^{k-1}$ in $v_m \cdot (x_1 \cdots x_m)^{k-1}$. Noting that $\GT_*(d\mu_\Lambda) = 1_{\GT_\lambda} \cdot dx$ by Proposition \ref{prop:dh-compute} and inducting on $N$, we transform (\ref{eq:dressing-red}) to
\begin{align*}
\Psi_k(\lambda, s) &= \int_{\mu \in \GT_\lambda} \prod_{m = 1}^{N-1} W_m e^{\sum_{l = 1}^N s_l (\sum_i \mu^l_i - \sum_i \mu^{l - 1}_i)} \prod_{i} d \mu^l_i \\
&=\int_{\mu \prec \lambda} W_{N - 1}  e^{s_N(\sum_i \lambda_i - \sum_i \mu_i)}  \prod_{a = 1}^{k - 1} \prod_{1\leq i < j \leq N - 1} (s_i - s_j - a) \Phi_k(\mu, s) \prod_i d\mu_i\\
&= \prod_{1 \leq i < j \leq N - 1} (s_i - s_j - a) e^{s_N \sum_i \lambda_i} \int_{\mu \prec \lambda} W_{N - 1} \Phi_k(\mu, s') \prod_i d\mu_i,
\end{align*}
where $s' = (s_1 - s_N, \ldots, s_{N-1} - s_N)$ and the last equality follows from the $c = - s_N$ case of the shift identity
\begin{equation} \label{eq:trig-shift}
e^{c \sum_i \lambda_i} \Phi_k(\lambda, s) = \Phi_k(\lambda, s_1 + c, \ldots, s_N + c).
\end{equation}
Notice that $(v_m \, \diag(e^{\mu^{m+1}}) v_m^*)_{m} = \diag(e^{\mu^m})$, so writing $\mu := \mu^{N-1}$ and $\lambda := \mu^N$, we have by Lemma \ref{lemma:uconj} and Proposition \ref{prop:trig-matrix-element} that
\begin{align*}
W_{N - 1} &= (-1)^{(k-1)(N+2)(N-1)/2}\frac{\Delta(e^{\mu}, e^{\lambda})^{k-1}}{\Delta(e^{\mu})^{k-1} \Delta(e^{\lambda})^{k-1}} Z_{k-1}(e^{\mu}, e^{\lambda})\\
&= \frac{(-1)^{(k-1)(N+2)(N-1)/2} \Gamma(k)^{-(N-1)}}{\Delta(e^{\mu})^{k-1} \Delta(e^{\lambda})^{k-1}} \left(e^{-\sum_i \mu_i} \overline{L}^\trig_{\prod_{i = 1}^{N - 1} \left(\mu_i - (k - 1) \frac{N - 2}{2}\right)}(1-k)\right)^{k - 1} \Delta(e^\mu, e^\lambda)^{k - 1}\\
&= \frac{(-1)^{(k-1)(N+2)(N-1)/2} \Gamma(k)^{-(N-1)}}{\Delta^\trig(\mu)^{k-1} \Delta(e^{\lambda})^{k-1}} \left(e^{-\sum_i \mu_i} \overline{L}^\trig_{\mu_1 \cdots \mu_{N - 1}}(1-k)\right)^{k - 1}  e^{- \frac{(k - 1)(N - 1)}{2} |\mu|} \Delta(e^\mu, e^\lambda)^{k - 1},
\end{align*}
where in the last line we use that $e^{- c |\mu|} T_{\mu_i}(k) e^{c|\mu|} = T_{\mu_i}(k) + c$.  We conclude that 
\begin{multline*}
\Psi_k(\lambda, s) = (-1)^{(k-1)(N+2)(N-1)/2} \Gamma(k)^{-(N-1)} \prod_{1 \leq i < j \leq N - 1} (s_i - s_j - a) e^{s_N \sum_i \lambda_i} \Delta(e^{\lambda})^{1 - k}\\
\int_{\mu \prec \lambda} \frac{\Phi_k(\mu, s')}{\Delta^\trig(\mu)^{k-1} } \left(e^{-\sum_i \mu_i} \overline{L}^\trig_{\mu_1 \cdots \mu_{N - 1}}(1-k)\right)^{k - 1}  e^{- \frac{(k - 1)(N - 1)}{2} |\mu|} \Delta(e^\mu, e^\lambda)^{k - 1} \prod_i d\mu_i.
\end{multline*}
Recall that $L_p^\trig(k) = L_p^\trig(1 - k)$ for any symmetric polynomial $p$, which by (\ref{eq:conj-tcm-ham-def}) implies that 
\begin{equation} \label{eq:cm-shift-trig}
\oL_p^\trig(k) = \Delta^\trig(\mu)^{1 - 2k} \circ \oL_p^\trig(1 - k) \circ \Delta^\trig(\mu)^{2k - 1}.
\end{equation}
In addition, by Proposition \ref{prop:cm-trig-adj}, we have that 
\begin{equation} \label{eq:cm-adj-trig}
\oL^\trig_{\mu_1 \cdots \mu_{N - 1}}(1 - k)^\dagger = (-1)^{N - 1} \Delta^\trig(\mu)^{2 - 2k} \oL^\trig_{\mu_1 \cdots \mu_{N - 1}}(k - 1) \circ \Delta^\trig(\mu)^{2k - 2}.
\end{equation}
Applying Proposition \ref{prop:trig-formal-adjoint}, we have that 
\begin{align*}
\int_{\mu \prec \lambda}& \frac{\Phi_k(\mu, s')}{\Delta^\trig(\mu)^{k-1}} \Big(e^{-\sum_i \mu_i} \overline{L}^\trig_{\mu_1 \cdots \mu_{N - 1}}(1-k)\Big)^{k - 1} e^{-\frac{(k - 1)(N - 1)}{2} |\mu|} \Delta(e^\mu, e^\lambda)^{k - 1} \prod_i d\mu_i\\
&= (-1)^{(N - 1)(k-1)}\int_{\mu \prec \lambda} e^{-\frac{(k - 1)(N - 1)}{2} |\mu|} \Delta(e^\mu, e^\lambda)^{k - 1} \Delta^\trig(\mu)^{2 - 2k}\\
&\phantom{==================} \Big(\overline{L}^\trig_{\mu_1 \cdots \mu_{N - 1}}(1-k) e^{-\sum_i \mu_i}\Big)^{k - 1} \Delta^\trig(\mu)^{k - 1}\Phi_k(\mu, s')\prod_i d\mu_i\\
&= (-1)^{(N-1)(k-1)}\int_{\mu \prec \lambda} e^{-\frac{(k - 1)(N - 1)}{2} |\mu|} \Delta(e^\mu, e^\lambda)^{k - 1} \Delta^\trig(\mu) \Big(\overline{L}^\trig_{\mu_1 \cdots \mu_{N - 1}}(k) e^{-\sum_i \mu_i}\Big)^{k - 1} \frac{\Phi_k(\mu, s')}{\Delta^\trig(\mu)^k}\prod_i d\mu_i\\
&= (-1)^{(N-1)(k-1)} \prod_{a = 1}^{k-1} \prod_{i = 1}^{N - 1}(s_i - s_N - a) \int_{\mu \prec \lambda} \frac{\Delta(e^\mu, e^\lambda)^{k - 1}}{\Delta(e^\mu)^{k-1}} \Phi_k(\mu, s') e^{-(k - 1)|\mu|} \prod_i d\mu_i,
\end{align*}
where we apply adjunction and (\ref{eq:cm-adj-trig}) in the first equality, (\ref{eq:cm-shift-trig}) in the second equality, and the inductive hypothesis and (\ref{eq:trig-shift}) in the last equality.  Substituting into our previous expression, we obtain
\begin{align*}
\Psi_k(\lambda, s) &= (-1)^{(k-1)N(N-1)/2} \Gamma(k)^{-(N-1)} \prod_{a = 1}^{k-1}\prod_{1 \leq i < j \leq N} (s_i - s_j - a) \\
&\phantom{=================}\int_{\mu \prec \lambda} \frac{\Delta(e^\mu, e^\lambda)^{k - 1}}{\Delta(e^\mu)^{k-1} \Delta(e^\lambda)^{k-1}}e^{s_N(|\lambda| - |\mu|)} \Phi_k(\mu, s)e^{-(k-1)|\mu|}\prod_i d\mu_i\\
&= \prod_{a = 1}^{k-1} \prod_{1 \leq i < j \leq N} (s_i - s_j - a) \Phi_k(\lambda, s),
\end{align*}
which implies the theorem by normalizing via Theorem \ref{thm:bg-int}.

\section{Proofs of some technical lemmas} \label{sec:tech}

\subsection{Proof of Lemma \ref{lem:mac-op-lim}} \label{subsec:mac-lim}

For a subset $I$ of indices, denote by $1_I$ and $2_I$ the vectors with $1$ and $2$ in the indices of $I$ and $0$ elsewhere.  We first expand the Macdonald difference operators in $\log(q_m)$, yielding
\begin{align*}
&D^r_{N, q_m^{2\lambda_m + 2k\rho}}(q_m^2, q_m^{2k})f_m(\lambda_m; q_m)\\
&\phantom{===}= q_m^{2r(r - n)k} \sum_{|I| = r} \prod_{i \in I, j \notin I} \frac{q_m^{k} q_m^{2(\lambda_{m, i} - \lambda_{m, j} + k(j - i))} - q_m^{-k}}{q_m^{2(\lambda_{m, i} - \lambda_{m, j} + k(j - i))} - 1} f_m(\lambda_m + 1_I; q_m)\\
&\phantom{===}= \sum_{|I| = r} \prod_{i \in I, j \notin I} \left(1 + (1 - q_m^{k}) \frac{q_m^{2(\lambda_{m, i} - \lambda_{m, j} + k(j - i))} + q_m^{-k}}{1 - q_m^{2(\lambda_{m, i} - \lambda_{m, j} + k(j - i))}}\right) f_m(\lambda_m + 1_I; q_m)\\
&\phantom{===}= \sum_{|I| = r} \left(1 + \sum_{i \in I, j \notin I} (1 - q_m^k) \frac{q_m^{2(\lambda_{m, i} - \lambda_{m, j} + k(j - i))} + q_m^{-k}}{1 - q_m^{2(\lambda_{m, i} - \lambda_{m, j} + k(j - i))}} + C_r(\lambda_m, q_m) \log(q_m)^2\right) f_m(\lambda_m + 1_I; q_m) + O(\log(q_m)^3)
\end{align*}
for some functions $C_r(\lambda_m, q_m) = o(\log(q_m)^{-1})$.  Specializing this, we see that 
\begin{multline*}
D^1_{N, q_m^{2\lambda_m + 2k\rho}}(q_m^{2}, q_m^{2k})f_m(\lambda_m; q_m)\\
 = \sum_{i = 1}^N \left(1 + \sum_{j \neq i} (1 - q_m^k) \frac{q_m^{2(\lambda_{m, i} - \lambda_{m, j} + k(j - i))} + q_m^{-k}}{1 - q_m^{2(\lambda_{m, i} - \lambda_{m, j} + k(j - i))}} + C_1(\lambda_m, q_m)\log(q_m)^{2} \right) f_m(\lambda_m + 1_i; q_m) + O(\log(q_m)^{3})
\end{multline*}
and
\begin{align*}
&D^1_{N, q_m^{2\lambda_m + 2k\rho}}(q_m^2, q_m^{2k})^2f_m(\lambda_m; q_m) \\
&= \sum_{i = 1}^N \left(1 + S_1(\lambda_m, q_m)\log(q_m)^{2}\right) f_m(\lambda_m + 2_i; q_m) + \sum_{i_1 \neq i_2} \left(1 + S_2(\lambda_m, q_m) \log(q_m)^{2}\right) f_m(\lambda_m + 1_{i_1, i_2}; q_m) + O(\log(q_m)^{2}) \\
&\phantom{=} + (1 - q_m^{k}) \sum_{i = 1}^N \sum_{j \neq i} \Big(\frac{q_m^{2(\lambda_{m, i} - \lambda_{m, j} + k(j - i))} + q_m^{-k}}{1 - q_m^{2(\lambda_{m, i} - \lambda_{m, j} + k(j - i))}} + \frac{q_m^{2(\lambda_{m, i} + 1 - \lambda_{m, j} + k(j - i))} + q_m^{-k}}{1 - q_m^{2(\lambda_{m, i} + 1 - \lambda_{m, j} + k(j - i))}}\Big) f_m(\lambda_m + 2_i; q_m) \\
&\phantom{=} + (1 - q_m^{k})\sum_{i_1 \neq i_2}\sum_{j \neq i_1, i_2} \Big(\frac{q_m^{2(\lambda_{m, i_2} - \lambda_{m, j} + k(j - i_2))} + q_m^{-k}}{1 - q_m^{2(\lambda_{m, i_2} - \lambda_{m, j} + k(j - i_2))}} +\frac{q_m^{2(\lambda_{m, i_1} - \lambda_{m, j} + k(j - i_1))} + q_m^{-k}}{1 - q_m^{2(\lambda_{m, i_1} - \lambda_{m, j_1} + k(j_1 - i_1))}}\Big)f_m(\lambda_m + 1_{i_1, i_2}; q_m) \\
&\phantom{=} + (1 - q_m^{k})\sum_{i_1 \neq i_2}\Big( \frac{q_m^{2(\lambda_{m, i_2} - \lambda_{m, i_1} + k(i_1 - i_2))} + q_m^{-k}}{1 - q_m^{2(\lambda_{m, i_2} - \lambda_{m, i_1} + k(i_1 - i_2))}} + \frac{q_m^{2(\lambda_{m, i_1} - \lambda_{m, i_2} - 1 + k(i_2 - i_1))} + q_m^{-k}}{1 - q_m^{2(\lambda_{m, i_1} - \lambda_{m, i_2} - 1 + k(i_2 - i_1))}}\Big) f_m(\lambda_m + 1_{i_1, i_2}; q_m)
\end{align*}
for some functions $S_1(\lambda_m, q_m)$ and $S_2(\lambda_m, q_m)$, both of which are $o(\log(q_m)^{-1})$.  We define
\begin{align*}
A_{i_1, i_2}(\lambda_m, q_m) &= \frac{1}{1 - q_m^{2}}\Big(\frac{q_m^{2(\lambda_{m, i_2} - \lambda_{m, i_1} + k(i_1 - i_2))} + q_m^{-k}}{1 - q_m^{2(\lambda_{m, i_2} - \lambda_{m, i_1} + k(i_1 - i_2))}} + \frac{q_m^{2(\lambda_{m, i_1} - \lambda_{m, i_2} - 1 + k(i_2 - i_1))} + q_m^{-k}}{1 - q_m^{2(\lambda_{m, i_1} - \lambda_{m, i_2} - 1 + k(i_2 - i_1))}}\Big) \\
B_{i, j}(\lambda_m, q_m) &= \frac{q_m^{2(\lambda_{m, i} - \lambda_{m, j} + k(j - i))} + q_m^{-k}}{1 - q_m^{2(\lambda_{m, i} - \lambda_{m, j} + k(j - i))}} + \frac{q_m^{2(\lambda_{m, i} + 1 - \lambda_{m, j} + k(j - i))} + q_m^{-k}}{1 - q_m^{2(\lambda_{m, i} + 1 - \lambda_{m, j} + k(j - i))}}
\end{align*}
so that 
\begin{multline*}
\sum_{i_1 \neq i_2}\Big( \frac{q_m^{2(\lambda_{m, i_2} - \lambda_{m, i_1} + k(i_1 - i_2))} + q_m^{-k}}{1 - q_m^{2(\lambda_{m, i_2} - \lambda_{m, i_1} + k(i_1 - i_2))}} + \frac{q_m^{2(\lambda_{m, i_1} - \lambda_{m, i_2} - 1 + k(i_2 - i_1))} + q_m^{-k}}{1 - q_m^{2(\lambda_{m, i_1} - \lambda_{m, i_2} - 1 + k(i_2 - i_1))}}\Big) f_m(\lambda_m + 1_{i_1, i_2}; q_m)\\
 = (1 - q_m^2) \sum_{i_1 \neq i_2} A_{i_1, i_2}(\lambda_m, q_m) f_m(\lambda_m + 1_{i_1, i_2}; q_m) + O(\log(q_m)^{2})
\end{multline*}
and
\begin{multline*}
\sum_{j \neq i} \Big(\frac{q_m^{2(\lambda_{m, i} - \lambda_{m, j} + k(j - i))} + q_m^{-k}}{1 - q_m^{2(\lambda_{m, i} - \lambda_{m, j} + k(j - i))}} + \frac{q_m^{2(\lambda_{m, i} + 1 - \lambda_{m, j} + k(j - i))} + q_m^{-k}}{1 - q_m^{2(\lambda_{m, i} + 1 - \lambda_{m, j} + k(j - i))}}\Big) f_m(\lambda_m + 2_i; q_m)\\
 = \sum_{j \neq i} B_{i, j}(\lambda_m, q_m) f_m(\lambda_m + 2_i; q_m),
\end{multline*}
Notice that
\[
\lim_{m \to \infty} A_{i_1, i_2}(\lambda_m, q_m) = \frac{k e^{2\lambda_{ i_1} - 2 \lambda_{ i_2}} - 2(k - 2) e^{\lambda_{ i_1} - \lambda_{ i_2}} + k}{(1 - e^{\lambda_{ i_1} - \lambda_{ i_2}})^2} \text{ and }
\lim_{m \to \infty} B_{i, j}(\lambda_m, q_m) = - \frac{2(1 + e^{\lambda_{ i} - \lambda_{ j}})}{1 - e^{\lambda_{ i} - \lambda_{ j}}}.
\]
We have also that 
\begin{align*}
D^2_{N, q_m^{2\lambda_m + 2k\rho}}&(q_m^{2}, q_m^{2k})f_m(\lambda_m; q_m)\\
&\phantom{===}= \sum_{i_1 \neq i_2} \left(1 + C_2(\lambda_m, q_m) \log(q_m)^{2}\right) f_m(\lambda_m + 1_{i_1, i_2}; q_m) + O(\log(q_m)^{2}) \\
&\phantom{======} + (1 - q_m^{k})\sum_{i_1 \neq i_2} \sum_{j \neq i_1, i_2} \Big(\frac{q_m^{2(\lambda_{m, i_1} - \lambda_{m, j} + k(j - i_1))} + q_m^{-k}}{1 - q_m^{2(\lambda_{m, i_1} - \lambda_{m, j} + k(j - i_1))}}\Big) f_m(\lambda_m + 1_{i_1,i_2}; q_m)\\
&\phantom{======} + (1 - q_m^{k})\sum_{i_1 \neq i_2} \sum_{j \neq i_1, i_2} \Big(\frac{q_m^{2(\lambda_{m, i_2} - \lambda_{m, j} + k(j - i_2))} + q_m^{-k}}{1 - q_m^{2(\lambda_{m, i_2} - \lambda_{m, j} + k(j - i_2))}}\Big) f_m(\lambda_m + 1_{i_1,i_2}; q_m).
\end{align*}
Together, these imply that 
\begin{align*}
D_{\lambda_m}(q_m) f_m(\lambda_m; q_m) &= \sum_{i = 1}^N \left(1 + (1 - q_m^{k}) \sum_{j \neq i} B_{i, j}(\lambda_m, q_m) + S_1(\lambda_m, q_m)\log(q_m)^{2}\right) f_m(\lambda_m + 2_i; q_m)\\
&\phantom{==}-2\sum_{i = 1}^N \left(1 + \sum_{j \neq i} (1 - q_m^{k}) \frac{q_m^{2(\lambda_{m, i} - \lambda_{m, j} + k(j - i))} + q_m^{-k}}{1 - q_m^{2(\lambda_{m, i} - \lambda_{m, j} + k(j - i))}}\right) f_m(\lambda_m + 1_i; q_m)\\
&\phantom{==} + (1 - q_m^{k})(1 - q_m^{2}) \sum_{i_1\neq i_2} A_{i_1, i_2}(\lambda_m, q_m)f_m(\lambda_m + 1_{i_1, i_2}; q_m) \\
&\phantom{==}+ (C_2(\lambda_m, q_m) - S_2(\lambda_m, q_m))\log(q_m)^{2} f_m(\lambda_m + 1_{i_1, i_2}; q_m) + N f_m(\lambda_m; q_m) + O(\log(q_m)^{2}).
\end{align*}
Taking limits in the previous expression yields that 
\begin{align*}
\lim_{m \to \infty} (-2\log(q_m))^{-2} D_{\lambda_m}(q_m) f_m(\lambda_m; q_m) &= \Delta f(\lambda) - k \sum_{i \neq j} \frac{1 + e^{\lambda_i - \lambda_j}}{1 - e^{\lambda_i - \lambda_j}} \partial_if(\lambda) + R(\lambda) f(\lambda)\\
&= \Big(\Delta - k \sum_{i < j} \frac{1 + e^{\lambda_i - \lambda_j}}{1 - e^{\lambda_i - \lambda_j}} (\partial_i - \partial_j)+ R(\lambda)\Big) f(\lambda)
\end{align*}
for some function $R(\lambda)$.  Note that $f_m(\lambda_m) \equiv 1$ is the Macdonald polynomial in $q^{2\lambda_m}$ corresponding to the empty partition, hence we conclude that
\begin{align*}
D_\lambda(q) \cdot 1 &= p_2(q^{2k\rho}) - 2 p_1(q^{2k\rho}) + N = \sum_i (q^{2k\rho_i} - 1)^2,
\end{align*}
which implies that 
\[
\lim_{m \to \infty} (-2 \log(q_m))^{-2} D_{\lambda_m}(q_m) \cdot 1 =  k^2(\rho, \rho),
\]
hence $R(\lambda) \equiv k^2 (\rho, \rho)$. We conclude that
\[
\lim_{m \to \infty} (-2 \log(q_m))^{-2} D_{\lambda_m}(q_m) f_m(\lambda_m; q_m) = \Big(\Delta - k \sum_{i < j} \frac{1 + e^{\lambda_i - \lambda_j}}{1 - e^{\lambda_i - \lambda_j}} (\partial_i - \partial_j)+ k^2(\rho, \rho)\Big) f(\lambda),
\]
where we note that by \cite[Theorem 2.1.1]{HS}, we have 
\[
\overline{L}_{p_2}^\text{trig}(k) = \Delta - k \sum_{i < j} \frac{1 + e^{\lambda_i - \lambda_j}}{1 - e^{\lambda_i - \lambda_j}} (\partial_i - \partial_j)+ k^2(\rho, \rho). \qedhere
\]

\subsection{Proof of Lemma \ref{lemma:uconj}} \label{subsec:unit}

We verify the statement by direct computation.  Write $u = u(\mu, \lambda)$ and $\lambda = \diag(\lambda_1, \ldots, \lambda_N)$.  Define the non-negative real numbers $x_1, \ldots, x_{N-1}$ by 
\[
x_i^2 = - \frac{\prod_{j} (\lambda_j - \mu_i)}{\prod_{j \neq i} (\mu_j - \mu_i)},
\]
where we note that the right side of the definition is non-negative because $\lambda$ and $\mu$ interlace.  Define $y = \sum_i \lambda_i - \sum_i \mu_i$.  For $i < N$, our definition of $u$ implies that
\begin{equation} \label{eq:unit-rel}
u_{ij} = \frac{x_i}{\lambda_j - \mu_i} u_{Nj}.
\end{equation}
We first claim that $u\lambda = \mu' u$ for the matrix 
\[
\mu' = \left(\begin{array}{ccccc|c}
 \mu_1& & & & & x_1  \\ 
 & \mu_2  & & & & x_2  \\
& &\ddots & & & \vdots \\
& & & \mu_{N-2} & & x_{N-2}  \\
& & & &\mu_{N-1}  &x_{N-1} \\ \hline
x_1 & x_2 & \cdots & x_{N-2} &x_{N-1} & y \end{array}\right).
\]
For $i < N$, this holds for each element of row $i$ by the equality
\[
\lambda_j u_{ij} = \mu_j u_{ij} + x_i u_{Nj}
\]
implied by (\ref{eq:unit-rel}).  For row $N$, we must check that 
\[
\lambda_j u_{Nj} = \sum_{i = 1}^{N-1} x_i u_{ij} + y u_{Nj} = \left(y + \sum_{i = 1}^{N-1} \frac{x_i^2}{\lambda_j - \mu_i}\right) u_{Nj},
\]
for which it suffices to check that 
\begin{equation} \label{eq:bottom-row-check}
\sum_{i = 1}^{N-1} \frac{\prod_{l \neq j} (\lambda_l - \mu_i)}{\prod_{l \neq i} (\mu_l - \mu_i)}  = \sum_{i \neq j} \lambda_i - \sum_i \mu_i.
\end{equation}
The left side of (\ref{eq:bottom-row-check}) is a symmetric rational function in the $\mu_i$ which may be expressed as a quotient
\[
\frac{P(\mu)}{\prod_{i < j} (\mu_i - \mu_j)},
\]
whose numerator $P(\mu)$ has degree at most $\frac{N(N-1)}{2} + 1$ in the $\mu$-variables.  Therefore, $P(\mu)$ is antisymmetric, meaning the quotient is symmetric of degree at most $1$.  In particular, it takes the form $C_1 + C_2\sum_i \mu_i$ for $C_1$ and $C_2$ constant in $\mu$.  Noting that the coefficient of $\mu_1^{N - 1}\mu_2^{N - 3} \mu_3^{N - 4} \cdots \mu_{N - 2}$ in $P(\mu)$ is $-1$ shows that $C_2 = -1$.  Finally, $C_1$ is a polynomial of degree $1$ in $\lambda$, so it is given by
\[
C_1 = \sum_i \frac{\mu_i^{N - 2} (-1)^{N-2} \sum_{l \neq j} \lambda_l}{\prod_{l \neq i} (\mu_l - \mu_i)} = \sum_i \frac{\mu_i^{N - 2}}{\prod_{l \neq i} (\mu_i - \mu_l)} \cdot \left( \sum_{l \neq j} \lambda_l\right) = \sum_{i \neq j} \lambda_i,
\]
where the last equality follows by noting that $\sum_i \frac{\mu_i^{N - 2}}{\prod_{l \neq i} (\mu_i - \mu_l)}$ is symmetric of degree $0$ in $\mu$ and a rational function whose denominator is $\prod_{i < j} (\mu_i - \mu_j)$ and whose numerator contains $\mu_1^{N- 2} \mu_2^{N - 3} \cdots \mu_{N-2}$ with coefficient $1$.  This establishes (\ref{eq:bottom-row-check}).

It remains to check that $u$ is unitary.  For this, we check that the columns of $u$ are orthonormal.  Choose any $1 \leq a < b \leq N$.  We have that 
\[
\sum_i u_{ia}u_{ib} = \left(\sum_i \frac{x_i^2}{(\lambda_a - \mu_i)(\lambda_b - \mu_i)} + 1\right) u_{Na}u_{Nb} = \left(1 - \sum_i \frac{\prod_{j \neq a, b} (\lambda_j - \mu_i)}{\prod_{j \neq i} (\mu_j - \mu_i)}\right) u_{Na}u_{Nb}.
\]
Observe that $\sum_i \frac{\prod_{j \neq a, b} (\lambda_j - \mu_i)}{\prod_{j \neq i} (\mu_j - \mu_i)}$ is symmetric in the $\mu_i$ and may be expressed as a rational function with denominator $\prod_{i < j} (\mu_i - \mu_j)$ and numerator of degree at most $\frac{N(N-1)}{2}$ in $\mu$.  Further, the coefficient of $\mu_1^{N - 2} \mu_2^{N-3} \cdots \mu_{N-2}$ in the numerator is $1$, so we conclude that 
\begin{equation} \label{eq:unit-off-diag}
1 - \sum_i \frac{\prod_{j \neq a, b} (\lambda_j - \mu_i)}{\prod_{j \neq i} (\mu_j - \mu_i)} = 0,
\end{equation}
hence $\sum_i u_{ia}u_{ib} = 0$.  It remains only to show that 
\[
1 = \sum_i u_{ia}^2 = \left(1 + \sum_i \frac{x_i^2}{(\lambda_a - \mu_i)^2}\right) u_{Na}^2,
\]
for which we must check that 
\[
\frac{\prod_{l \neq a} (\lambda_l - \lambda_a)}{\prod_l (\mu_l - \lambda_a)} = 1 - \sum_i \frac{\prod_{j \neq a} (\lambda_j - \mu_i)}{(\lambda_a - \mu_i) \prod_{j \neq i}(\mu_j - \mu_i)},
\]
which is equivalent to 
\begin{equation} \label{eq:diag-unit}
\prod_{l \neq a} (\lambda_l - \lambda_a) = \prod_l (\mu_l - \lambda_a) \left(1 - \sum_i \frac{\prod_{j \neq a} (\lambda_j - \mu_i)}{(\lambda_a - \mu_i) \prod_{j \neq i}(\mu_j - \mu_i)}\right).
\end{equation}
View both sides of (\ref{eq:diag-unit}) as polynomials in $\lambda_a$.  If $\lambda_a = \lambda_b$ for $b \neq a$, the right side becomes
\[
1 - \sum_i \frac{\prod_{j \neq a, b} (\lambda_j - \mu_i)}{\prod_{j \neq i}(\mu_j - \mu_i)} = 0
\]
by (\ref{eq:unit-off-diag}).  Therefore, both sides of (\ref{eq:diag-unit}) are polynomials in $\lambda_a$ of the same degree with the same roots and the same leading coefficient $(-1)^{N-1}$, so they are equal, completing the proof.

\begin{remark}
The expressions above for $x_i^2$ and $y$ appeared previously in \cite{Ner}.  Similar computations appeared also in \cite{GK, FoRa}.
\end{remark}

\subsection{Proof of Proposition \ref{prop:matrix-element}} \label{subsec:matrix-element-proof}

Before beginning the proof, we outline our approach.  We first obtain an alternate expression for $Z_1(\mu, \lambda)$ in Lemma \ref{lemma:k1-matrix-element}.  We then observe that $Z_k(\mu, \lambda)$ is a constant multiple of $Z_1(\mu', \lambda')$ for sets of variables $\mu'$ and $\lambda'$ which contain $k$ duplicate copies of each value of $\mu$ and $\lambda$.  Relating Calogero-Moser Hamiltonians at different values of $k$ in Lemma \ref{lemma:cm-eq} leads to the result.  Recall here that $D_{\mu_i}(\kappa)$ denotes the rational Dunkl operator of (\ref{eq:rat-dunkl-def}).

\begin{lemma} \label{lemma:k1-matrix-element}
For any $\kappa \in \CC$, we have 
\[
\Delta(\mu, \lambda)^{-\kappa} D_{\mu_{N-1}}(- \kappa) \cdots D_{\mu_{1}}(- \kappa) \Delta(\mu, \lambda)^\kappa = \kappa^{N-1} Z_1(\mu, \lambda).
\]
\end{lemma}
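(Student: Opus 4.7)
The plan is to conjugate the Dunkl operators by $\Delta(\mu,\lambda)^\kappa$ and then evaluate on the constant function $1$. Since $\Delta(\mu,\lambda) = \prod_{i,j}(\mu_i - \lambda_j)$ is symmetric in the $\mu$-variables, multiplication by $\Delta(\mu,\lambda)^\kappa$ commutes with each transposition $s_{ij}$, and a short calculation using $\partial_{\mu_i} \log \Delta(\mu,\lambda) = \sum_j (\mu_i - \lambda_j)^{-1}$ yields
\[
\Delta(\mu,\lambda)^{-\kappa} D_{\mu_i}(-\kappa) \Delta(\mu,\lambda)^\kappa = D_{\mu_i}(-\kappa) + \kappa V_i, \qquad V_i := \sum_{j=1}^l \frac{1}{\mu_i - \lambda_j}.
\]
Setting $U_i := D_{\mu_i}(-\kappa) + \kappa V_i$, the left-hand side of the lemma is precisely $(U_{N-1} U_{N-2} \cdots U_1)(1)$, so it suffices to evaluate this iterated application as a scalar function.

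To identify the answer with $Z_1(\mu,\lambda)$, I would first rewrite $Z_1$ combinatorially. Extracting the coefficient of $x_1\cdots x_l$ in the defining product forces a subset $S \subset \{1,\ldots,l\}$ of size $l - N + 1$ to contribute terms from the linear piece $x_N + \cdots + x_l$, absorbing the factor $1/(l-N+1)!$, while the remaining $N-1$ indices $\{1,\ldots,l\}\setminus S$ must contribute the variables $x_1,\ldots,x_{N-1}$ bijectively. Indexing the latter by the inverse injection $\tau : \{1,\ldots,N-1\} \hookrightarrow \{1,\ldots,l\}$ gives
\[
Z_1(\mu,\lambda) = \sum_{\tau : \{1,\ldots,N-1\} \hookrightarrow \{1,\ldots,l\}} \prod_{i=1}^{N-1} \frac{1}{\mu_i - \lambda_{\tau(i)}}.
\]
The main step will then be an induction on $r$ establishing
\[
(U_r \cdots U_1)(1) = \kappa^{r} \sum_{\tau : \{1,\ldots,r\} \hookrightarrow \{1,\ldots,l\}} \prod_{i=1}^{r} \frac{1}{\mu_i - \lambda_{\tau(i)}},
\]
whose case $r = N-1$ is the lemma.

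The base case $r=1$ is immediate from $D_{\mu_1}(-\kappa)\cdot 1 = 0$. For the inductive step I would apply $U_{r+1}$ to a single summand $f_\tau := \prod_{i=1}^r (\mu_i - \lambda_{\tau(i)})^{-1}$. The $\partial_{\mu_{r+1}}$ piece vanishes, and only exchanges $s_{r+1,j}$ with $j \leq r$ contribute nontrivially to the Dunkl operator; the partial-fraction identity
\[
\frac{1}{\mu_{r+1}-\mu_j}\left(\frac{1}{\mu_j - \lambda_{\tau(j)}} - \frac{1}{\mu_{r+1} - \lambda_{\tau(j)}}\right) = \frac{1}{(\mu_j - \lambda_{\tau(j)})(\mu_{r+1} - \lambda_{\tau(j)})}
\]
then gives $D_{\mu_{r+1}}(-\kappa) f_\tau = -\kappa f_\tau \sum_{j=1}^r (\mu_{r+1}-\lambda_{\tau(j)})^{-1}$, which exactly cancels the contributions to $\kappa V_{r+1} f_\tau$ whose new index lies in $\{\tau(1),\ldots,\tau(r)\}$. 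Only the injective extensions of $\tau$ survive, completing the induction. The only real subtlety is keeping the indices straight in the inductive step; no deeper input is required beyond the conjugation identity and the combinatorial description of $Z_1$.
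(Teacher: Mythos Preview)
Your proof is correct and follows essentially the same approach as the paper: both argue by induction on the number of Dunkl operators applied, both rewrite $Z_1(\mu,\lambda)$ as a sum over injections $\{1,\ldots,N-1\}\hookrightarrow\{1,\ldots,l\}$, and both use the same partial-fraction identity in the inductive step. The only cosmetic difference is that you first conjugate each $D_{\mu_i}(-\kappa)$ by $\Delta(\mu,\lambda)^\kappa$ to obtain $U_i$ and then apply $U_{N-1}\cdots U_1$ to the constant $1$, whereas the paper keeps the factor $\Delta(\mu,\lambda)^\kappa$ throughout and applies the unconjugated Dunkl operators directly; the computations are line-for-line equivalent.
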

\begin{proof}
We first claim that
\begin{equation} \label{eq:k1-ind}
\Delta(\mu, \lambda)^{-\kappa} D_{\mu_{a}}(- \kappa) \cdots D_{\mu_{1}}(- \kappa) \Delta(\mu, \lambda)^\kappa = \kappa^a \sum_{\substack{\sigma: \{1, \ldots, a\} \\ \phantom{===}\to \{1, \ldots, l\} \\ \sigma(i) \neq \sigma(j)}} \prod_{i = 1}^a (\mu_i - \lambda_{\sigma(i)})^{-1}.
\end{equation}
Taking $a = N - 1$ in (\ref{eq:k1-ind}) and expanding the product in the definition of $Z_1(\mu, \lambda)$ then completes the proof.  We prove (\ref{eq:k1-ind}) by induction on $a$.  The base case $a = 1$ holds because $D_{\mu_1}(- \kappa)$ acts by $\partial_1$ on the symmetric function $\Delta(\mu, \lambda)^\kappa$ in $\mu$.  For the inductive step, note that $D_{\mu_{a}}(- \kappa) \cdots D_{\mu_{1}}(- \kappa) \Delta(\mu, \lambda)^\kappa$ is symmetric in $\mu_{a + 1}, \ldots \mu_{N-1}$ by the inductive hypothesis.  Applying $D_{\mu_{a+1}}(-\kappa)$, we see that
\begin{multline*}
\Delta(\mu, \lambda)^\kappa D_{\mu_{a + 1}}(- \kappa) (D_{\mu_{a}}(- \kappa) \cdots D_{\mu_{1}}(- \kappa) \Delta(\mu, \lambda)^\kappa)\\ = \kappa^{a + 1} \sum_{j = 1}^l (\mu_{a + 1} - \lambda_j)^{-1}\!\!\!\!\!\!\!\!\!\!\! \sum_{\substack{\sigma: \{1, \ldots, a\}\\ \phantom{===} \to \{1, \ldots, l\} \\ \sigma(i) \neq \sigma(j)}}\!\!\!\!\!\! \prod_{i = 1}^a (\mu_i - \lambda_{\sigma(i)})^{-1} - \kappa^{a + 1} \!\!\!\!\!\! \sum_{\substack{\sigma: \{1, \ldots, a\}\\ \phantom{===} \to \{1, \ldots, l\} \\ \sigma(i) \neq \sigma(j)}}\!\!\!\!\!\! \prod_{i = 1}^a (\mu_i - \lambda_{\sigma(i)})^{-1} \sum_{i = 1}^a (\mu_{a + 1} - \lambda_{\sigma(i)})^{-1}\\
= \kappa^{a+1} \sum_{\substack{\sigma: \{1, \ldots, a + 1\} \\ \phantom{===}\to \{1, \ldots, l\} \\ \sigma(i) \neq \sigma(j)}} \prod_{i = 1}^{a+1} (\mu_i - \lambda_{\sigma(i)})^{-1},
\end{multline*}
where we repeatedly make use of the identity
\[
\frac{1}{\mu_{a + 1} - \mu_i} \Big((\mu_{a + 1} - \lambda_j) - (\mu_i - \lambda_j)\Big) = 1. \qedhere
\]
\end{proof}

\begin{proof}[Proof of Proposition \ref{prop:matrix-element}]
Replace $l$ by $kl$ and apply Lemma \ref{lemma:k1-matrix-element} with $\kappa = \frac{1}{k}$, $k$ copies of each $\lambda_j$, and $k(N - 1)$ different variables $\mu_1^1, \ldots, \mu_1^k, \ldots, \mu_{N - 1}^1, \ldots, \mu_{N-1}^k$.  We obtain
\begin{equation} \label{eq:me-inter}
\Delta(\{\mu_i^j\}, \{\lambda_i\})^{-1} D_{\mu_{N-1}^k}(-1/k) \cdots D_{\mu_1^1}(-1/k) \Delta(\{\mu_i^j\}, \{\lambda_i\}) = k^{-(N - 1)k} Z_1(\{\mu_i^j\}, \{\lambda_i^j\}).
\end{equation}
Now, make the specialization $\mu_1^1 = \cdots = \mu_1^k = \mu_1, \ldots, \mu_{N-1}^1 = \cdots = \mu_{N-1}^k = \mu_{N-1}$.  We first claim that
\[
Z_1(\{\mu_i^j\}, \{\lambda_i^j\}) = k!^{N-1} Z_k(\{\mu_i\}, \{\lambda_i\})
\]
under this specialization.  Indeed, we see that 
\begin{align*}
Z_1(\{\mu_i^j\}, \{\lambda_i^j\})& = \sum_{\substack{\sigma: \{1, \ldots, (N - 1)\} \times \{1, \ldots, k\} \\ \phantom{===} \to \{1, \ldots, l\} \times \{1, \ldots, k\} \\ \sigma(i_1, j_1) \neq \sigma(i_2, j_2)}} \prod_{i, j} (\mu_i^j - \lambda_{\sigma(i, j)_1}^{\sigma(i, j)_2})^{-1} \\
&= \sum_{\substack{\sigma^1, \ldots, \sigma^{N-1} \subset \{1, \ldots, l\} \times \{1, \ldots, k\} \\ |\sigma^i| = k \\ \sigma^i \cap \sigma^j = \emptyset}} k!^{N-1} \prod_i \prod_{(j, p) \in \sigma^i} (\mu_i - \lambda_j^p)^{-1}\\
&=k!^{N-1} \sum_{\substack{\sigma^1_1, \ldots, \sigma^1_l, \ldots, \sigma^{N-1}_1, \ldots, \sigma^{N-1}_l \\ \sum_j \sigma^i_j = k \\  \sum_i \sigma^i_j \leq k}} \prod_{i} \prod_j \binom{k}{\sigma^1_j, \ldots, \sigma^{N-1}_j} (\mu_i - \lambda_j)^{-\sigma_j^i},
\end{align*}
which is a direct expansion of $Z_k(\{\mu_i\}, \{\lambda_i\})$.  The conclusion will now follow from Lemma \ref{lemma:cm-eq}, which describes what occurs under specialization to the other side of Lemma \ref{lemma:k1-matrix-element}.  Indeed, applying Lemma \ref{lemma:cm-eq} for $p(y) = y_1^1 \cdots y_{N-1}^k$ to (\ref{eq:me-inter}), we see that 
\begin{align*}
Z_k(\{\mu_i\}, \{\lambda_i\}) &= k!^{-(N-1)} k^{(N-1)k} k^{- (N - 1)k} \Delta(\{\mu_i\}, \{\lambda_i\})^{-k} D_{\mu_{N-1}}(-k)^k \cdots D_{\mu_1}(-k)^k \Delta(\{\mu_i\}, \{\lambda_i\})^k\\
& = k!^{-(N-1)} \Delta(\{\mu_i\}, \{\lambda_i\})^{-k} D_{\mu_{N-1}}(-k)^k \cdots D_{\mu_1}(-k)^k \Delta(\{\mu_i\}, \{\lambda_i\})^k. \qedhere
\end{align*}
\end{proof}

\begin{lemma} \label{lemma:cm-eq}
Let $p \in \CC[y_1^1, \ldots, y_{N-1}^k]^{S_{k(N-1)}}$ be a symmetric polynomial. Then the map $\Res_k: \CC[\mu_i^j] \to \CC[\mu_i]$ given by $\mu_i^j \mapsto \mu_i$ satisfies 
\begin{multline*}
\Res_k \circ p(D_{\mu_1^1}(-k^{-1}), \ldots, D_{\mu_{N-1}^k}(-k^{-1}))\\
 = p\Big(\frac{1}{k} D_{\mu_1}(-k), \ldots, \frac{1}{k} D_{\mu_1}(-k), \ldots, \frac{1}{k} D_{\mu_{N-1}}(-k), \ldots, \frac{1}{k} D_{\mu_{N-1}}(-k)\Big) \circ \Res_k.
\end{multline*}
\end{lemma}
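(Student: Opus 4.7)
The plan is to reduce to verification on a generating set of symmetric polynomials and then compute directly. Since rational Dunkl operators commute, $p(D_{\mu_i^j}(-k^{-1}))$ is well-defined for every symmetric $p \in \CC[y_i^j]^{S_{k(N-1)}}$, and the analogous statement holds for the right-hand side. The claimed identity $\Res_k \circ p(D_{\mu_i^j}(-k^{-1})) = p(\tfrac{1}{k}D_{\mu_i}(-k),\ldots) \circ \Res_k$ is compatible with products: if it holds for $p_1$ and $p_2$, then it holds for $p_1 p_2$ via the intertwining chain
\[
\Res_k \circ p_1p_2(D) = p_1(\tfrac{1}{k}D) \circ \Res_k \circ p_2(D) = p_1(\tfrac{1}{k}D)\,p_2(\tfrac{1}{k}D) \circ \Res_k.
\]
Hence it suffices to verify the lemma on a set of generators of $\CC[y_i^j]^{S_{k(N-1)}}$, which I take to be the power sums $p_r(y) = \sum_{i,j}(y_i^j)^r$ for $r \geq 1$.

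For $p = p_r$ the identity reads
\[
\Res_k \circ \sum_{i,j} D_{\mu_i^j}(-k^{-1})^r \;=\; k^{1-r}\,\sum_i D_{\mu_i}(-k)^r \circ \Res_k.
\]
The base case $r = 1$ is straightforward: in both $\sum_{i,j}D_{\mu_i^j}(-k^{-1})$ and $\sum_i D_{\mu_i}(-k)$, the divided differences $(\mu_i^j - \mu_{i'}^{j'})^{-1}(1 - s_{(i,j),(i',j')})$ cancel in antisymmetric pairs under $(i,j) \leftrightarrow (i',j')$, so both sides collapse to $\sum \partial$; the identity is then the chain rule $\Res_k(\sum_{i,j} \partial_{\mu_i^j} f) = \sum_i \partial_{\mu_i} \Res_k(f)$. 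For $r \geq 2$ I would argue by induction in $r$, using the standard Dunkl commutation relations between $D_{\mu_i^j}(-k^{-1})$ and multiplication by $\mu_{i'}^{j'}$, which insert transpositions with coefficients of order $k^{-1}$. Expanding $\sum_{i,j} D_{\mu_i^j}^r$ by moving a coordinate factor past $D_{\mu_i^j}^{r-1}$ and sorting terms by $S_{k(N-1)}$-orbit type separates within-cluster and between-cluster contributions: after $\Res_k$, within-cluster transpositions fix $\Res_k(f)$ and contribute combinatorial factors of $k$ arising from summation over the $k$ choices within a cluster, while between-cluster transpositions combine into the divided-difference structure of $D_{\mu_i}(-k)^r$; the $k^{-1}$ factors from the coupling combine with these combinatorial $k$'s to produce the $k^{1-r}$ normalization on the right.

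The principal obstacle is the $r \geq 2$ step: iterated Dunkl operators generate intermediate expressions that are only partially symmetric, and one must show carefully that after $\Res_k$ the within-cluster and between-cluster pieces reassemble into the correct operators. A cleaner conceptual route would identify the lemma as an instance of a restriction functor for rational Cherednik algebras in the spirit of Bezrukavnikov-Etingof, applied to the spherical subalgebra of $H_{-k^{-1}}(S_{k(N-1)})$ at its parabolic subgroup $S_k^{N-1}$; under such a restriction the coupling is rescaled from $-k^{-1}$ to $-k$ and each Dunkl operator naturally picks up the $\tfrac{1}{k}$ normalization, directly yielding the right-hand side.
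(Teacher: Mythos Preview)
Your reduction to power sums and the verification for $p_1$ are correct and match the paper. The gap is exactly where you locate it: you do not carry out the $r \geq 2$ step, and the sketch you give---moving a coordinate past $D^{r-1}$ and sorting by orbit type---does not obviously close. The commutator $[D_{\mu_i^j}(-k^{-1}), \mu_{i'}^{j'}]$ produces transpositions rather than lower-order Dunkl powers, so there is no evident inductive descent in $r$, and the intermediate expressions are not $S_k^{N-1}$-invariant, which is precisely the difficulty you flag.

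The paper avoids this by a different mechanism. It verifies the identity \emph{only} for $p_2$, by direct computation of $\Res_k \circ \overline{L}_{p_2}(-1/k)$. The between-cluster piece reduces to the same chain-rule identity you used for $p_1$; the within-cluster piece is handled by evaluating $\sum_j \partial_{\mu_i^j}^2 - \tfrac{2}{k}\sum_{j_1<j_2}(\mu_i^{j_1}-\mu_i^{j_2})^{-1}(\partial_{\mu_i^{j_1}}-\partial_{\mu_i^{j_2}})$ on monomial symmetric functions in $\mu_i^1,\ldots,\mu_i^k$ and checking that after $\Res_k$ one obtains $\tfrac{1}{k}\partial_{\mu_i}^2$. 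This is a single finite calculation with no iteration.

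Once $p_2$ is known, all higher $p_r$ follow from the $\sl_2$-triple $\bigl(\tfrac{1}{2}p_2(y),\,-\tfrac{1}{2}p_2(x),\,\tfrac{1}{2}\sum(xy+yx)\bigr)$ inside the rational Cherednik algebra. Under this $\sl_2$, the multiplication operator $p_r(x)$ is a highest weight vector of weight $r$, so $\ad_{p_2(y)/2}^{\,r}\,p_r(x)$ is a fixed nonzero scalar multiple of $p_r(y)$, and the same relation holds on the $(N-1)$-variable side with the same scalar. Since $\Res_k$ trivially intertwines the multiplication operators $p_r(x)$ with their $(N-1)$-variable counterparts, and by the $p_2$ case intertwines $\ad_{p_2(y)}$ with $\ad_{p_2'(y)}$, it intertwines $p_r(y)$ with $p_r'(y)$ for every $r$. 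This replaces your proposed induction on $r$ with a single adjoint-action argument, and is the concrete realization of the Cherednik-algebra restriction you allude to at the end.
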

\begin{proof}
For any $c$ and $n$, let $H_{c, n}$ denote the rational Cherednik algebra associated to $S_n$ with parameter $c$, given in terms of generators and relations by 
\[
H_{c, n} := \left\langle x_1, \ldots, x_n, y_1, \ldots, y_n \mid [x_i, x_j] = [y_i, y_j] = 0, [y_i, x_i] = \delta_{ij} - c \sum_{j \neq i} s_{ij}, [y_i, x_j] = c s_{ij}\right\rangle.
\]
Let $H_{1/k, (N - 1)k}$ and $H_{k, N - 1}$ denote the rational Cherednik algebras of $S_{(N - 1)k}$ and $S_{N-1}$, respectively.  Within $H_{1/k, (N - 1)k}$ and $H_{k, N - 1}$, denote the power sums $p_a(x) = \sum_{i, j} (x_i^j)^a$ and $p_a'(x) = \sum_i x_i^a$, and define $p_a(y), p_a'(y)$ similarly.  Write $\Theta_{1/k, (N - 1)k}: H_{1/k, (N - 1)k} \to \End(\CC[\mu_i^j])$ and $\Theta_{k, N - 1}: H_{k, N - 1}\to \End(\CC[\mu_i])$ for the Dunkl embeddings induced by $\Theta_{1/k, (N - 1)k}(x_i^j) = \mu_i^j$, $\Theta_{1/k, (N - 1)k}(y_i^j) = D_{\mu_i^j}(-1/k)$, $\Theta_{k, N - 1}(x_i) = k x_i$, and $\Theta_{k, N - 1}(y_i) = \frac{1}{k} D_{\mu_i}(-k)$.  In this language, we wish to show that
\begin{equation} \label{eq:desired}
\Res_k \circ \Theta_{1/k, (N - 1)k}(p_a(y)) = \Theta_{k, N - 1}(p_a'(y)) \circ \Res_k.
\end{equation}
Suppose first that the statement held for $p_2(y)$. Then, we have for any $a$ that
\begin{equation} \label{eq:partial-ad}
\Res_k \circ \Theta_{1/k, (N - 1)k} (\ad_{p_2(y)}^a p_a(x)) = \Theta_{k, N - 1} (\ad_{p_2'(y)}^a p_a'(x)) \circ \Res_k
\end{equation}
Recall that for $h = \frac{1}{2} \sum_{i, j} (x_{i, j} y_{i, j} + y_{i, j} x_{i, j})$ and $h' = \frac{1}{2} \sum_i (x_i y_i + y_i x_i)$, the triples
\[
(f, e, h) = \Big(\frac{1}{2}p_2(y), - \frac{1}{2}p_2(x), h\Big) \qquad \text{ and } \qquad (f', e', h') = \Big(\frac{1}{2}p_2'(y), -\frac{1}{2} p_2'(x), h'\Big)
\]
are copies of $\sl_2$ inside $H_{1/k, (N - 1)k}$ and $H_{k, N - 1}$ corresponding to the $SL_2(\CC)$-actions given by 
\[
\left(\begin{matrix} a & b \\ c & d \end{matrix}\right) x_i = a x_i + b y_i, \qquad \left(\begin{matrix} a & b \\ c & d \end{matrix}\right) y_i = c x_i + d y_i,
\]
and similar formulas for $x_i^j$, $y_i^j$.  In particular, $p_a(x)$ and $p_a'(x)$ are highest weight vectors of weight $a$ for these representations, so $\ad_{p_2(y)/2}^a p_a(x)$ and $\ad_{p_2'(y)/2}^a p_a'(x)$ are the same fixed constant multiple of
\[
\left(\begin{matrix} 0 & 1 \\ -1 & 0 \end{matrix}\right)p_a(x) = p_a(y) \text{ and } \left(\begin{matrix} 0 & 1 \\ -1 & 0 \end{matrix}\right)p_a'(x) = p_a'(y),
\]
respectively. Combining with (\ref{eq:partial-ad}) and canceling common constant factors yields the desired relation (\ref{eq:desired}).  

It remains to check the statement for $p_2(y)$ directly. Observe that
\[
\Res_k \circ \sum_j \partial_{\mu_i^j} = \partial_{\mu_i} \circ \Res_k,
\]
which implies that
\begin{equation} \label{eq:sub1}
\Res_k\left(\sum_{j_1, j_2} \frac{\partial_{\mu_{i_1}^{j_1}} - \partial_{\mu_{i_2}^{j_2}}}{\mu_{i_1}^{j_1} - \mu_{i_2}^{j_2}} f\right) = k \frac{\partial_{\mu_{i_1}} - \partial_{\mu_{i_2}}}{\mu_{i_1} - \mu_{i_2}} \Res_k(f).
\end{equation}
For a partition $\tau$ with at most $k$ parts, let $m_\tau(\mu_i^j)$ be the monomial symmetric function in $\mu_i^1, \ldots, \mu_i^k$.  Then we see that 
\begin{align}
\Res_k&\left(\Big(\sum_j \partial_{\mu_i^j}^2 - \frac{2}{k} \sum_{j_1 < j_2} \frac{\partial_{\mu_i^{j_1}} - \partial_{\mu_i^{j_2}}}{\mu_i^{j_1} - \mu_i^{j_2}}\Big) m_\tau(\mu_i^j)\right) \nonumber \\
&= \left(\sum_j \tau_j(\tau_j - 1) - \frac{2}{k} \sum_{j_1 < j_2} \frac{1}{2}\Big(\tau_{j_1}(\tau_{j_1} - 1 - \tau_{j_2}) + \tau_{j_2}(\tau_{j_2} - 1 - \tau_{j_1})\Big)\right) k!\,\mu_i^{|\tau| - 2} \nonumber \\
&= \left(\frac{1}{k} \sum_i \tau_i (\tau_i - 1) + \frac{2}{k} \sum_{j_1 < j_2} \tau_{j_1} \tau_{j_2}\right)(\mu_i^j)^{-2} \Res_k(\mu_\lambda(\mu_i^j)) \nonumber \\
&= \frac{1}{k} |\tau|(|\tau| - 1)(\mu_i^j)^{-2} \Res_k(m_\tau(\mu_i^j)) \nonumber \\
&= \frac{1}{k} \partial_{\mu_i}^2 \Res_k(m_\tau(\mu_i^j)). \label{eq:sub2}
\end{align}
Combining (\ref{eq:sub1}) and (\ref{eq:sub2}), the statement for $p_2(y)$ follows by computing
\begin{align*}
\Res_k \circ \overline{L}_{p_2}(-1/k) &= \Res_k \circ \left( \sum_{i, j} \partial_{\mu_i^j}^2 - \frac{2}{k} \sum_{(i_1, j_1) < (i_2, j_2)} \frac{\partial_{\mu_{i_1}^{j_1}} - \partial_{\mu_{i_2}^{j_2}}}{\mu_{i_1}^{j_1} - \mu_{i_2}^{j_2}}\right) \\
&= \Res_k \circ \left( \sum_i \left( \sum_j \partial_{\mu_i^j}^2 - \frac{2}{k} \sum_{j_1 < j_2} \frac{\partial_{\mu_i^{j_1}} - \partial_{\mu_i^{j_2}}}{\mu_i^{j_1} - \mu_i^{j_2}}\right) - \frac{2}{k} \sum_{i_1 \neq i_2} \sum_{j_1, j_2} \frac{\partial_{\mu_{i_1}^{j_1}} - \partial_{\mu_{i_2}^{j_2}}}{\mu_{i_1}^{j_1} - \mu_{i_2}^{j_2}}\right)\\
&= \frac{1}{k} \left(\sum_i \partial_{\mu_i}^2 - 2k \sum_{i_1 \neq i_2} \frac{\partial_{\mu_{i_1}} - \partial_{\mu_{i_2}}}{\mu_{i_1} - \mu_{i_2}}\right) \circ \Res_k \\
&= \frac{1}{k} \overline{L}_{p_2}(-k) \circ \Res_k. \qedhere
\end{align*}
\end{proof}

\begin{remark}
Lemma \ref{lemma:cm-eq} may be extracted from \cite[Proposition 9.5(ii)]{CEE} on representations of the rational Cherednik algebras $H_{1/k, (N - 1)k}$ and $H_{k, N - 1}$.  We give a proof to keep the exposition self-contained.
\end{remark}

\bibliographystyle{alpha}
\bibliography{ho-final}
\end{document}